\documentclass[11pt]{elsarticle}

\usepackage{amssymb}
\usepackage{amsmath}
\usepackage{mathrsfs}

\usepackage[
top=1.8cm,
bottom=2cm,
left=2cm,
right=2cm
]{geometry}

\usepackage{hyperref}
\usepackage{dsfont}

\newtheorem{theorem}{Theorem}[section]
\newtheorem{lem}[theorem]{Lemma}
\newtheorem{cor}[theorem]{Corollary}
\newtheorem{prop}[theorem]{Proposition}
\newtheorem{defn}[theorem]{Definition}
\newtheorem{rmk}{Remark}

\newtheorem{exa}{Example}
\newenvironment{proof}{\paragraph{Proof:}}{\hfill$\square$}
\numberwithin{equation}{section}

\journal{********}

\begin{document}

\begin{frontmatter}

%% Title, authors and addresses

%% use the tnoteref command within \title for footnotes;
%% use the tnotetext command for theassociated footnote;
%% use the fnref command within \author or \affiliation for footnotes;
%% use the fntext command for theassociated footnote;
%% use the corref command within \author for corresponding author footnotes;
%% use the cortext command for theassociated footnote;
%% use the ead command for the email address,
%% and the form \ead[url] for the home page:
%% \title{Title\tnoteref{label1}}
%% \tnotetext[label1]{}
%% \author{Name\corref{cor1}\fnref{label2}}
%% \ead{email address}
%% \ead[url]{home page}
%% \fntext[label2]{}
%% \cortext[cor1]{}
%% \affiliation{organization={},
%%            addressline={}, 
%%            city={},
%%            postcode={}, 
%%            state={},
%%            country={}}
%% \fntext[label3]{}

\title{Averaged Controllability of Time-Fractional Schrödinger Equations with Random Quantum Diffusivity} %% Article title

%% use optional labels to link authors explicitly to addresses:
%% \author[label1,label2]{}
%% \affiliation[label1]{organization={},
%%             addressline={},
%%             city={},
%%             postcode={},
%%             state={},
%%             country={}}
%%
%% \affiliation[label2]{organization={},
%%             addressline={},
%%             city={},
%%             postcode={},
%%             state={},
%%             country={}}

\author{Jon Asier Bárcena-Petisco$^a$}
\ead{jonasier.barcena@ehu.eus}
\author{Salah-Eddine Chorfi$^b$} %% Author name
\ead{s.chorfi@uca.ac.ma}
\author{Fouad Et-tahri$^{*,c}$}
\ead{fouad.et-tahri@edu.uiz.ac.ma}
\author{Lahcen Maniar$^{b,d}$}
\ead{maniar@uca.ac.ma}

%% Author affiliation
\affiliation{organization={Department of Mathematics, University of the Basque Country UPV/EHU},%Department and Organization
	addressline={Barrio Sarriena s/n}, 
	city={Leioa},
	postcode={48940},
	country={Spain}}
\affiliation{organization={Cadi Ayyad University, UCA, Faculty of Sciences Semlalia, Laboratory of Mathematics, Modeling and Automatic Systems},%Department and Organization
            addressline={B.P. 2390}, 
            city={Marrakesh},
            postcode={40000},
            country={Morocco}}
\affiliation{organization={$^*$ Corresponding author. Faculty of Sciences-Agadir, Lab-SIV, Ibn Zohr University},%Department and Organization
            addressline={B.P. 8106}, 
            city={Agadir},
            postcode={80000},
            country={Morocco}}
\affiliation{organization={The UM6P Vanguard Center, Mohammed VI Polytechnic University},%Department and Organization
            addressline={Hay Moulay Rachid}, 
            city={Ben Guerir},
            postcode={43150},
            country={Morocco}}

%% Abstract
\begin{abstract}
%% Text of abstract
This paper addresses the problem of averaged controllability for the time-fractional Schrödinger equation, where the quantum diffusivity parameter is a random variable with a general probability distribution. First, using the analyticity of the Mittag–Leffler function and Müntz’s theorem, we show that the simultaneous null controllability of the system can hold only for a countable set of realizations of the random diffusivity. In particular, this implies the lack of simultaneous null controllability for absolutely continuous random diffusivity. Then, we establish the lack of exact averaged controllability for absolutely continuous random variables, regardless of the control time. Furthermore, we introduce a new two-parameter fractional characteristic function, allowing us to design a class of random variables satisfying the null averaged controllability at any time from any arbitrary sensor set of positive Lebesgue measure. This can be done using an open-loop control that belongs to $L^\infty$ and is independent of the random parameter. In particular, we obtain the null controllability of the fractional biharmonic diffusion equation. Finally, we conclude with some comments and open problems that deserve future investigation.
\end{abstract}

%%Research highlights
%\begin{highlights}
%	
%\item Addresses averaged controllability for time-fractional Schrödinger equations.
%
%\item Considers quantum diffusivity as a random variable with general distribution.
%
%\item Shows lack of simultaneous controllability for general quantum diffusivity.
%
%\item Introduces a new two-parameter fractional characteristic function for control design.
%
%\item Shows lack of exact averaged controllability for random variables whose fractional characteristic function vanishes at infinity, in particular for absolutely continuous variables.
%
%
%\item Achieves null averaged controllability using open-loop $L^\infty$ controls for suitable random variables.
%\end{highlights}

%% Keywords
\begin{keyword}
%% keywords here, in the form: keyword \sep keyword
Fractional Schrödinger equation \sep Random diffusivity \sep Simultaneous and averaged controllability \sep Fractional characteristic function

%% PACS codes here, in the form: \PACS code \sep code

\MSC 35J10 \sep 35R11 \sep 35R60 \sep 93B05 \sep 93C20
%% or \MSC[2008] code \sep code (2000 is the default)

\end{keyword}

\end{frontmatter}

%% Add \usepackage{lineno} before \begin{document} and uncomment 
%% following line to enable line numbers
%% \linenumbers

%% main text
%%

%% Use \section commands to start a section
\section{Introduction}
The fractional generalizations of the Schrödinger equation, incorporating time-fractional derivatives, have recently garnered significant interest due to their ability to capture anomalous propagation and memory effects in quantum systems. In \cite{naber2004}, Naber initially derived time-fractional Schrödinger equations by transforming the time-fractional diffusion equation. Alternatively, Achar et al. in \cite{achar2013} obtained the system \eqref{sys:frac Schr} (with constant diffusivity) through the Feynman path integral approach. These fractional models have found applications across a wide spectrum of fields, including quantum mechanics, optics and photonics, condensed matter, and plasma physics; see \cite{laskin2018, caicedo2024} and the references therein. As for inverse problems associated with time-fractional Schrödinger equations, we refer to the recent works \cite{chorfi2025forward} and \cite{chorfi2025Inverse}.

In this paper, we investigate the averaged controllability properties of the time-fractional Schrödinger equation, which is governed by the following equation:
\begin{equation}\label{sys:frac Schr}  (\mathcal{P}_{\xi}) \quad
	\begin{cases}
		\partial^{\alpha}_{0,t} y -\xi \mathrm{i} \Delta y=\mathds{1}_{G_0} u,& \mbox{ on }Q_T,\\
		y=0, & \mbox{ on }\Sigma_T,\\
		y(0,\cdot)=y_0, & \mbox{ in } G.
	\end{cases}
\end{equation}
Here $\partial^{\alpha}_{0,t} y$ denotes the Caputo fractional derivative of order $\alpha\in (0,1)$, which is defined for a suitable function $y$ by
\begin{equation}\label{cap01}
	\partial^{\alpha}_{0,t} y(t)=\frac{1}{\Gamma(1-\alpha)} \int_0^t(t-\tau)^{-\alpha}\partial_\tau y(\tau)\, \mathrm{d} \tau,
\end{equation}
where $\Gamma$ denotes the standard Euler Gamma function.

Above, $T>0$ is a fixed time horizon, $G \subset \mathbb{R}^{d}$ ($d\geq 1$) is a Lipschitz domain with boundary $\partial G$, $Q_T := (0,T)\times G$, $\Sigma_T := (0,T)\times \partial G$, $G_0\subset G$ is a control subset of positive Lebesgue measure, $\mathds{1}_{G_0}$ its indicator function of $G_0$, $\mathrm{i}$ stands for the imaginary unit satisfying $\mathrm{i}^2=-1$, and 
$\xi =\xi (\omega)$ is a real random variable which is defined on a probability space $(\Omega,\mathcal{F},\mathbb{P})$ and naturally induces a probability measure $\mu_{\xi }$ in $\mathbb{R}$, $u=u(t,x)\in L^{\infty}((0,T)\times G)$ is the control, and $y_0\in L^{2}(G)$ is an initial datum which are independent of the random parameter $\omega$. Moreover, we denote by $\{\lambda_n\}_{n\in\mathbb{N}}$ the eigenvalues of the Dirichlet Laplacian $-\Delta$, and by $\{e_n\}_{n\in\mathbb{N}}$ a corresponding Hilbert basis of eigenfunctions in $L^2(G)$. Note that $\lambda_0>0$, and the sequence $\{\lambda_n\}_{n\in\mathbb{N}}$ is non-decreasing and tends to infinity. 

For $\mathbb{P}$-almost every $\omega\in\Omega$ and all $u$ and $y_0$, the solution $y=y(t,x;\xi (\omega); y_0; u)$ of $(\mathcal{P}_{\xi (\omega)})$ is given by the fractional generalization of Duhamel's Principle:
\[y(t,\cdot;\xi (\omega); y_0; u)=y(t,\cdot;\xi (\omega); y_0; 0)+ y(t,\cdot;\xi (\omega);0; u),\]
where
\begin{align*}
	y(t,\cdot;\xi (\omega); y_0; 0)=&\sum_{n=0}^{\infty}\left\langle y_0, e_n\right\rangle E_{\alpha, 1}\left(-\mathrm{i}\xi(\omega)\lambda_n t^\alpha\right) e_n,\\
	y(t,\cdot;\xi (\omega);0; u)=& \sum_{n=0}^{\infty}\int_{0}^{t}\left\langle \mathds{1}_{G_0}u(s,\cdot), e_n\right\rangle (t-s)^{\alpha-1}E_{\alpha,\alpha}(-\mathrm{i}\xi(\omega)\lambda_n(t-s)^{\alpha}))\,\mathrm{d} s\, e_n
\end{align*}
and the two-parameter Mittag--Leffler function is defined by
\begin{equation}\label{MLdef}
	E_{\alpha,\beta}(z)=\sum_{n=0}^{\infty}\frac{z^n}{\Gamma(\alpha n+\beta)} \qquad \forall z\in\mathbb{C},
\end{equation}
which is an entire function generalizing the exponential function; see, for instance, \cite{gorenflo2020} and \cite{naber2004}.

The ideal scenario is to achieve simultaneous controllability for $\mathbb{P}$-almost every $\omega \in \Omega$. Nevertheless, as established in our first main result (Theorem \ref{thm:nonsimcon}), this is impossible when the random variable is absolutely continuous. More precisely, we prove that the preimage of $0$ under the mapping $\xi \mapsto y(T,\cdot;\xi;y_0;u)$ is countable for every $y_0 \in L^2(G)\setminus\{0\}$ and $u \in L^\infty((0,T)\times G_0)$. Consequently, the event “the system \eqref{sys:frac Schr} is simultaneously null controllable” is negligible for absolutely continuous random variables, that is,
$$\mathbb{P}\left[\omega\;:\; y(T,\cdot;\xi (\omega);y_0;u)=0\right]=0.$$

To elucidate how averaging can influence the dynamics of the original system, we examine the uncontrolled version of \eqref{sys:frac Schr} (i.e., $u=0$), where $\xi$ follows the \textbf{Rademacher distribution} defined by
\[
\mathbb{P}(\xi = -1) = \mathbb{P}(\xi = 1) = \frac{1}{2}.
\]
In this case, the mathematical expectation (i.e., the averaged state) of the system
\begin{align*}
	\mathbb{E}(y(t,\cdot;\xi; y_0; 0))=&\frac{1}{2}(y(t,\cdot;-1; y_0; 0)+y(t,\cdot;1; y_0; 0))\\
	=& \frac{1}{2}\sum_{n=0}^{\infty}\left\langle y_0, e_n\right\rangle (E_{\alpha, 1}\left(-\mathrm{i}\lambda_n t^\alpha\right)+E_{\alpha, 1}\left(\mathrm{i}\lambda_n t^\alpha\right)) e_n\\
	=& \sum_{n=0}^{\infty}\left\langle y_0, e_n\right\rangle E_{2\alpha, 1}\left(-\lambda_n^2 t^{2\alpha}\right) e_n,
\end{align*}
which follows from the identity below \eqref{MLi}. Then, the averaged state solves the following fractional biharmonic diffusion-wave equation
\begin{equation*} 
	\begin{cases}
		\partial^{2\alpha}_{0,t} \tilde{y} + \Delta^2 \tilde{y}=0,& \mbox{ on }Q_T,\\
		\Delta\tilde{y}=\tilde{y}=0, & \mbox{ on }\Sigma_T,\\
		\tilde{y}(0,\cdot)=y_0, & \mbox{ in } G,\\
		\tilde{y}_{t}(0,\cdot)=0,\; (\text{if}\; 1<2\alpha<2) & \mbox{ in } G.
	\end{cases}
\end{equation*}
Thus, the fact that averages of Schrödinger-like equations can exhibit improved regularity highlights the importance of this topic.

In the example above, the regularizing effect of diffusion already prevents exact averaged controllability, even if the diffusivity is governed by a discrete random variable. This example led us to introduce a new fractional characteristic function for general random variables (see Definition \ref{df:FCF}). 
Within this framework, we further show that exact averaged controllability also fails for random diffusivities whose fractional characteristic function vanishes at infinity (see Theorem \ref{tm:noexcon}). In particular, this holds for absolutely continuous random diffusivities, relying on a fractional variant of the Riemann--Lebesgue lemma (see Corollary \ref{cor:noexc}).

This raises the question of null averaged controllability, which is therefore of significant interest. In this context, the fractional characteristic function allows us to identify a class of random variables for which null averaged controllability holds, as proved in the main Theorem \ref{thm:nulaver}. The main difficulty is to prove null controllability of the averaged state without explicit knowledge of its dynamics. The identified class assumes exponential decay of the fractional characteristic function and includes for instance the Rademacher distribution. As a result, the fractional biharmonic diffusion system described above is null controllable. 

For the proof, we develop a strategy based on the Fourier series decomposition of the averaged state together with the exponential decay of the fractional characteristic function and a suitable spectral inequality. This allows us to establish an interpolation estimate, and then, using a telescoping argument, we deduce an $L^1$-observability inequality of a suitable adjoint system, which we first show to be equivalent to the null averaged controllability of the original system. 

We would like to remark that in this paper we focus on controls belonging to $L^\infty$, as they better reflect the underlying physical interpretation compared to controls in $L^2$. However, note that a control in $L^\infty$ belongs to $L^2$, so this method would also work if we look for controls in $L^2$, as it is usually done.

The structure of the rest of the paper is as follows. In the rest of the section, we present the state of the art and the notation.
In Section \ref{Sec2}, we introduce the key concepts on fractional calculus and average controllability as well as their primary characterizations. In Sections \ref{Sec3} and \ref{Sec4}, we establish the lack of simultaneous controllability as well as exact averaged controllability for general absolutely continuous random variables. Section \ref{Sec5} will be devoted to establishing the null averaged controllability of system \eqref{sys:frac Schr} for a certain class of random variables. Section \ref{Sec6} presents some concluding remarks and open questions that deserve further investigation.

\subsection*{State of the art on averaged controllability}
The study of averaged controllability and simultaneous controllability with abstract formulations originates in \cite{zuazua2014averaged} and \cite{loheac2016} for finite-dimensional systems. An extension of these concepts to PDEs appeared in \cite{lu2016averaged}, where the authors investigated the control of averaged states for the transport, heat, and Schrödinger equations under various probabilistic frameworks. In particular, for the Schrödinger equation, they considered diffusivities following uniform, exponential, normal, Laplace, chi-square, and Cauchy distributions. The case of the continuously averaged heat equation was subsequently analyzed in \cite{zuazua2016stable}.  Further contributions, including \cite{lazar2018stability} and \cite{loheac2017averaged}, addressed perturbations of the probability density function modeled by Dirac measures. The works \cite{barcena2021averaged} and \cite{coulson2019average} investigated the controllability of the heat equation with random diffusivity, demonstrating that certain probability laws give rise to fractional dynamics. In \cite{abdelli2021}, the authors proposed a numerical method to approximate the exact averaged boundary control of a family of wave equations by projecting the control problem in the finite-dimensional space formed by the first eigenfunctions.

In addition, numerous results concerning lower-order random perturbations have been reported; see the monograph \cite{lu2021mathematical} and the survey \cite{lu2022concise}, along with more recent developments on nonlinear parabolic stochastic equations, such as those in \cite{hernandez2022statistical} and \cite{hernandez2023global}.

Recently, the authors of \cite{boutaayamou2025} have investigated the controllability of finite-dimensional systems governed by fractional-order dynamics while considering uncertain parameters. They have also characterized the averaged controllability by means of an average Kalman rank condition as well as an average Gramian matrix. Moreover, the paper \cite{barcena2025} has proved that the average solutions to a random Schrödinger equation with absolutely continuous diffusivity can be driven to zero using deterministic controls. The authors have also proven the lack of average exact controllability, as well as the failure of simultaneous null controllability, except for finite realizations of the random diffusivity. In the present paper, we mainly aim to extend existing results to the time-fractional setting. Such extensions require new techniques and ideas involving the Mittag--Leffler functions with the imaginary argument, which present several challenges. We refer to \cite{xue2017observability} for an observability inequality associated with fractional diffusion equations with constant diffusion (the analogue of the heat equation), and to \cite{fujishiro2014} for approximate controllability for fractional diffusion equations by interior control. Finally, we should emphasize that the literature is scarce regarding observability and controllability for time-fractional PDEs, specially if we compare to amount of results with fractional derivative in the space variable (see, for example,  \cite{biccari2019controllability,biccari2025boundary,koenig2020lack,micu2006controllability}).

\subsection*{Notational setting}
Here, we introduce standard notation and conventional abbreviations from probability theory for future use.

We denote by $\mathbb{N}=\{0,1,2, \cdots\}$ the set of natural integers. We denote by $|\mathscr{A}|$ the Lebesgue measure of a measurable set $\mathscr{A}\subset \mathbb{R}^d$ ($d\geq 1$). The notation $B(x,r)$ stands for the open ball centered at $x\in\mathbb{R}^d$ with radius $r>0$. We consider the Hilbert space $L^2(G) := L^{2}(G;\mathbb{C})$ endowed with its usual Hermitian inner product $\left\langle\cdot,\cdot\right\rangle_{L^2(G)}$, or $\left\langle\cdot,\cdot\right\rangle$ if there is no ambiguity, and $\|\cdot\|_{L^2(G)}$ denotes its associated norm. The space of entire functions will be denoted by \(\mathcal{H}(\mathbb{C})\). For $T>0$ and a given Hilbert space $(H,\langle\cdot,\cdot\rangle_H)$, the spaces $L^p(0,T;H)$ $(1\le p\le\infty)$ and $C([0,T];H)$ denote the standard function spaces, whereas $W^{1,1}(0,T;H)$ designates the space of absolutely continuous $H$-valued functions on $[0,T]$.

Next, we consider probability space $(\Omega,\mathcal{F},\mathbb{P})$. An $L^2(G)$-valued Borel random variable is a function from $\Omega$ to $L^2(G)$ that is $(\mathcal{F},\mathcal{B}(L^2(G)))$-measurable, where $\mathcal{B}(L^2(G))$ denotes the Borel $\sigma$-algebra on $L^2(G)$. A real Borel random variable will shortly be called a real random variable. We consider a real random variable $\xi\colon \Omega \to \mathbb{R}$ representing the quantum diffusivity. The probability distribution of $\xi$, defined by $\mu_{\xi } := \mathbb{P}\circ \xi ^{-1}$, will be called its distribution. The Probability Density Function of an absolutely continuous random variable will shortly be denoted by \textbf{PDF}.

We also recall that the distribution of $\xi$ is absolutely continuous (with respect to the Lebesgue measure) if there is a nonnegative density $\rho_{\xi }\in L^1(\mathbb R)$ such that for all Borel measurable set $\mathscr{A}\subset\mathbb R$,
$$\mathbb \mu_{\xi }[\mathscr{A}]:= \mathbb{P}[\xi \in \mathscr{A}]=\int_\mathscr{A} \rho_{\xi }(x)\mathrm{d}x.$$
In this case, we simply say that $\xi$ is absolutely continuous.

For a continuous function $F:\mathbb{R}\to L^2(G)$, $F\circ\xi $ is an $L^2(G)$-valued Borel random variable whose expectation (whenever it exists) is given by the formula:
\begin{equation*}
	\mathbb{E}(F\circ\xi ):=\int_{\Omega}F(\xi (\omega))\,\mathrm{d}\mathbb{P}(\omega) =\int_{-\infty}^{\infty}F(\xi)\,\mathrm{d}\mu_{\xi }(\xi).
\end{equation*}
Finally, the notation $\sim$ denotes asymptotic equivalence, while the symbol $\mathcal{O}$ indicates that the ratio of two quantities remains bounded in the considered asymptotic limit.
\section{Miscellanea on fractional calculus and averaged control} \label{Sec2}
\subsection{Fractional calculus} \label{FDI}
We set
$$\omega_{\alpha}(t)=\begin{cases}
	\dfrac{1}{\Gamma(\alpha)t^{1-\alpha}} \quad &t>0,\\
	0 & t\leq 0,
\end{cases}$$
where $\alpha>0$ and $\Gamma$ is the standard Euler function.

The left and right Riemann–Liouville fractional integrals of order $\alpha>0$ are respectively given 
by the convolution with $\omega_\alpha \in L^1(0,T)$, namely:
\begin{align*}
	(I^{\alpha}_{0,t}g)(t)&:=(\omega_{\alpha}\ast g)(t) =\int_{0}^{t}\omega_{\alpha}(t-s)g(s)\mathrm{d}s, \\
	(I^{\alpha}_{t,T}g)(t)&:=(\omega_{\alpha}\ast g(T-\cdot))(T-t) =\int_{t}^{T}\omega_{\alpha}(s-t)g(s)\mathrm{d}s,
\end{align*}
whenever the right-hand side is well-defined. Invoking the properties of the convolution product, we deduce that \cite{jin2021fractional}:
\begin{lem}\label{Continuity of fractional integrals}
	Let $\alpha>0$ and $1\le p\le \infty$. Then $I^{\alpha}_{0,t}$ acts continuously on $L^{p}(0,T;H)$, and we have
	\begin{equation*}
		\|I^{\alpha}_{0,t}g\|_{L^{p}(0,T;H)}\leq \frac{T^{\alpha}}{\Gamma(\alpha+1)}\|g\|_{L^{p}(0,T;H)}.
	\end{equation*}
\end{lem}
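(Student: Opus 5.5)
The bound follows from Young's convolution inequality for Bochner integrals, with the extension by zero and the kernel's $L^1$ norm handled explicitly. Fix $g\in L^p(0,T;H)$ and extend it by zero outside $(0,T)$; call the extension $\tilde g$. For $t\in(0,T)$ we have
\[
(I^{\alpha}_{0,t}g)(t)=\int_{\mathbb R}\omega_\alpha(t-s)\tilde g(s)\,\mathrm{d}s=\int_{\mathbb R}\omega^T_\alpha(t-s)\tilde g(s)\,\mathrm{d}s,
\]
where $\omega^T_\alpha:=\omega_\alpha\mathds{1}_{(0,T)}$. The truncation is harmless because $t-s\in(0,T)$ whenever $s,t\in(0,T)$ and $s<t$. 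The truncated kernel satisfies
\[
\|\omega^T_\alpha\|_{L^1(\mathbb R)}=\int_0^T\frac{t^{\alpha-1}}{\Gamma(\alpha)}\,\mathrm{d}t=\frac{T^\alpha}{\alpha\Gamma(\alpha)}=\frac{T^\alpha}{\Gamma(\alpha+1)}.
\]
Since $\alpha>0$, this is finite, and the constant is exactly the one in the statement.

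The key step is Minkowski's integral inequality for the vector-valued integral. First, measurability: $(t,s)\mapsto\omega^T_\alpha(t-s)\tilde g(s)$ is strongly measurable on $\mathbb R^2$ because $g$ is strongly measurable. Its Bochner integral in $s$ then exists for a.e.\ $t$, which follows once the bound below is established, applied to $\|\tilde g(\cdot)\|_H$. Next, the pointwise estimate
\[
\|(I^\alpha_{0,t}g)(t)\|_H\le\int_{\mathbb R}\omega^T_\alpha(t-s)\,\|\tilde g(s)\|_H\,\mathrm{d}s=(\omega^T_\alpha\ast\|\tilde g\|_H)(t)
\]
reduces the problem to the scalar case. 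The scalar Young inequality $\|k\ast f\|_{L^p}\le\|k\|_{L^1}\|f\|_{L^p}$ then gives the claim.

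For completeness, I would prove the scalar Young step directly rather than cite it. For $p=\infty$ it is the immediate sup bound. For $p=1$ it follows from Tonelli. For $1<p<\infty$, apply Hölder to $k^{1/p'}\cdot k^{1/p}|f|$ to get $|k\ast f|^p\le\|k\|_{L^1}^{p-1}\,(k\ast|f|^p)$, then integrate and use Tonelli. Restricting the resulting norm to $(0,T)$ only decreases it.

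The only point needing care is that the argument is done in $H$-valued (Bochner) spaces rather than scalar ones. The main potential obstacle is therefore the measurability and a.e.\ existence of $I^\alpha_{0,t}g$, which the scalar domination settles. Linearity of $I^\alpha_{0,t}$ combined with this bound yields continuity on $L^p(0,T;H)$.
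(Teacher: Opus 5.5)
Your proposal is correct and is essentially the paper's own argument. The paper simply notes that $I^{\alpha}_{0,t}g=\omega_\alpha\ast g$ with $\|\omega_\alpha\|_{L^1(0,T)}=T^{\alpha}/\Gamma(\alpha+1)$ and appeals to the properties of convolution, i.e.\ Young's inequality, citing Jin's book; your zero extension, kernel truncation, reduction to the scalar case via $\|(I^{\alpha}_{0,t}g)(t)\|_H\le(\omega^T_\alpha\ast\|\tilde g\|_H)(t)$, and proof of scalar Young fill in exactly those details, with measurability in $t$ following from the joint strong measurability you note plus Fubini for Bochner integrals.
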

The same holds for the right integral $I^{\alpha}_{t,T}$.

In the following, we consider a dense subspace $V \subset H$ continuously 
embedded into $H$. We denote by $V^{\prime}$ the dual of $V$ with respect to the pivot space $H$, and by $\langle\cdot,\cdot\rangle_{V^\prime, V}$ the duality pairing 
between $V^\prime$ and $V$. We recall that
\[ \langle f,g \rangle_{V^\prime, V}=\langle f,g\rangle_H,\quad f\in H,\; g\in V. \]
We will use the following fractional integration by parts formulas; see \cite{jin2021fractional}.
\begin{lem}\label{Fractional integration by parts}
	Let $f\in L^p(0,T;V^{\prime})$ and $g\in L^q(0,T;V)$ with $p,q\ge 1$ and $\alpha>0$ satisfying $\frac{1}{p}+\frac{1}{q}\le 1+\alpha$. 
	Then, the following identity holds:
	\begin{equation*}
		\int_{0}^{T} \langle (I^{\alpha}_{0,t}f)(t), g(t)\rangle_{V^{\prime},V} \mathrm{d} t=\int_{0}^{T}\langle f(t), (I^{\alpha}_{t,T}g)(t)\rangle_{V^{\prime},V} \mathrm{d} t.
	\end{equation*}
\end{lem}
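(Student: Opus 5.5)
The plan is to reduce the vector-valued identity to a scalar one via Fubini's theorem, first for smooth functions, and then extend it by density. The duality pairing is a bounded bilinear form on $V'\times V$, and $I^{\alpha}_{0,t}$ is linear. Formally we would write
\begin{align*}
\int_0^T \langle (I^{\alpha}_{0,t}f)(t), g(t)\rangle_{V',V}\,\mathrm{d}t
&=\int_0^T\int_0^t \omega_\alpha(t-s)\langle f(s),g(t)\rangle_{V',V}\,\mathrm{d}s\,\mathrm{d}t\\
&=\int_0^T\Big\langle f(s), \int_s^T\omega_\alpha(t-s)g(t)\,\mathrm{d}t\Big\rangle_{V',V}\mathrm{d}s ,
\end{align*}
and the last expression is exactly $\int_0^T\langle f(s),(I^{\alpha}_{t,T}g)(s)\rangle_{V',V}\,\mathrm{d}s$. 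Two justifications are needed here. Moving the pairing inside the Bochner integral is legitimate because $\langle\cdot,g(t)\rangle$ is a continuous linear functional on $V'$. Exchanging the order of integration requires absolute integrability of $(s,t)\mapsto \omega_\alpha(t-s)\|f(s)\|_{V'}\|g(t)\|_V$ on the triangle $0<s<t<T$.

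The main point is therefore the estimate
\[
\int_0^T\!\!\int_0^T \omega_\alpha(t-s)\,F(s)\,G(t)\,\mathrm{d}s\,\mathrm{d}t<\infty,
\qquad F=\|f\|_{V'}\in L^p(0,T),\quad G=\|g\|_V\in L^q(0,T),
\]
under the hypothesis $\tfrac1p+\tfrac1q\le 1+\alpha$. There are two cases.

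\emph{Case $\tfrac1p+\tfrac1q\le 1$.} Lemma \ref{Continuity of fractional integrals} gives $\omega_\alpha\ast F\in L^p$. Since $L^p\subset L^{q'}$ on the bounded interval $(0,T)$, Hölder's inequality finishes the argument.

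\emph{Case $1<\tfrac1p+\tfrac1q\le 1+\alpha$.} Here the plan is to use the Hardy--Littlewood--Sobolev mapping property of the Riemann--Liouville integral, $I^\alpha: L^p\to L^r$ with $\tfrac1r=\tfrac1p-\alpha$ when $p<1/\alpha$. The condition $\tfrac1p+\tfrac1q\le1+\alpha$ means exactly that $\tfrac1r\le 1-\tfrac1q=\tfrac1{q'}$, which on a bounded interval gives $L^r\subset L^{q'}$, and Hölder's inequality again concludes. If $p\ge 1/\alpha$, then $I^\alpha F$ lies in every $L^r$ with $r<\infty$ (or in $L^\infty$ when $p>1/\alpha$), and the conclusion is easier. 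The endpoint cases $p=1$ or $q=1$, where HLS fails, have to be handled separately by Young's inequality. For instance, $\omega_\alpha\in L^{s}$ for all $s<\tfrac1{1-\alpha}$, which covers the strict inequality. The borderline equality with $p=1$ or $q=1$ could be the delicate spot; if it turns out not to be covered, I would only use strict inequality there, or restrict to the range where HLS applies.

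Once absolute integrability is established, Fubini's theorem applies directly to the Bochner-integrable functions, and the identity holds without any density argument. I expect the obstacle to be exactly the bookkeeping of exponents in the second case.
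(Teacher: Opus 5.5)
Your route is the classical proof of this identity. You reduce to a scalar Fubini argument and justify absolute integrability of $\omega_\alpha(t-s)\|f(s)\|_{V'}\|g(t)\|_V$ in three ways: by Lemma~\ref{Continuity of fractional integrals} plus H\"older when $\frac1p+\frac1q\le 1$; by Young's inequality when the inequality is strict, including $p=1$ or $q=1$; and by Hardy--Littlewood--Sobolev in the equality case with $p,q>1$. For $\alpha\ge 1$ the kernel is bounded and nothing is needed. The paper gives no proof of its own, only the citation to \cite{jin2021fractional}. Your exponent bookkeeping is correct in all these cases.

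The one case you leave open cannot be closed, because the statement is false there. That case is $0<\alpha<1$, $\frac1p+\frac1q=1+\alpha$, with $p=1$ or $q=1$. Take $q=1$, $p=1/\alpha$, $t_0\in(0,T)$, $0<\varepsilon<\min\{t_0,1\}$, and
\[
F(s)=(t_0-s)^{-\alpha}\,|\log(t_0-s)|^{-1}\,\mathds{1}_{(t_0-\varepsilon,t_0)}(s).
\]
Then $\|F\|_{L^{1/\alpha}(0,T)}^{1/\alpha}=\int_0^\varepsilon u^{-1}|\log u|^{-1/\alpha}\,\mathrm{d}u<\infty$. However, as $t\downarrow t_0$, monotone convergence gives
\[
(I^{\alpha}_{0,t}F)(t)\uparrow \frac{1}{\Gamma(\alpha)}\int_0^\varepsilon \frac{\mathrm{d}u}{u\,|\log u|}=+\infty .
\]
Choose disjoint intervals $J_k\subset(t_0,T)$ on which $I^\alpha_{0,t}F\ge 2^k$, and set $G=\sum_k 2^{-k}|J_k|^{-1}\mathds{1}_{J_k}\in L^1(0,T)$. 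With $f=Fh$ and $g=Gh$ for a fixed $h\in V$ with $\|h\|_H=1$, the integrand $\langle (I^\alpha_{0,t}f)(t),g(t)\rangle_{V',V}=G(t)\,(I^\alpha_{0,t}F)(t)\ge 0$ has integral $+\infty$. The reflected example handles $p=1$, $q=1/\alpha$.

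So the identity cannot hold as an equality of Lebesgue integrals. The lemma needs the proviso carried by the classical statement (Samko--Kilbas--Marichev; Kilbas--Srivastava--Trujillo): $p\neq 1$ and $q\neq 1$ when $\frac1p+\frac1q=1+\alpha$. Your fallback of ``using only strict inequality there'' is the right fix, but it must be stated as a hypothesis, not left as a possibility.

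This is not an idle technicality. In the proof of \eqref{IBPF}, the lemma is applied with $1-\alpha$ in place of $\alpha$ to functions in $L^{q}$ and $L^1$ with $q\ge\frac{1}{1-\alpha}$. The choice $q=\frac{1}{1-\alpha}$ is exactly the excluded endpoint, so there the lemma cannot be invoked as stated.
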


Next, we introduce the definitions of the Caputo and Riemann–Liouville fractional derivatives.
\begin{itemize}
	\item  The left and right Caputo fractional derivatives of order $\alpha\in (0,1)$ are respectively defined by
	\begin{equation}
		\partial^{\alpha}_{0,t}g(t)=(I^{1-\alpha}_{0,t}g^{\prime})(t) \quad\mbox{and}\quad \partial^{\alpha}_{t,T}g(t)=(I^{1-\alpha}_{t,T}g^{\prime})(t)
	\end{equation}
	when the right-hand sides are well-defined. Note that, if $g\in W^{1,1}(0,T;H)$, then $\partial^{\alpha}_{0,t}g, \partial^{\alpha}_{t,T}g$ exist and $\partial^{\alpha}_{0,t}g, \partial^{\alpha}_{t,T}g\in L^1(0,T;H)$.
	\item The left and right Riemann–Liouville fractional derivatives of order $\alpha\in (0,1)$ are respectively defined by
	\begin{equation*}
		D^{\alpha}_{0,t}g(t)=\frac{\mathrm{d}}{\mathrm{d} t}(I^{1-\alpha}_{0,t}g)(t) \quad\mbox{and}\quad D^{\alpha}_{t,T}g(t)=-\frac{\mathrm{d}}{\mathrm{d} t}(I^{1-\alpha}_{t,T}g)(t)
	\end{equation*}
	whenever the right-hand sides are well-defined. Note that, if $g\in W^{1,1}(0,T;H)$, then $D^{\alpha}_{0,t}g, D^{\alpha}_{t,T}g$ exist and belong to $L^1(0,T;H)$.
\end{itemize}

The next integration formula allows us to derive the adjoint system associated with the forward system involving the Riemann–Liouville fractional derivative.\\
\begin{prop}
	Let $\alpha\in (0,1)$ and $T>0$. We assume that
	\begin{itemize}
		\item $f\in C([0,T];H)$ such that $\partial^{\alpha}_{0,t}f \in L^{\infty}(0,T;V^{\prime})$ and $f^{\prime}\in L^{q}(0,T;V^{\prime})$ for some  $q\geq\frac{1}{1-\alpha}$,
		\item $g\in L^{1}(0,T;V)$ such that $D^{\alpha}_{t,T}g\in L^1(0,T;V)$.
	\end{itemize}
	Then 
	\begin{equation}\label{IBPF}
		\int_{0}^{T} \langle\partial^{\alpha}_{0,t}f(t), g(t)\rangle_{V^{\prime},V} \mathrm{d} t=\left[\langle f(t), (I^{1-\alpha}_{t,T}g)(t) \rangle_H\right]_{t=0}^{t=T} +\int_{0}^{T} \langle f(t), D^{\alpha}_{t,T}g(t)\rangle_{H} \mathrm{d} t.
	\end{equation}
\end{prop}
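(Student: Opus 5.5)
The plan is to reduce \eqref{IBPF} to the fractional integration by parts of Lemma \ref{Fractional integration by parts} combined with an ordinary integration by parts in time. By definition $\partial^{\alpha}_{0,t}f=I^{1-\alpha}_{0,t}f'$ with $f'\in L^q(0,T;V')$. We apply Lemma \ref{Fractional integration by parts} with order $1-\alpha$, the pair $(f',g)$, exponents $p=q$ for $f'$ and $1$ for $g$. The admissibility condition $\frac1q+1\le 1+(1-\alpha)$ is exactly $q\ge \frac{1}{1-\alpha}$. This gives
\[
\int_0^T\langle \partial^{\alpha}_{0,t}f,g\rangle_{V',V}\,\mathrm{d}t=\int_0^T\langle f'(t),(I^{1-\alpha}_{t,T}g)(t)\rangle_{V',V}\,\mathrm{d}t .
\]
The left side is well defined because $\partial^\alpha_{0,t}f\in L^\infty(0,T;V')$ and $g\in L^1(0,T;V)$.

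Next set $h:=I^{1-\alpha}_{t,T}g$. By Lemma \ref{Continuity of fractional integrals} (right version), $h\in L^1(0,T;V)$. By definition of the right Riemann--Liouville derivative, $h'=-D^{\alpha}_{t,T}g\in L^1(0,T;V)$, so $h\in W^{1,1}(0,T;V)\subset C([0,T];V)$. We then integrate by parts the product $t\mapsto\langle f(t),h(t)\rangle$, with $f'\in L^q(0,T;V')$ and $h'\in L^1(0,T;V)$, to obtain
\[
\int_0^T\langle f',h\rangle_{V',V}\,\mathrm{d}t=\big[\langle f,h\rangle\big]_0^T-\int_0^T\langle f,h'\rangle\,\mathrm{d}t=\big[\langle f,h\rangle_H\big]_0^T+\int_0^T\langle f,D^{\alpha}_{t,T}g\rangle_H\,\mathrm{d}t.
\]
The last pairing is an $H$-pairing because $f\in C([0,T];H)$ and $D^\alpha_{t,T}g\in L^1(0,T;V)\subset L^1(0,T;H)$. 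Combining the two displays yields \eqref{IBPF}.

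The main obstacle is justifying this product rule with mixed regularity. We have $f\in C([0,T];H)\cap W^{1,q}(0,T;V')$ and $h\in W^{1,1}(0,T;V)$, but $f'$ is only $V'$-valued, so we need $f\in L^\infty(0,T;V')$ paired against $h'\in L^1(0,T;V)$, and $f'\in L^q(0,T;V')$ paired against $h\in C([0,T];V)\subset L^{q'}(0,T;V)$. Both pairings are integrable. To prove the formula, first mollify in time, or approximate $h$ by smooth $V$-valued functions $h_k\to h$ in $W^{1,1}(0,T;V)$. For each $k$, the scalar function $\langle f(t),h_k(t)\rangle_{V',V}$ is absolutely continuous with derivative $\langle f',h_k\rangle+\langle f,h_k'\rangle$, since $f\in W^{1,q}(0,T;V')$. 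Then pass to the limit using $W^{1,1}\hookrightarrow C$ for the boundary terms, and use $\langle f,h\rangle_{V',V}=\langle f,h\rangle_H$ because $f(t)\in H$. A secondary point is well-definedness of the boundary value $h(0)=(I^{1-\alpha}_{t,T}g)(0)$, which is covered by the continuity of $h$ just established.
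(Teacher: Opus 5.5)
Your proposal is correct and follows the paper's proof step for step. You write $\partial^{\alpha}_{0,t}f=I^{1-\alpha}_{0,t}f'$, apply Lemma \ref{Fractional integration by parts} with order $1-\alpha$ (where the admissibility condition reduces exactly to $q\ge\frac{1}{1-\alpha}$), and then integrate by parts classically using $I^{1-\alpha}_{t,T}g\in W^{1,1}(0,T;V)$ with derivative $-D^{\alpha}_{t,T}g$. Your approximation argument for the product rule just spells out what the paper calls the ``standard integration by parts in $W^{1,1}(0,T;V')\times W^{1,1}(0,T;V)$.''
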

\begin{proof}
	Using the Caputo derivative definition, we rewrite
	\begin{align*}
		\int_{0}^{T} \langle\partial^{\alpha}_{0,t}f(t), g(t)\rangle_{V^{\prime},V}\mathrm{d} t=\int_{0}^{T} \left\langle (I^{1-\alpha}_{0,t}f^{\prime})(t), g(t)\right\rangle_{V^{\prime},V} \mathrm{d} t.
	\end{align*}
	By Lemma \ref{Fractional integration by parts} for $1-\alpha$ (instead of $\alpha$), $p=1$ and $q\geq \frac{1}{1-\alpha}$, we obtain
	\begin{align}
		\int_{0}^{T} \langle\partial^{\alpha}_{0,t}f(t), g(t)\rangle_{V^{\prime},V}\mathrm{d} t=\int_{0}^{T} \left\langle f^{\prime}(t), (I^{1-\alpha}_{t,T}g)(t) \right\rangle_{V^{\prime},V} \mathrm{d} t. \label{Integral}
	\end{align}
	Applying the standard integration by parts in $W^{1,1}(0,T;V^{\prime})\times W^{1,1}(0,T;V)$ to $f$ and $I^{1-\alpha}_{t,T}g$, the integral on the right-hand side of \eqref{Integral} yields \eqref{IBPF}.
\end{proof}
\begin{rmk} 
	$(I^{1-\alpha}_{t,T}g)(t)$ is well defined as an element of $V$ for all $t\in [0,T]$, due to $I^{1-\alpha}_{t,T} g \in W^{1,1}(0,T;V)$, since $g \in L^1(0,T;V)$ (then $I^{1-\alpha}_{t,T} g\in L^1(0,T;V)$ see Lemma \ref{Continuity of fractional integrals}) and $\frac{\mathrm{d}}{\mathrm{d} t}(I^{1-\alpha}_{t,T}g)(t)=-D^{\alpha}_{t,T}g \in L^1(0,T;V)$.
\end{rmk}
\begin{rmk}
	Note that, by a density argument, \eqref{IBPF} remains valid even without assuming any regularity of the usual derivative ($f\in C([0,T];H)$ such that $\partial^{\alpha}_{0,t}f \in L^{\infty}(0,T;V^{\prime})$ and $g\in L^{1}(0,T;V)$ such that $D^{\alpha}_{t,T}g\in L^1(0,T;V)$).
\end{rmk}

Recall that, for $\alpha>0$ and $\beta\in \mathbb{R}$, the Mittag--Leffler function $E_{\alpha,\beta}(z)$ defined in \eqref{MLdef} is an entire function. Moreover, we have the following lemmas; we refer to \cite[Pages 32-35]{podlubny99} for the proofs.

The first lemma concerns the asymptotic expansion of the Mittag--Leffler functions as $|z|\to \infty$.
\begin{lem}
	Let $\alpha\in(0,2)$ and $\beta \in \mathbb{R}$ be arbitrary. Let $\mu$ be such that $\frac{\alpha\pi}{2}<\mu< \min \{\pi, \pi \alpha\}$. Then, for any $p\ge 1$,
	\begin{equation}\label{asymptotic expansions}
		E_{\alpha, \beta}(z)=-\sum_{k=1}^p\frac{1}{\Gamma(\beta-\alpha k)}\frac{1}{z^k} + \mathcal{O}\left(\frac{1}{|z|^{1+p}} \right), \; |z|\to \infty, \; \mu \leq |\arg (z)| \leq \pi.
	\end{equation}
\end{lem}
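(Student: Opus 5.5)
The plan is to derive the expansion from an integral representation of $E_{\alpha,\beta}$ over a Hankel-type contour. We start from Hankel's formula for the reciprocal Gamma function,
\[
\frac{1}{\Gamma(s)}=\frac{1}{2\pi \mathrm{i}}\int_{\mathrm{Ha}} e^{\zeta}\zeta^{-s}\,\mathrm{d}\zeta .
\]
Substituting $\zeta=\tau^{1/\alpha}$ and summing the geometric series inside the integral gives, for $|z|$ small and for a contour $\gamma(\varepsilon;\mu)$ made of two rays $\arg\tau=\pm\mu$ ($|\tau|\ge\varepsilon$) joined by the arc $|\tau|=\varepsilon$, $|\arg\tau|\le\mu$,
\[
E_{\alpha,\beta}(z)=\frac{1}{2\pi\alpha \mathrm{i}}\int_{\gamma(\varepsilon;\mu)}\frac{e^{\tau^{1/\alpha}}\tau^{(1-\beta)/\alpha}}{\tau-z}\,\mathrm{d}\tau .
\]
The condition $\frac{\alpha\pi}{2}<\mu<\min\{\pi,\pi\alpha\}$ ensures that $\mathrm{Re}\,\tau^{1/\alpha}\to-\infty$ along the rays, so the integral converges. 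The condition $\mu<\pi$ ensures that the contour stays inside the slit plane where $\tau^{1/\alpha}$ is defined.

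Both sides are analytic in $z$ on the region lying to the left of the contour, that is, for $z$ with $|\arg z|>\mu$ and $|z|>\varepsilon$. By analytic continuation the representation therefore holds on the whole sector $\mu\le|\arg z|\le\pi$. For points exactly on $|\arg z|=\mu$, slightly rotate the contour to an angle $\mu'\in(\frac{\alpha\pi}{2},\mu)$; no residue term $\frac{1}{\alpha}z^{(1-\beta)/\alpha}e^{z^{1/\alpha}}$ appears, since $z$ stays outside the region enclosed by the contour.

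Next, expand the Cauchy kernel exactly to order $p$:
\[
\frac{1}{\tau-z}=-\sum_{k=1}^{p}\frac{\tau^{k-1}}{z^{k}}+\frac{\tau^{p}}{z^{p}(\tau-z)} .
\]
Inserting this into the representation, the $k$-th term gives $-z^{-k}$ times
\[
\frac{1}{2\pi\alpha \mathrm{i}}\int_{\gamma}e^{\tau^{1/\alpha}}\tau^{(1-\beta)/\alpha+k-1}\,\mathrm{d}\tau .
\]
Undoing the substitution $\tau=\zeta^{\alpha}$ turns this into Hankel's formula with exponent $s=\beta-\alpha k$. Hence it equals $1/\Gamma(\beta-\alpha k)$, which produces exactly the claimed finite sum.

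The remainder is
\[
R_p(z)=\frac{1}{2\pi\alpha \mathrm{i}\, z^{p}}\int_{\gamma}\frac{e^{\tau^{1/\alpha}}\tau^{(1-\beta)/\alpha+p}}{\tau-z}\,\mathrm{d}\tau .
\]
The main obstacle is a uniform lower bound on $|\tau-z|$ along the contour for $z$ in the closed sector $\mu\le|\arg z|\le\pi$ with $|z|$ large. Work with the contour at angle $\mu'<\mu$ chosen above. Then the angular separation between $\arg z$ and $\arg\tau$ is at least $\mu-\mu'$, which gives
\[
|\tau-z|\ge |z|\sin(\mu-\mu') \quad\text{for } |z| \text{ large,}
\]
including on the small arc. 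Since $\int_\gamma|e^{\tau^{1/\alpha}}\tau^{(1-\beta)/\alpha+p}|\,|\mathrm{d}\tau|<\infty$ thanks to the exponential decay along the rays, we obtain $|R_p(z)|\le C|z|^{-p-1}$. This is the stated $\mathcal{O}(|z|^{-1-p})$ bound, uniformly on the sector.
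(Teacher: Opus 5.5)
Your proposal is correct and follows essentially the same route as the proof the paper cites (Podlubny, Theorem 1.4, pp.~32--35). That proof also uses the contour representation $E_{\alpha,\beta}(z)=\frac{1}{2\pi\alpha\mathrm{i}}\int_{\gamma(\varepsilon;\theta)}\frac{e^{\tau^{1/\alpha}}\tau^{(1-\beta)/\alpha}}{\tau-z}\,\mathrm{d}\tau$ with $\frac{\alpha\pi}{2}<\theta<\mu$, expands the Cauchy kernel to order $p$ and identifies the coefficients $1/\Gamma(\beta-\alpha k)$ via Hankel's formula, and bounds the remainder using $\min_{\tau\in\gamma}|\tau-z|\ge|z|\sin(\mu-\theta)$. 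One small precision: the region to the left of the contour is $\{|\arg z|>\mu\}\cup\{|z|<\varepsilon\}$, not only the part with $|z|>\varepsilon$; the disk is what connects it to the small-$|z|$ region where your geometric-series derivation holds, so the analytic continuation step needs it.
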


Consequently, we also have the following boundedness result.
\begin{lem}\label{MLbd}
	Let $\alpha\in(0,1)$ and $\beta\in \mathbb{R}$. There exists a constant $C_0>0$ depending only on $\alpha$ and $\beta$ such that
	\begin{align}\label{es0}
		& \left|E_{\alpha, \beta}(\mathrm{i} t)\right| \leq \frac{C_0}{1+|t|} \leq C_0\qquad \forall t\in \mathbb{R}.
	\end{align}
\end{lem}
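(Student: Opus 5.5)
The plan is to combine the asymptotic expansion \eqref{asymptotic expansions} for large $|t|$ with continuity of $E_{\alpha,\beta}$ on compact sets. Write $z=\mathrm{i}t$ with $t\in\mathbb{R}$. Then $|\arg(z)|=\pi/2$. Since $\alpha\in(0,1)$, we have $\frac{\alpha\pi}{2}<\frac{\pi}{2}$ and $\min\{\pi,\pi\alpha\}=\pi\alpha$. We must therefore pick $\mu$ with $\frac{\alpha\pi}{2}<\mu<\pi\alpha$ and also $\mu\le \frac{\pi}{2}$.

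This is possible because $\frac{\alpha\pi}{2}<\min\{\pi\alpha,\frac{\pi}{2}\}$ for every $\alpha\in(0,1)$. For instance, take $\mu$ strictly between $\frac{\alpha\pi}{2}$ and $\min\{\pi\alpha,\frac{\pi}{2}\}$. With this choice $\mathrm{i}t$ lies in the sector $\mu\le|\arg z|\le\pi$ for all $t\neq0$. This compatibility check is the only point needing care, and it is also the reason the hypothesis $\alpha<1$ is used.

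Apply \eqref{asymptotic expansions} with $p=1$. There exist $R>0$ and $C_1>0$ such that
\[
|E_{\alpha,\beta}(\mathrm{i}t)|\le \frac{1}{|\Gamma(\beta-\alpha)|\,|t|}+\frac{C_1}{|t|^2}\qquad\text{for } |t|\ge R.
\]
Here $1/\Gamma$ is interpreted as $0$ at the poles, so the first term vanishes if $\beta-\alpha$ is a nonpositive integer. For $|t|\ge \max\{R,1\}$ this gives $|E_{\alpha,\beta}(\mathrm{i}t)|\le C_2/|t|\le 2C_2/(1+|t|)$, with $C_2=|1/\Gamma(\beta-\alpha)|+C_1$.

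On the compact interval $|t|\le\max\{R,1\}$, the map $t\mapsto E_{\alpha,\beta}(\mathrm{i}t)$ is continuous, since $E_{\alpha,\beta}$ is entire. It is therefore bounded there by some $M$. Hence on this interval $|E_{\alpha,\beta}(\mathrm{i}t)|\le M(1+\max\{R,1\})/(1+|t|)$.

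Setting $C_0=\max\{2C_2,\,M(1+\max\{R,1\})\}$ gives the first inequality in \eqref{es0}. This constant depends only on $\alpha$ and $\beta$, since $R$, $C_1$ and $M$ come from the fixed function $E_{\alpha,\beta}$ and the chosen $\mu(\alpha)$. The second inequality, $\frac{C_0}{1+|t|}\le C_0$, is trivial.
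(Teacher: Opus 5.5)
Your proposal is correct and follows the same route as the paper. The paper states the bound as a direct consequence of the asymptotic expansion \eqref{asymptotic expansions} (with the proof deferred to \cite{podlubny99}): apply the expansion with $p=1$ along the imaginary axis, and use continuity of the entire function $E_{\alpha,\beta}$ on a compact interval. Your explicit check that one can pick $\mu\in\bigl(\frac{\alpha\pi}{2},\min\{\pi\alpha,\frac{\pi}{2}\}\bigr)$ is exactly where the hypothesis $\alpha<1$ enters.
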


We propose a new definition for the fractional characteristic function, which is of independent interest:
\begin{defn} \label{df:FCF}
	Let $\xi$ be a real random variable. We define the \textbf{two-parameter Fractional Characteristic Function (FCF)} of $\xi$ by
	\begin{equation}\label{def:fractional charfunc}
		\varphi_{\alpha,\beta}^{(\xi)} (s) := \mathbb{E}(E_{\alpha,\beta}(\mathrm{i}s\xi))=\int_{-\infty}^{\infty}E_{\alpha,\beta}(\mathrm{i}s\xi)\,\mathrm{d}\mu_{\xi }(\xi), \qquad s\in \mathbb{R}.
	\end{equation}
	In the case $\alpha = \beta = 1$, we recover the usual characteristic function. Note that this definition is different from its counterpart in \cite{tomovski2022}.
\end{defn}

The following lemma generalizes certain properties of classical characteristic functions, such as boundedness, as well as the Riemann–Lebesgue lemma for absolutely continuous random variables.
\begin{lem}\label{lm:van der Corput}
	Let $\alpha\in (0,1)$ and $\beta>0$. For any random variable $\xi$ on $(\Omega, \mathcal{F}, \mathbb{P})$, we have
	\begin{enumerate}
		\item $\varphi_{\alpha, \beta}^{(\xi)}(s)$ is well-defined for all real $s$.
		\item $|\varphi_{\alpha, \beta}^{(\xi)}(s)|\le C_0$ for all $s\in\mathbb{R}$,
		where $C_0$ is the constant introduced in Lemma \ref{MLbd}.
		\item Moreover, if $\xi$ is an absolutely continuous random variable, then 
		$$\varphi_{\alpha, \beta}^{(\xi)}(s)\to 0\quad \mbox{as}\;\; |s|\to \infty.$$
	\end{enumerate}
\end{lem}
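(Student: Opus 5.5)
Parts 1 and 2 follow directly from Lemma \ref{MLbd}. For every real $s$ and every value $\xi$, the function $\xi\mapsto E_{\alpha,\beta}(\mathrm{i}s\xi)$ is continuous, because $E_{\alpha,\beta}$ is entire. It is therefore Borel measurable. By \eqref{es0} it is also bounded by $C_0$, uniformly in $s$ and $\xi$. Since $\mu_\xi$ is a probability measure, the integral in \eqref{def:fractional charfunc} converges absolutely, which gives part 1. It also gives $|\varphi^{(\xi)}_{\alpha,\beta}(s)|\le \int C_0\,\mathrm{d}\mu_\xi = C_0$, which is part 2. Note that \eqref{es0} is stated for $\alpha\in(0,1)$ and real $\beta$, so $\beta>0$ is covered.

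For part 3, let $\rho_\xi\in L^1(\mathbb{R})$ be the density of $\xi$, so that $\varphi^{(\xi)}_{\alpha,\beta}(s)=\int_{\mathbb{R}} E_{\alpha,\beta}(\mathrm{i}s x)\rho_\xi(x)\,\mathrm{d}x$. I will not use oscillatory cancellation as in the classical Riemann--Lebesgue lemma. The decay in Lemma \ref{MLbd} is already pointwise: $|E_{\alpha,\beta}(\mathrm{i}sx)|\le C_0/(1+|s||x|)$. For each fixed $x\neq0$ this tends to $0$ as $|s|\to\infty$. The exceptional set $\{x=0\}$ has Lebesgue measure zero, so it carries no $\rho_\xi$-mass. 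The integrand is dominated by $C_0\rho_\xi\in L^1$. Dominated convergence, applied along any sequence $|s_k|\to\infty$, then gives $\varphi^{(\xi)}_{\alpha,\beta}(s_k)\to0$, and hence the limit as $|s|\to\infty$.

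If a quantitative version is preferred, split the integral over $\{|x|<\delta\}$ and its complement. The first piece is at most $C_0\int_{|x|<\delta}\rho_\xi$, which is small for small $\delta$ by absolute continuity of the integral. The second piece is at most $C_0/(1+|s|\delta)$, which tends to $0$ as $|s|\to\infty$.

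The only delicate point is the role of absolute continuity. The argument requires $\mu_\xi(\{0\})=0$: an atom at the origin would contribute the constant $E_{\alpha,\beta}(0)\mu_\xi(\{0\})=\mu_\xi(\{0\})/\Gamma(\beta)$, which does not vanish. Absolute continuity guarantees there is no such atom. In fact this approach proves the stronger statement that $\varphi^{(\xi)}_{\alpha,\beta}(s)\to0$ whenever $\mu_\xi(\{0\})=0$. This is consistent with the paper's framing of ``random variables whose FCF vanishes at infinity.''
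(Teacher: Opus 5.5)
Your proof is correct and follows essentially the paper's argument: parts 1 and 2 come from the uniform bound $|E_{\alpha,\beta}(\mathrm{i}s\xi)|\le C_0$ of Lemma~\ref{MLbd}, and part 3 is dominated convergence using the pointwise bound $C_0/(1+|s||x|)$ and the dominating function $C_0\rho_\xi\in L^1(\mathbb{R})$. Your extra observation that the argument only needs $\mu_\xi(\{0\})=0$ is also correct and slightly strengthens the statement (it is consistent, for instance, with $E_{2\alpha,\beta}(-s^2)\to 0$ for the Rademacher law).
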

\begin{proof}
	Using Lemma \ref{MLbd}, we obtain
	\[|E_{\alpha,\beta}(\mathrm{i}s\xi)|\le C_0,\quad s\in\mathbb{R}.\]
	Then, the first point follows from the fact that the probability measure is finite, and we have
	\begin{align*}
		|\varphi_{\alpha, \beta}^{(\xi)}(s)|\le& \mathbb{E}(|E_{\alpha,\beta}(\mathrm{i}s\xi)|)\\
		\le& \mathbb{E}(C_0)=C_0,\quad s\in\mathbb{R}.
	\end{align*}
	Moreover, if $\xi$ is an absolutely continuous random variable, then its \textbf{PDF} satisfies $\rho_{\xi}\in L^{1}(\mathbb{R})$, and its \textbf{FCF} is given by
	\[\varphi_{\alpha, \beta}^{(\xi)}(s)=\int_{-\infty}^{\infty}E_{\alpha,\beta}(\mathrm{i}s\xi)\rho_{\xi}(\xi)\,\mathrm{d}\xi,\quad s\in\mathbb{R}.\]
	Using Lemma \ref{MLbd}, we obtain for almost every $\xi\in\mathbb{R}$
	\begin{align*}
		|E_{\alpha,\beta}(\mathrm{i}s\xi)\rho_{\xi}(\xi)|\le& \frac{C_0}{1+|s||\xi|}\rho_{\xi}(\xi)\to 0\quad\mbox{as}\; |s|\to\infty, \nonumber\\
		\le& C_0 \rho_{\xi}(\xi)\in L^1(\mathbb{R}).
	\end{align*}
	Then, the third point follows directly from the dominated convergence theorem.
\end{proof}
\begin{rmk}
	In the situation $0<\alpha\le 1$ and $\beta\ge \alpha$, we have
	\begin{align*}
		|\varphi_{\alpha, \beta}^{(\xi)}(s)|\le& \frac{1}{\Gamma(\beta)},\quad s\in\mathbb{R},
	\end{align*}
	due to $|E_{\alpha,\beta}(z)|\le E_{\alpha,\beta}(\mathrm{Re}(z))$ for all $z\in\mathbb{C}$ (see Theorem 1 in \cite{garrappa2025some}). In particular, when $\alpha=\beta=1$, we recover the fact that the usual characteristic is bounded by $1$.
\end{rmk}
\begin{exa} \label{Ex: Rademacher distribution}
	An interesting example of a random variable is given by the Rademacher distribution, defined
	by
	\[
	\mathbb{P}(\xi = -1) = \mathbb{P}(\xi = 1) = \frac{1}{2},
	\]
	which is widely used in applications requiring symmetric discrete randomness, such as randomized algorithms and statistical simulations.
\end{exa}
\begin{prop}\label{prop:FCFRade}
	The Rademacher distribution has an \textbf{FCF} given by
	\[
	\varphi_{\alpha,\beta}^{(\xi)}(s) = E_{2\alpha,\beta}(-s^2),\quad s\in\mathbb{R}.
	\]
\end{prop}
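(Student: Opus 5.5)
Since $\xi$ takes only the values $\pm1$, each with probability $\tfrac12$, the definition \eqref{def:fractional charfunc} reduces to a finite average:
\[
\varphi_{\alpha,\beta}^{(\xi)}(s)=\tfrac12\bigl(E_{\alpha,\beta}(\mathrm{i}s)+E_{\alpha,\beta}(-\mathrm{i}s)\bigr).
\]
This is well defined by Lemma \ref{lm:van der Corput}, and no integrability issue arises because the measure is a sum of two Dirac masses. The plan is to compute this even part of $E_{\alpha,\beta}$ directly from the power series \eqref{MLdef}.

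Since $E_{\alpha,\beta}$ is entire, its series converges absolutely at $z=\pm\mathrm{i}s$, so we may add the two series termwise:
\[
\tfrac12\sum_{n=0}^\infty \frac{(\mathrm{i}s)^n+(-\mathrm{i}s)^n}{\Gamma(\alpha n+\beta)}.
\]
For odd $n$ the numerator vanishes. For even $n=2k$ it equals $2(\mathrm{i}s)^{2k}=2(-1)^k s^{2k}=2(-s^2)^k$. The sum therefore collapses to
\[
\sum_{k=0}^\infty \frac{(-s^2)^k}{\Gamma(2\alpha k+\beta)}=E_{2\alpha,\beta}(-s^2),
\]
again by \eqref{MLdef}, now with parameters $(2\alpha,\beta)$. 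This gives the claim for every $s\in\mathbb{R}$.

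There is no real obstacle here. The only care needed is to justify the rearrangement into even and odd terms, which follows from absolute convergence of the Mittag--Leffler series for any $\alpha>0$. The identity $\tfrac12\bigl(E_{\alpha,\beta}(z)+E_{\alpha,\beta}(-z)\bigr)=E_{2\alpha,\beta}(z^2)$ obtained along the way is the general identity \eqref{MLi} invoked in the introduction (there with $\beta=1$ and $z=\mathrm{i}\lambda_n t^\alpha$), so one could equally cite it and substitute $z=\mathrm{i}s$.
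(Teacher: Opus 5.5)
Your proof is correct and is essentially the paper's argument: the paper writes the average as $\tfrac12\bigl(E_{\alpha,\beta}(-\mathrm{i}s)+E_{\alpha,\beta}(\mathrm{i}s)\bigr)=\mathrm{Re}\,E_{\alpha,\beta}(\mathrm{i}s)$ and then cites \eqref{MLi}, $E_{\alpha,\beta}(\mathrm{i}s)=E_{2\alpha,\beta}(-s^2)+\mathrm{i}s\,E_{2\alpha,\alpha+\beta}(-s^2)$, which amounts to the even/odd splitting of the series that you carry out by hand. Strictly speaking, your identity $\tfrac12\bigl(E_{\alpha,\beta}(z)+E_{\alpha,\beta}(-z)\bigr)=E_{2\alpha,\beta}(z^2)$ is only the even half of \eqref{MLi} (extended to complex $z$), but since the odd parts cancel in the average this changes nothing.
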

\begin{proof}
	For the Rademacher variable $\xi$, we have
	\begin{align*}
		\varphi_{\alpha,\beta}^{(\xi)}(s)&= E_{\alpha,\beta}(-\mathrm{i}s)\mathbb{P}(\xi=-1) + E_{\alpha,\beta}(\mathrm{i}s)\mathbb{P}(\xi=1)\\
		&= \frac{1}{2}\left(E_{\alpha,\beta}(-\mathrm{i}s)+E_{\alpha,\beta}(\mathrm{i}s)\right)\\
		&= \mathrm{Re}~E_{\alpha,\beta}(\mathrm{i}s)\\
		&=  E_{2\alpha,\beta}(-s^2).
	\end{align*}
	Note that we have used the identity
	\begin{equation}\label{MLi}
		E_{\alpha,\beta}(\mathrm{i} s)=E_{2 \alpha, \beta}\left(-s^2\right)+\mathrm{i} s E_{2 \alpha, \alpha+\beta}\left(-s^2\right), \quad s\in \mathbb{R}.
	\end{equation}
\end{proof}
%\textcolor{red}{
	%\begin{remark}
	%This result is surprising and unexpected, since the usual characteristic function (i.e., $\alpha=\beta=1$) of a discrete random variable never tends to zero at infinity.
	%\end{remark}
	%}
\subsection{Averaged controllability}
First, we provide some well-posedness results for the forward and adjoint problems under consideration.
\begin{prop} \label{FP}
	Let $\alpha\in(0,1)$, $T>0$, $\xi\in \mathbb{R}^*$ a nonzero real,  $y_0\in L^{2}(G)$ and $u\in L^{\infty}(0,T;L^{2}(G))$. Then, there exists a unique weak solution $y \in C([0,T];L^{2}(G))$ to the system \eqref{sys:frac Schr} satisfying
	\begin{align}
		\|y\|_{C([0,T];L^{2}(G))}&\le C \left(\|y_0\|_{L^2(G)}+\|u\|_{L^\infty(0,T;L^2(G))}\right),\label{stabest1}\\
		\|\partial^{\alpha}_{0,t} y\|_{L^{\infty}(0,T;H^{-1}(G))} &\leq C \left(\|y_0\|_{L^2(G)}+\|u\|_{L^\infty(0,T;L^2(G))}\right), \label{stabest2}
	\end{align}
	for some constant $C>0$. Moreover, the unique solution is given by the formula
	\begin{equation}\label{solution formula}
		\begin{split}
			y(t,\cdot)=\sum_{n=0}^{\infty}\left[\left\langle y_0, e_n\right\rangle E_{\alpha, 1}\left(-\mathrm{i}\xi\lambda_n t^\alpha\right) + \int_{0}^{t}\left\langle \mathds{1}_{G_0}u(s,\cdot), e_n\right\rangle (t-s)^{\alpha-1}E_{\alpha, \alpha}\left(-\mathrm{i}\xi\lambda_n (t-s)^\alpha\right)\,\mathrm{d} s\right] e_n.
		\end{split}
	\end{equation}
\end{prop}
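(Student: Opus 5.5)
The plan is to build the solution by the spectral (Galerkin) method in the basis $\{e_n\}$ and then derive both estimates mode by mode from Lemma~\ref{MLbd}. Projecting \eqref{sys:frac Schr} onto $e_n$ gives the scalar Caputo problems
\[
\partial^\alpha_{0,t} y_n+\mathrm{i}\xi\lambda_n y_n=u_n(t):=\langle \mathds{1}_{G_0}u(t,\cdot),e_n\rangle,\qquad y_n(0)=\langle y_0,e_n\rangle .
\]
By the standard Laplace-transform computation, each has the unique solution
\[
y_n(t)=\langle y_0,e_n\rangle E_{\alpha,1}(-\mathrm{i}\xi\lambda_n t^\alpha)+\int_0^t (t-s)^{\alpha-1}E_{\alpha,\alpha}(-\mathrm{i}\xi\lambda_n(t-s)^\alpha)\,u_n(s)\,\mathrm{d}s .
\]
Summing these modes gives \eqref{solution formula}, and uniqueness in $C([0,T];L^2(G))$ follows because any weak solution with zero data has every Fourier coefficient solving the homogeneous scalar problem, which forces each coefficient to vanish.

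\textbf{Step 1: estimate \eqref{stabest1}.} Apply Lemma~\ref{MLbd} with $\beta=1$ and $\beta=\alpha$; its bound $|E_{\alpha,\beta}(\mathrm{i}t)|\le C_0$ holds for all real $t$, so in particular at $t=-\xi\lambda_n\tau^\alpha$. The free part then satisfies $\|y(t)\|\le C_0\|y_0\|$. For the Duhamel part, Minkowski's inequality in $\ell^2$ together with Bessel's inequality give
\[
\Big\|\int_0^t(t-s)^{\alpha-1}E_{\alpha,\alpha}(\cdots)u_n(s)\,\mathrm{d}s\Big\|_{\ell^2}\le C_0\int_0^t(t-s)^{\alpha-1}\|u(s)\|\,\mathrm{d}s\le \frac{C_0T^\alpha}{\alpha}\|u\|_{L^\infty(0,T;L^2(G))},
\]
which is Lemma~\ref{Continuity of fractional integrals}. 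For continuity in time, the partial sums are continuous, and the tails converge uniformly because the same bounds apply to the tails of $y_0$ and $u$ (dominated convergence in $s$ for the integral term). This yields $y\in C([0,T];L^2(G))$ and \eqref{stabest1}.

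\textbf{Step 2: estimate \eqref{stabest2}.} Differentiating the series term by term and using the equation, I would write $\partial^\alpha_{0,t}y=\mathrm{i}\xi\Delta y+\mathds{1}_{G_0}u$, whose $n$-th coefficient is $-\mathrm{i}\xi\lambda_n y_n+u_n$. The term $u_n$ is harmless since $\|\mathds{1}_{G_0}u\|_{H^{-1}}\le C\|u\|_{L^2}$. With $\|f\|_{H^{-1}}^2=\sum\lambda_n^{-1}|f_n|^2$, what must be bounded uniformly in $t$ is
\[
\sum_n \lambda_n|\xi|^2|y_n(t)|^2 .
\]

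For the Duhamel part, I would integrate by parts in $s$ using
\[
\frac{\mathrm{d}}{\mathrm{d}s}E_{\alpha,1}(-\mathrm{i}\xi\lambda_n(t-s)^\alpha)=\mathrm{i}\xi\lambda_n(t-s)^{\alpha-1}E_{\alpha,\alpha}(-\mathrm{i}\xi\lambda_n(t-s)^{\alpha}).
\]
This converts $\xi\lambda_n\int_0^t\cdots$ into boundary terms plus an integral of $E_{\alpha,1}$ against $u_n'$. If $u$ has no time derivative, I would instead use the decay $|E_{\alpha,\alpha}(\mathrm{i}\tau)|\le C_0/(1+|\tau|)$. This gives
\[
\lambda_n\int_0^t(t-s)^{\alpha-1}\frac{\mathrm{d}s}{1+|\xi|\lambda_n(t-s)^\alpha}\lesssim \frac{1}{|\xi|}\log\bigl(1+|\xi|\lambda_nT^\alpha\bigr),
\]
and the logarithmic loss is then absorbed by the weight $\lambda_n^{-1/2}$ of the $H^{-1}$ norm.

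For the free part, the bound $|\xi\lambda_n E_{\alpha,1}(-\mathrm{i}\xi\lambda_nt^\alpha)|\le C_0/t^\alpha$, again from Lemma~\ref{MLbd}, controls the interior $t\ge\delta>0$. \textbf{The main obstacle} is uniformity as $t\to0^+$, where $E_{\alpha,1}\to1$ and the weight $\lambda_n$ is no longer compensated by decay. There I would first try to exploit the $H^{-1}$ weight together with the decay factor $1/(1+|\xi|\lambda_nt^\alpha)$: for each $t$, split the modes into $\lambda_n t^\alpha\le1$ and $\lambda_n t^\alpha>1$, and sum using $y_0\in L^2(G)$. If that fails, I would establish \eqref{stabest2} by density: first for $y_0\in H^1_0(G)$, where the bound is immediate from $\sum\lambda_n|\langle y_0,e_n\rangle|^2$, and then identify $\partial^\alpha_{0,t}y$ in the weak sense of the integration-by-parts formula \eqref{IBPF}.
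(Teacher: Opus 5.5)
Your construction, the mode-by-mode formula, the uniqueness argument and Step 1 agree with the paper's proof. The paper gets uniform convergence of the tails from $\sup_{n\ge p}|k_n(s)|\le C_0s^{\alpha-1}$ and $\sup_{n\ge p}|k_n(s)|\to0$, while you use the tails of $y_0$ and $u$; both work.

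The gap is in Step 2, and it is not a technical one. With the true $H^{-1}(G)$ norm $\sum_n\lambda_n^{-1}|f_n|^2$, which you correctly use, estimate \eqref{stabest2} is false. So neither of your fallbacks can succeed. Take $u=0$ and $y_0=e_n$. Then $\partial^\alpha_{0,t}y(t)=-\mathrm{i}\xi\lambda_nE_{\alpha,1}(-\mathrm{i}\xi\lambda_nt^\alpha)e_n$, and
\[
\|\partial^\alpha_{0,t}y(t)\|_{H^{-1}(G)}=|\xi|\lambda_n^{1/2}\,\bigl|E_{\alpha,1}(-\mathrm{i}\xi\lambda_nt^\alpha)\bigr|\longrightarrow|\xi|\lambda_n^{1/2}\qquad(t\to0^+),
\]
while $\|y_0\|_{L^2(G)}=1$. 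Hence no constant $C$ works for all $n$. More generally, for $y_0\in L^2(G)\setminus H^1_0(G)$, Fatou's lemma gives $\liminf_{t\to0^+}\|\partial^\alpha_{0,t}y(t)\|_{H^{-1}(G)}^2\ge\xi^2\sum_n\lambda_n|\langle y_0,e_n\rangle|^2=\infty$. Your two fallbacks fail for exactly this reason:
\begin{itemize}
\item Splitting the modes at $\lambda_nt^\alpha\le1$ leaves $\sum_{\lambda_n\le t^{-\alpha}}\lambda_n|\langle y_0,e_n\rangle|^2$, which $\|y_0\|_{L^2(G)}$ does not control.
\item The density route only yields a bound in terms of $\|y_0\|_{H^1_0(G)}$, and such a bound does not pass to the $L^2$-closure.
\end{itemize}

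The paper's proof obtains \eqref{stabest2} only because it writes the dual norm as $\sum_n\lambda_n^{-2}|f_n|^2$. That is the norm of $(H^2(G)\cap H^1_0(G))'$, not of $H^{-1}(G)$. In that weaker space $\|\Delta y(t)\|=\|y(t)\|_{L^2(G)}$, and the estimate follows in one line from $|\partial^\alpha_{0,t}y_n|\le|\xi|\lambda_n|y_n|+|u_n|$ and \eqref{stabest1}. The repair is therefore one of the following:
\begin{itemize}
\item state and prove \eqref{stabest2} in $(H^2(G)\cap H^1_0(G))'$; or
\item assume $y_0\in H^1_0(G)$ and put $\|y_0\|_{H^1_0(G)}$ on the right-hand side.
\end{itemize}

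Separately, your bound for the Duhamel part is true in $H^{-1}(G)$, but as written it does not sum. Pulling out $\sup_s|u_n(s)|$ leaves $\sum_n\lambda_n^{-1}\log^2(1+|\xi|\lambda_nT^\alpha)\sup_s|u_n(s)|^2$, which $\|u\|_{L^\infty(0,T;L^2(G))}$ does not control. Instead, keep $|u_n(s)|$ under the integral and apply Minkowski's inequality. Use the bound $\sup_n\lambda_n^{1/2}|k_n(\sigma)|\le\frac{C_0}{2}|\xi|^{-1/2}\sigma^{\alpha/2-1}$, which follows from Lemma \ref{MLbd} and is integrable on $(0,T)$; here $k_n(\sigma)=\sigma^{\alpha-1}E_{\alpha,\alpha}(-\mathrm{i}\xi\lambda_n\sigma^\alpha)$.
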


\begin{rmk}
	The solution is essentially a Duhamel formula. Indeed, as proved in \cite{naber2004}:
	\[\partial^\alpha_t\Big(E_{\alpha,\beta}(t^\alpha)\Big)=E_{\alpha,\beta}(t^\alpha).\]
\end{rmk}

\begin{proof}
	Let $y_n \in C\left(\mathbb{R}^{+};\mathbb{C}\right)$ be defined by
	\begin{equation}\label{yn}
		y_n(t)= E_{\alpha, 1}\left(-\mathrm{i}\xi\lambda_n t^\alpha\right) y_{0, n} +\int_0^t k_n(t-s) u_n(s) \mathrm{d} s, \quad t>0,
	\end{equation}
	where $y_{0, n}=\left\langle y_0, e_n\right\rangle$, $k_n(s)=s^{\alpha-1}E_{\alpha, \alpha}\left(-\mathrm{i}\xi\lambda_n s^\alpha\right)$ and $u_n(s)=\left\langle \mathds{1}_{G_0} u(s,\cdot), e_n\right\rangle \mathds{1}_{(0, T)}(s)$. 
	
	We next prove that the series $\displaystyle\sum_{n \geq 0} y_n(t) e_n(x)$ converges to a weak solution of \eqref{sys:frac Schr}.
	
	By Lemma \ref{MLbd}, for all $t>0$, and all $p, q \in\mathbb{N}$, we have:
	$$
	\begin{aligned}
		\left\|\sum_{n=p}^{p+q} y_n(t) e_n\right\|_{L^2(G)} &\leq  C_0\left\|\sum_{n=p}^{p+q} y_{0, n} e_n\right\|_{L^2(G)}+\int_0^t \left\|\sum_{n=p}^{p+q} k_{n}(t-s) u_n(s) e_n\right\|_{L^2(G)} \mathrm{d}s\\
		& \le C_0\left\|\sum_{n=p}^{p+q} y_{0, n} e_n\right\|_{L^2(G)}+\int_0^t \sup_{n\ge p}|k_n(t-s)|\left\|\sum_{n=p}^{p+q} u_n(s) e_n\right\|_{L^2(G)}\mathrm{d}s\\
		&\le C_0\left\|\sum_{n=p}^{p+q} y_{0, n} e_n\right\|_{L^2(G)}+\|u\|_{L^\infty(0,T;L^2(G))}\int_0^t \sup_{n\ge p}|k_n(t-s)|\mathrm{d}s\\
		&= C_0\left\|\sum_{n=p}^{p+q} y_{0, n} e_n\right\|_{L^2(G)}+\|u\|_{L^\infty(0,T;L^2(G))}\int_0^t \sup_{n\ge p}|k_n(s)|\mathrm{d}s.
	\end{aligned}
	$$
	Then, for all $\tau>0$, we obtain:
	\begin{equation}\label{estkn}
		\sup_{t \in [0, \tau]}\left\|\sum_{n=p}^{p+q} y_n(t) e_n\right\|_{L^2(G)} \leq C_0\left\|\sum_{n=p}^{p+q} y_{0, n} e_n\right\|_{L^2(G)}+ \|u\|_{L^\infty(0,T;L^2(G))}\int_0^\tau \sup_{n\ge p}|k_n(s)|\mathrm{d}s.
	\end{equation}
	By Lemma \ref{MLbd}, we have for all $n\ge p$ and $s\in (0,\tau]$,
	\begin{align*}
		|k_n(s)|
		=& s^{\alpha-1}\,\bigl|E_{\alpha,\alpha}(-\mathrm{i}\xi\lambda_n s^\alpha)\bigr|\\
		\le & \frac{C_0}{1+|\xi|\lambda_{p}s^{\alpha}}s^{\alpha-1}\\
		\le& C_0\,s^{\alpha-1}.
	\end{align*}
	Then, 
	\begin{align*}
		&\sup_{n\ge p}|k_n(s)|
		\le C_0\,s^{\alpha-1} \in L^1(0,\tau),\\
		& \sup_{n\ge p}|k_n(s)| \longrightarrow 0 \qquad \text{as } p \to\infty.
	\end{align*}
	Hence, by the dominated convergence theorem,
	\[
	\int_0^T \sup_{n\ge p} |k_n(\tau)|\, d\tau
	\;\xrightarrow[p\to\infty]{}\; 0.
	\]
	Combining this with \eqref{estkn} yields,
	$$
	\lim _{p \rightarrow \infty} \sup_{t \in [0, \tau]}\left\|\sum_{n=p}^{p+q} y_n(t) e_n\right\|_{L^2(G)}=0\qquad \text{for all }\; q\in\mathbb{N}.
	$$
	Thus, for any $\tau>0$ the series $\displaystyle\sum_{n \geq 0} y_n(t) e_n$ converges uniformly in $t \in [0, \tau]$ to $t\mapsto\widetilde{y}(t)=\displaystyle\sum_{n =0}^{\infty} y_n(t) \in C\left([0,\tau], L^2(G)\right)$. Hence $\widetilde{y}\in C\left(\mathbb{R}^+, L^2(G)\right)$.
	Moreover, we obtain \eqref{stabest1} for $\widetilde{y}$.
	
	On the other hand, by the Laplace transform, we obtain
	$$\partial^{\alpha}_{0,t} y_n(t)=-\mathrm{i}\xi\lambda_n y_n(t) + u_n(t).$$
	Then, we infer that
	\begin{equation*}
		|\partial^{\alpha}_{0,t} y_n(t)| \le |\xi| \lambda_n |y_n(t)|+|u_n(t)|,
	\end{equation*}
	which implies that
	\begin{align*}
		\|\partial^{\alpha}_{0,t} \widetilde{y}\|^2_{H^{-1}(G)} &=\sum_{n=0}^\infty \frac{1}{\lambda_n^2}|\partial^{\alpha}_{0,t} y_n(t)|^2 \\
		& \le 2\sum_{n=0}^\infty |\xi|^2|y_{n}(t)|^2 + 2\sum_{n=0}^\infty \frac{1}{\lambda_n^2}\left|u_n(t)\right|^2\\
		&\le C \|y_0\|^2_{L^2(G)} + C\|u\|^2_{L^\infty(0,T;L^2(G))}.
	\end{align*}
	Hence, $y=\widetilde{y}\big\rvert_{(0,T)\times G}$ is a weak solution of \eqref{sys:frac Schr} belonging to $C([0,T];L^2(G))$ and satisfying \eqref{stabest1}-\eqref{stabest2}. The uniqueness of the solution can be obtained via the Laplace transform.
\end{proof}

To formulate the backward system corresponding to \eqref{sys:frac Schr}, we consider a smooth function $z=z(t,x)$.
\begin{align*}
	& \int_{0}^{T}\langle \partial^{\alpha}_{0,t} y(t,\cdot)-\xi \mathrm{i} \Delta y(t,\cdot), z(t,\cdot)\rangle_{H^{-1}(G), H^{1}_{0}(G)} \mathrm{d} t\\
	&= \int_{0}^{T}\langle \partial^{\alpha}_{0,t} y(t,\cdot), z(t,\cdot)\rangle_{H^{-1}(G), H^{1}_{0}(G)} \mathrm{d} t-\int_{0}^{T}\langle \xi \mathrm{i} \Delta y(t,\cdot), z(t,\cdot)\rangle_{H^{-1}(G), H^{1}_{0}(G)} \mathrm{d} t \\
	&= \left[\langle y(t,\cdot), (I^{1-\alpha}_{t,T}z)(t,\cdot) \rangle_{L^2(G)}\right]_{t=0}^{t=T} +\int_{0}^{T} \langle y(t,\cdot), D^{\alpha}_{t,T}z(t,\cdot)\rangle_{H^{-1}(G), H^{1}_{0}(G)} \mathrm{d} t\\
	& +\int_{0}^{T}\langle  y(t,\cdot), \xi \mathrm{i} \Delta z(t,\cdot)\rangle_{H^{-1}(G), H^{1}_{0}(G)} \mathrm{d} t\\
	& = \left[\langle y(t,\cdot), (I^{1-\alpha}_{t,T}z)(t,\cdot) \rangle_{L^2(G)}\right]_{t=0}^{t=T} +\int_{0}^{T} \langle y(t,\cdot), D^{\alpha}_{t,T}z(t,\cdot)+\xi \mathrm{i} \Delta z(t,\cdot)\rangle_{H^{-1}(G), H^{1}_{0}(G)} \mathrm{d} t.
\end{align*}
So, we consider the following adjoint system
\begin{equation}\label{sys:adjSchr}
	\begin{cases}
		D^{\alpha}_{t,T} z +\xi  \mathrm{i} \Delta z=0,& \mbox{ on }Q_T,\\
		z=0, & \mbox{ on }\Sigma_T,\\
		I^{1-\alpha}_{t,T} z|_{t=T}=z_T, & \mbox{ in } G,
	\end{cases}
\end{equation}
where $D^{\alpha}_{t,T}$ (resp. $I^{1-\alpha}_{t,T}$) denotes the right Riemann-Liouville time fractional derivative of order $\alpha$
(resp. the right Riemann-Liouville time fractional integral of order $1-\alpha$) as introduced in Subsection \ref{FDI}.\\
As for the well-posedness of the adjoint system \eqref{sys:adjSchr}, we have the following result:
\begin{prop}
	Let $\alpha\in (0,1)$, $T>0,$ $\xi\in \mathbb{R}^*$ a nonzero real and $z_T\in L^2(G)$. Then, the adjoint system \eqref{sys:adjSchr} admits a unique solution $z$ such that:
	\begin{align}
		z(t,\cdot)=\sum_{n=0}^{\infty} (T-t)^{\alpha-1}E_{\alpha,\alpha}(-\mathrm{i}\xi\lambda_n (T-t)^{\alpha})) \left\langle z_T, e_n\right\rangle e_n, \label{sol:zsum}\\
		I^{1-\alpha}_{t,T} z(t,\cdot)=\sum_{n=0}^{\infty} E_{\alpha,1}(-\mathrm{i}\xi\lambda_n (T-t)^{\alpha})) \left\langle z_T, e_n\right\rangle e_n,\label{sol:zsumrev}
	\end{align}
	and there exists a constant $C>0$ (independent of $z_T$) such that:
	\begin{equation*}
		\|I^{1-\alpha}_{t,T} z\|_{C([0,T];L^2(G))} \le C \|z_T\|_{L^2(G)}.
	\end{equation*}
	Moreover,
	\begin{itemize}
		\item[(i)] $z\in C([0,T); H^2(G)\cap H^1_0(G))$ and $D^{\alpha}_{t,T}z \in C([0,T); L^2(G))$.
		\item[(ii)] If $z_T\in H^1_0(G)$, then $z\in L^1(0,T; H^1_0(G))$, and if $z_T\in D((-\Delta)^{3/2})$, then $D^{\alpha}_{t,T}z \in L^1(0,T; H^1_0(G))$. In addition, there exists a constant $C>0$ (independent of $z_T$) such that
		\begin{align*}
			\|z\|_{L^1(0,T; H^1_0(G))} &\le C \|z_T\|_{H^1_0(G)},\\
			\|D^{\alpha}_{t,T}z\|_{L^1(0,T; H^1_0(G))} &\le C \|z_T\|_{D((-\Delta)^{3/2})}.
		\end{align*}
		%\item[(iii)] $z\colon [0,T) \to L^2(G)$ is analytically extended to the half-plane $\{\eta\in \mathbb{C}\colon \mathrm{Re}~\eta<T\}$.
	\end{itemize}
\end{prop}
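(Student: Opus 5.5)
The proof follows the forward case (Proposition \ref{FP}): we work mode by mode, define the candidate solution by the series \eqref{sol:zsum}, and check it. For each $n$ set $z_n(t)=(T-t)^{\alpha-1}E_{\alpha,\alpha}(-\mathrm{i}\xi\lambda_n (T-t)^\alpha)z_{T,n}$ with $z_{T,n}=\langle z_T,e_n\rangle$. After the change of variable $\tau=T-t$, the right Riemann--Liouville operators become left ones. The standard Laplace-transform identities $I^{1-\alpha}_{0,\tau}\big(\tau^{\alpha-1}E_{\alpha,\alpha}(-c\tau^\alpha)\big)=E_{\alpha,1}(-c\tau^\alpha)$ and $D^{\alpha}_{0,\tau}\big(\tau^{\alpha-1}E_{\alpha,\alpha}(-c\tau^\alpha)\big)=-c\,\tau^{\alpha-1}E_{\alpha,\alpha}(-c\tau^\alpha)$, applied with $c=\mathrm{i}\xi\lambda_n$, then give \eqref{sol:zsumrev} termwise. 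They also show that $D^\alpha_{t,T}z_n=\mathrm{i}\xi\lambda_n z_n$, which is exactly $D^\alpha_{t,T}z+\xi\mathrm{i}\Delta z=0$ on the $n$-th mode, and that $I^{1-\alpha}_{t,T}z_n(T)=E_{\alpha,1}(0)z_{T,n}=z_{T,n}$. For uniqueness, any solution, tested against $e_n$, satisfies the scalar problem. Reducing to zero data, a Laplace transform in $\tau$ gives $(s^\alpha+\mathrm{i}\xi\lambda_n)\hat w=0$, hence $w\equiv0$.

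Convergence and the stability bound come from Lemma \ref{MLbd}. For \eqref{sol:zsumrev}, $|E_{\alpha,1}(-\mathrm{i}\xi\lambda_n(T-t)^\alpha)|\le C_0$, so the series converges in $L^2(G)$ uniformly in $t\in[0,T]$, exactly as for $y_0$ in Proposition \ref{FP}. This yields $\|I^{1-\alpha}_{t,T}z\|_{C([0,T];L^2(G))}\le C_0\|z_T\|_{L^2(G)}$.

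For (i), fix $t<T$ and write $\tau=T-t>0$. Lemma \ref{MLbd} gives
\[
\lambda_n|z_n(t)|\le \frac{C_0\lambda_n\tau^{\alpha-1}}{1+|\xi|\lambda_n\tau^{\alpha}}|z_{T,n}|\le \frac{C_0}{|\xi|}\tau^{-1}|z_{T,n}|,
\]
and here $\xi\neq0$ is essential. Hence $\sum\lambda_n^2|z_n(t)|^2\le C\tau^{-2}\|z_T\|^2$ uniformly on $[0,T-\delta]$. Since $\|\cdot\|_{H^2\cap H^1_0}$ is equivalent to $\|\Delta\cdot\|_{L^2}$ on a Lipschitz domain, or working in $D(-\Delta)$ with that equivalence where it holds, we get uniform convergence in $D(-\Delta)$ on $[0,T-\delta]$ and therefore $z\in C([0,T);D(-\Delta))$. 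The identity $D^\alpha_{t,T}z=-\xi\mathrm{i}\Delta z$ then gives $D^\alpha_{t,T}z\in C([0,T);L^2(G))$. To justify commuting $D^\alpha_{t,T}$ with the sum, I would use that $I^{1-\alpha}_{t,T}z$ has termwise derivative $-\mathrm{i}\xi\lambda_n z_n$, and this series converges locally uniformly on $[0,T)$.

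For (ii), the $H^1_0$ norm is $\sum\lambda_n|z_n|^2$. Using only $|E_{\alpha,\alpha}|\le C_0$ gives $\|z(t)\|_{H^1_0}\le C_0(T-t)^{\alpha-1}\|z_T\|_{H^1_0}$, which is integrable, so $\|z\|_{L^1(0,T;H^1_0)}\le C_0\frac{T^\alpha}{\alpha}\|z_T\|_{H^1_0}$. Similarly, $D^\alpha_{t,T}z=\mathrm{i}\xi\sum\lambda_n z_n e_n$ has $H^1_0$ norm at most $C_0|\xi|(T-t)^{\alpha-1}\big(\sum\lambda_n^3|z_{T,n}|^2\big)^{1/2}=C(T-t)^{\alpha-1}\|z_T\|_{D((-\Delta)^{3/2})}$, which integrates to the second estimate. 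Measurability and Bochner integrability follow from the locally uniform convergence established in (i).

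The main obstacle is rigorously passing the fractional derivative through the infinite sum near $t=T$, where the kernel $(T-t)^{\alpha-1}$ blows up. I would handle this by working with $I^{1-\alpha}_{t,T}z$, which is continuous up to $T$, and computing its classical derivative on $[0,T-\delta]$.
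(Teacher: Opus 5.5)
\paragraph{Sign error in the existence step.} Your framework is the adaptation of \cite[Proposition 4.1]{fujishiro2014} that the paper cites without writing out: a modal expansion, the bounds of Lemma \ref{MLbd}, and uniqueness via the Laplace transform. Your estimates for the stability bound, (i) and (ii) are correct. The step that fails is the check of the equation. With $c=\mathrm{i}\xi\lambda_n$, the identity you quote gives
\[
D^{\alpha}_{t,T}z_n=-c\,z_n=-\mathrm{i}\xi\lambda_n z_n .
\]
But $D^{\alpha}_{t,T}z+\xi\mathrm{i}\Delta z=0$ together with $\Delta e_n=-\lambda_n e_n$ requires $D^{\alpha}_{t,T}z_n=+\mathrm{i}\xi\lambda_n z_n$. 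So the series \eqref{sol:zsum} solves $D^{\alpha}_{t,T}z-\xi\mathrm{i}\Delta z=0$, not \eqref{sys:adjSchr}. The case $\alpha=1$ shows this at once: $z_n=e^{-\mathrm{i}\xi\lambda_n(T-t)}z_{T,n}$ gives $-z_n'=-\mathrm{i}\xi\lambda_n z_n$.

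Your uniqueness computation $(s^\alpha+\mathrm{i}\xi\lambda_n)\hat w=0$ is in fact the transform of $D^\alpha w=-\mathrm{i}\xi\lambda_n w$. So your existence and uniqueness parts concern two different equations. As written, the identity you claim to verify is false, and no argument can confirm it.

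\paragraph{The fix.} The displayed formulas in the statement carry a sign typo. The solution of \eqref{sys:adjSchr} is
$z(t)=\sum_n (T-t)^{\alpha-1}E_{\alpha,\alpha}(\mathrm{i}\xi\lambda_n(T-t)^\alpha)\langle z_T,e_n\rangle e_n$, with
$I^{1-\alpha}_{t,T}z(t)=\sum_n E_{\alpha,1}(\mathrm{i}\xi\lambda_n(T-t)^\alpha)\langle z_T,e_n\rangle e_n$. The sesquilinear duality identity \eqref{eq: carcontrol} forces the same choice, since $\overline{E_{\alpha,1}(-\mathrm{i}\xi\lambda_nT^\alpha)}=E_{\alpha,1}(\mathrm{i}\xi\lambda_nT^\alpha)$ (the Taylor coefficients are real). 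In other words, the adjoint modes are the complex conjugates of the forward ones. You should flag this and prove the corrected statement, or equivalently the stated one with $\xi$ replaced by $-\xi$.

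After that change, every estimate in your proposal goes through verbatim, because Lemma \ref{MLbd} is symmetric in the sign of its argument. The paper's later use is also unaffected. Indeed, $\varphi^{(\xi)}_{\alpha,\beta}(-s)=\overline{\varphi^{(\xi)}_{\alpha,\beta}(s)}$, and since the $e_n$ are real, the observability inequalities are invariant under $z_T\mapsto\overline{z_T}$.

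\paragraph{A smaller point.} Your aside that $\|\cdot\|_{H^2\cap H^1_0}$ is equivalent to $\|\Delta\cdot\|_{L^2}$ on every Lipschitz domain is not true. With re-entrant corners, $D(-\Delta)$ is strictly larger than $H^2(G)\cap H^1_0(G)$. Your argument therefore yields $z\in C([0,T);D(-\Delta))$; the $H^2$ claim in (i) needs extra boundary regularity, such as convexity or a $C^{1,1}$ boundary.
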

The proof is an adaptation of \cite[Proposition 4.1]{fujishiro2014}, so it is omitted.

We now introduce the following notions of averaged controllability. In this paper we focus on controls belonging to $L^\infty$, as they better reflect the underlying physical interpretation compared to controls in $L^2$:
\begin{defn}\label{def-ex-con}
	System \eqref{sys:frac Schr} is exactly averaged controllable in $L^2(G)$ with control cost $\mathscr{C}_{ex}=\mathscr{C}_{ex}(G, G_0, \xi , T)$ (positive constant) if for all $y_0, y_1\in L^2(G)$, there exists a control $u\in L^\infty((0,T)\times G_{0})$ such that
	\begin{equation}\label{def-ex-con-eq1}
		\|u\|_{L^\infty((0,T)\times G_0)}\leq \mathscr{C}_{ex}(\|y_0\|_{L^2(G)} + \|y_1\|_{L^2(G)})
	\end{equation}
	and the average solution to \eqref{sys:frac Schr} satisfies
	\begin{equation*}
		\mathbb{E}(y(T,\cdot;\xi ; y_0; u))=y_1.
	\end{equation*}
\end{defn}

\begin{defn} 
	System \eqref{sys:frac Schr} is null averaged controllable in $L^2(G)$ with control cost \\$\mathscr{C}_{null}=\mathscr{C}_{null}(G,G_0, \xi , T)$ if for all $y_0\in L^2(G)$, there exists a control $u\in L^\infty((0,T)\times G_0)$ such that
	\begin{equation}\label{est:connull}
		\|u\|_{L^\infty((0,T)\times G_0)}\leq \mathscr{C}_{null}\|y_0\|_{L^2(G)}
	\end{equation}
	and the average solution to \eqref{sys:frac Schr} satisfies
	\begin{equation}
		\mathbb{E}(y(T,\cdot;\xi ; y_0; u))=0. \label{nullaver}
	\end{equation}
\end{defn}

The above controllability concepts have corresponding dual formulations in terms of observability for the adjoint system \eqref{sys:adjSchr}.

\begin{defn} 
	System \eqref{sys:adjSchr} is exactly averaged observable in $L^2(G)$ with observability cost $\mathscr{C}_{exob}=\mathscr{C}_{exob}(G,G_0, \xi , T)$ if for all $z_T\in L^2(G)$, the following inequality holds
	\begin{equation}\label{est:exact_obs}
		\left\|z_T\right\|_{L^2(G)}\leq \mathscr{C}_{exob}\int_0^T\int_{G_0}|\mathbb{E}( z(t,x;\xi ;z_T))|\mathrm{d} x \mathrm{d} t.
	\end{equation}
\end{defn}

\begin{defn} 
	System \eqref{sys:adjSchr} is null averaged observable in $L^2(G)$ with observability cost $\mathscr{C}_{ob}=\mathscr{C}_{ob}(G,G_0, \xi , T)$ if for all $z_T\in L^2(G)$, the following inequality holds
	\begin{equation}\label{est:obs}
		\left\|\mathbb{E}(I^{1-\alpha}_{t,T} z(0,\cdot;\xi; z_{T})) \right\|_{L^2(G)}\leq \mathscr{C}_{ob}\int_0^T\int_{G_0}|\mathbb{E}( z(t,x;\xi ;z_T))|\mathrm{d} x \mathrm{d} t.
	\end{equation}
\end{defn}

\begin{prop} \label{Pr: obser_null cont}
	System \eqref{sys:frac Schr} is null averaged (resp. exactly) controllable if and only if the adjoint problem \eqref{sys:adjSchr} is null (resp. exactly) averaged observable. Moreover, the optimal controllability and observability costs are linked by
	\begin{eqnarray}
		\mathscr{C}_{null}=\mathscr{C}_{ob}\quad (\text{resp.}\; \mathscr{C}_{ex}=\mathscr{C}_{exob}). \label{optimal}
	\end{eqnarray}
\end{prop}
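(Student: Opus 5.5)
The plan is the classical Hilbert Uniqueness Method in its $L^\infty$–$L^1$ form: duality between $L^\infty((0,T)\times G_0)$ and $L^1((0,T)\times G_0)$, combined with Hahn--Banach. The first step is a duality identity for the averaged states. For fixed $\xi\neq 0$ we apply the integration by parts formula \eqref{IBPF} (extended by density, as in the remark following it) to $y=y(\cdot;\xi;y_0;u)$ and $z=z(\cdot;\xi;z_T)$. Since $y$ solves \eqref{sys:frac Schr} and $z$ solves \eqref{sys:adjSchr}, this gives
\begin{equation*}
\langle y(T),z_T\rangle_{L^2(G)}-\langle y_0,(I^{1-\alpha}_{t,T}z)(0)\rangle_{L^2(G)}=\int_0^T\!\!\int_{G_0}u\,\overline{z}\,\mathrm{d}x\,\mathrm{d}t .
\end{equation*}
The same identity can be checked directly from the series \eqref{solution formula}, \eqref{sol:zsum} and \eqref{sol:zsumrev}; one only needs care with conjugation of $E_{\alpha,\beta}(-\mathrm{i}\xi\lambda_n t^\alpha)$, and we pair with the adjoint having the sign that makes the kernels match. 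We then take expectations. This is legitimate because Lemma \ref{MLbd} gives bounds uniform in $\xi$: $|E_{\alpha,1}|\le C_0$ and $|(T-t)^{\alpha-1}E_{\alpha,\alpha}|\le C_0(T-t)^{\alpha-1}$, which is integrable. Fubini then applies, and since $u$, $y_0$ and $z_T$ do not depend on $\omega$,
\begin{equation*}
\langle \mathbb{E}y(T),z_T\rangle-\langle y_0,\mathbb{E}(I^{1-\alpha}_{t,T}z)(0)\rangle=\int_0^T\!\!\int_{G_0}u\,\overline{\mathbb{E}z}\,\mathrm{d}x\,\mathrm{d}t .
\end{equation*}

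Controllability implies observability. Assume null averaged controllability, fix $z_T$, and choose $u$ driving $\mathbb{E}y(T)$ to $0$ for the initial datum $y_0=-\mathbb{E}(I^{1-\alpha}_{t,T}z)(0)$. The identity together with \eqref{est:connull} gives
\begin{equation*}
\|y_0\|^2\le \|u\|_{L^\infty}\|\mathbb{E}z\|_{L^1((0,T)\times G_0)}\le \mathscr{C}_{null}\|y_0\|\,\|\mathbb{E}z\|_{L^1},
\end{equation*}
which is \eqref{est:obs} with $\mathscr{C}_{ob}\le\mathscr{C}_{null}$. For the exact case, given $z_T$ we choose targets $y_0=0$ and $y_1=z_T$. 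Then $\|z_T\|^2=\int u\,\overline{\mathbb{E}z}\le \mathscr{C}_{ex}\|z_T\|\,\|\mathbb{E}z\|_{L^1}$.

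Observability implies controllability. For the null case, consider the subspace $X=\{\mathbb{E}z(\cdot;\xi;z_T)|_{(0,T)\times G_0}: z_T\in L^2(G)\}\subset L^1((0,T)\times G_0)$ and define on it
\begin{equation*}
\Lambda(\mathbb{E}z)=-\langle y_0,\mathbb{E}(I^{1-\alpha}_{t,T}z)(0)\rangle .
\end{equation*}
The map $\Lambda$ is well defined, because by \eqref{est:obs} two terminal data with the same restricted averaged adjoint give the same value. It is bounded with norm at most $\mathscr{C}_{ob}\|y_0\|$. By Hahn--Banach we extend $\Lambda$ to $L^1$ with the same norm and represent it by some $u\in L^\infty((0,T)\times G_0)$ with $\|u\|_\infty\le\mathscr{C}_{ob}\|y_0\|$. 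Since $L^1$ is complex, we use the complex Hahn--Banach theorem together with the sesquilinear pairing. Plugging this $u$ into the averaged identity gives $\langle\mathbb{E}y(T),z_T\rangle=0$ for all $z_T$, hence $\mathbb{E}y(T)=0$ and $\mathscr{C}_{null}\le\mathscr{C}_{ob}$. The exact case is analogous, using the functional $\mathbb{E}z\mapsto\langle y_1,z_T\rangle-\langle y_0,\mathbb{E}(I^{1-\alpha}_{t,T}z)(0)\rangle$. This functional is bounded by $\mathscr{C}_{exob}\,C(\|y_0\|+\|y_1\|)$, where $C$ comes from the uniform bound $\|\mathbb{E}(I^{1-\alpha}_{t,T}z)(0)\|\le C_0\|z_T\|$. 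To get exactly $\mathscr{C}_{ex}=\mathscr{C}_{exob}$, we need to take some care with the norm on the data, for instance by normalizing via the sum norm.

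The main obstacle is the first step: justifying the duality identity for $L^2$ data with only $L^\infty$ controls, given that the regularity hypotheses of \eqref{IBPF} are only met for smooth $z_T$. I would first prove the identity for $z_T$ a finite combination of the $e_n$, where every term is explicit, and $y_0$ arbitrary, computing coefficientwise with the Mittag--Leffler series. I would then pass to general $z_T$ using the uniform-in-$\xi$ bounds from Lemma \ref{MLbd} and dominated convergence, both in $(t,x)$ and in $\mu_\xi$. The same bounds make all the expectations finite.
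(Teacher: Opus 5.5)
Your proposal follows the paper's proof essentially step by step. It uses the same three ingredients. First, the averaged duality identity \eqref{eq: carcontrol}, obtained for regular $z_T$ and extended by density with the uniform bounds of Lemma \ref{MLbd}. Second, the choice $y_0=-\mathbb{E}(I^{1-\alpha}_{t,T}z(0,\cdot;\xi;z_T))$, which amounts to the paper's supremum over $\|y_0\|_{L^2(G)}\le 1$. Third, Hahn--Banach on the subspace of averaged adjoints in $L^1((0,T)\times G_0)$, followed by the $L^1$--$L^\infty$ representation. The null case, including $\mathscr{C}_{null}=\mathscr{C}_{ob}$, is complete. Your caution about the sign inside $E_{\alpha,\alpha}$ in the adjoint kernel is harmless for the observability inequality: replacing $z_T$ by $\overline{z_T}$ and conjugating exchanges the two conventions, since the $e_n$ are real.

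The one point that does not close as written is the equality $\mathscr{C}_{ex}=\mathscr{C}_{exob}$. You bound the functional $\mathbb{E}z\mapsto\langle y_1,z_T\rangle-\langle y_0,\mathbb{E}(I^{1-\alpha}_{t,T}z(0,\cdot;\xi;z_T))\rangle$ using the constant $C_0$ of Lemma \ref{MLbd}. That constant is at least $1$, because $E_{\alpha,1}(0)=1$. So you only obtain $\mathscr{C}_{exob}\le\mathscr{C}_{ex}\le C_0\,\mathscr{C}_{exob}$. Changing the normalization of the data cannot remove this factor, since the cost in Definition \ref{def-ex-con} is tied to $\|y_0\|_{L^2(G)}+\|y_1\|_{L^2(G)}$.

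The missing ingredient is the sharp bound $|\varphi^{(\xi)}_{\alpha,1}(s)|\le 1$, i.e.\ $|E_{\alpha,1}(\mathrm{i}x)|\le E_{\alpha,1}(0)=1$ for $x\in\mathbb{R}$. This is the remark after Lemma \ref{lm:van der Corput}, which follows from $|E_{\alpha,\beta}(z)|\le E_{\alpha,\beta}(\mathrm{Re}\,z)$ for $0<\alpha\le 1$, $\beta\ge\alpha$. It gives $\|\mathbb{E}(I^{1-\alpha}_{t,T}z(0,\cdot;\xi;z_T))\|_{L^2(G)}\le\|z_T\|_{L^2(G)}$. The functional is then bounded by $(\|y_0\|+\|y_1\|)\|z_T\|\le\mathscr{C}_{exob}(\|y_0\|+\|y_1\|)\,\|\mathbb{E}z\|_{L^1((0,T)\times G_0)}$, and Hahn--Banach yields $\mathscr{C}_{ex}\le\mathscr{C}_{exob}$. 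The paper itself only says that a similar argument applies in the exact case, so this is precisely the detail it leaves implicit.
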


% \begin{remark}
	% The result \eqref{optimal} 
	% is a side result obtained within the proof, which, even if we do not use it in this paper, may have some applications in optimal control theory.  
	% \end{remark}

% Proposition \ref{Pr: obser_null cont} was stated in an abstract setting in \cite[Theorem A.2]{lu2016averaged} without proof. However, it might be proved with some adaptation of the Hilbert Uniqueness Method. This method dates back to  \cite{lions1988controlabilite,russell1978controllability}, 
% and, in particular, for the averaged controllability problem, it dates back to \cite[Theorem 3]{zuazua2014averaged}
% and \cite[Theorem A.1]{lu2016averaged}. Since some modifications  from the literature are needed (see Remark \ref{rk:diffproof}),  we sketch the proof, pointing out the main novelties: 

\begin{proof}
	Using the fractional integration by parts \eqref{IBPF}, we can prove that, for a fixed $u\in L^\infty((0,T)\times G_0)$, $y_0\in L^2(G)$ and $z_T\in L^2(G)$ (if necessary, we first consider 
	$z_T\in D((-\Delta)^{\frac{3}{2}})$ and then pass by a density argument): 
	\begin{equation}
		\begin{aligned}
			\left\langle \mathbb{E}(y(T,\cdot;\xi; y_0; u)),z_T\right\rangle_{L^2(G)}&-\left\langle y_0, \mathbb{E}(I^{1-\alpha}_{t,T}z(0,\cdot;\xi;z_T))\right\rangle_{L^2(G)}
			\\&=\mathbb{E} \left[\langle y(T,\cdot;\xi; y_0; u),z_T\rangle-\langle y_0, I^{1-\alpha}_{t,T}z(0,\cdot;\xi;z_T)\rangle\right]
			\\&=
			\mathbb{E} \left[\int_0^T\int_{G_0} u(t,x)\overline{z(t,x;\xi;z_T)}\mathrm{d} x\mathrm{d} t \right]
			\\&
			=\int_0^T\int_{G_0} u(t,x)\overline{\mathbb{E}(z(t,x;\xi;z_T))}\mathrm{d} x\mathrm{d} t. \label{eq: carcontrol}
		\end{aligned} 
	\end{equation}
	
	Let us suppose that \eqref{sys:frac Schr} is null averaged controllable. Let $u\in L^\infty((0,T)\times G_0)$ such that \eqref{est:connull} and \eqref{nullaver} are satisfied. Applying \eqref{eq: carcontrol} to this control, we obtain 
	\begin{equation*}
		-\left\langle y_0, \mathbb{E}(I^{1-\alpha}_{t,T}z(0,\cdot;\xi;z_T))\right\rangle_{L^2(G)}
		=\int_0^T\int_{G_0} u(t,x)\overline{\mathbb{E}(z(t,x;\xi;z_T))}\mathrm{d} x\mathrm{d} t \qquad \forall z_T\in L^2(G).
	\end{equation*}
	Using Hölder inequality and \eqref{est:connull}, we arrive at 
	\begin{eqnarray*}
		\|\mathbb{E}(I^{1-\alpha}_{t,T}z(0,\cdot;\xi;z_T))\|_{L^2(G)}&=&\sup_{\|y_{0}\|\leq 1}\left|\left\langle y_0, \mathbb{E}(I^{1-\alpha}_{t,T}z(0,\cdot;\xi;z_T))\right\rangle_{L^2(G)}\right|\\
		&\leq & \mathscr{C}_{null}\int_0^T\int_{G_0} |\mathbb{E}(z(t,x;\xi;z_T))|\mathrm{d} x\mathrm{d} t \qquad \forall z_T\in L^2(G).
	\end{eqnarray*}
	This shows the observability inequality \eqref{est:obs} with $\mathscr{C}_{ob}\leq \mathscr{C}_{null}$.
	
	Conversely, let us suppose that the observability estimate \eqref{est:obs} is satisfied.
	We consider the following linear subspace $\mathcal{F}$ of $L^{1}((0,T)\times G_0)$:
	\begin{eqnarray*}
		\mathcal{F}:=\left\{\mathds{1}_{G_0}\mathbb{E}(z(\cdot,\cdot;\xi;z_{T})),\quad z_{T}\in L^2(G)\right\},
	\end{eqnarray*}
	and the linear functional $\Phi$ on $\mathcal{F}$ denied by
	\begin{eqnarray*}
		\Phi(\mathds{1}_{G_0}\mathbb{E}(z(\cdot,\cdot;\xi;z_{T}))):=-\left\langle y_{0}, \mathbb{E}(I^{1-\alpha}_{t,T}z(0,\cdot;\xi;z_{T})) \right\rangle_{L^2(G)}.
	\end{eqnarray*}
	Using the observability inequality \eqref{est:obs}, we obtain that $\Phi$ is well defined and bounded on $\mathcal{F}$ with norm satisfying $$\|\Phi\|_{\mathcal{F}^{\prime}}\leq \mathscr{C}_{ob}\|y_{0}\|_{L^2(G)}.$$
	Then, by Hahn–Banach Extension Theorem, we can extend $\Phi$ to a bounded linear functional $\widetilde{\Phi}$ on $L^{1}((0,T)\times G_0)$ having the same norm. The Riesz Representation Theorem yields the existence of $u\in L^{\infty}((0,T)\times G_0)$ such that for any $v\in L^{1}((0,T)\times G_0)$,
	\begin{eqnarray*}
		\widetilde{\Phi}(v)=\left\langle u,v \right\rangle_{L^{\infty}((0,T)\times G_0),\; L^1((0,T)\times G_0)},
	\end{eqnarray*}
	implying that
	\begin{eqnarray*}
		-\left\langle y_{0}, \mathbb{E}(I^{1-\alpha}_{t,T}z(0,\cdot;\xi;z_{T}))\right\rangle_{L^2(G)}=\int_0^T\int_{G_0} u(t,x)\overline{\mathbb{E}(z(t,x;\xi;z_T))}\mathrm{d} x \mathrm{d} t,
	\end{eqnarray*}
	which, combined with \eqref{eq: carcontrol}, yields that the control $u$ satisfies
	\[\left\langle \mathbb{E}(y(T,\cdot;\xi; y_0; u)),z_T\right\rangle_{L^2(G)}=0 \qquad \forall z_T\in L^2(G).\]
	This shows that the average solution takes $0$ at time $T$. Finally, according to the Riesz Representation Theorem, we have
	\begin{equation*}
		\|u\|_{L^\infty((0,T)\times G_0)}=\|\widetilde{\Phi}\|_{(L^1((0,T)\times G_0))^{\prime}}=\|\Phi\|_{\mathcal{F}^{\prime}}\leq \mathscr{C}_{ob}\|y_0\|_{L^2(G)},
	\end{equation*}
	showing that the average solution can be controlled at a cost $\mathscr{C}_{null}\leq \mathscr{C}_{ob}$.\\
	A similar argument applies in the case of exact controllability.
\end{proof}

\begin{rmk}
	The null averaged (resp. exact) observability of \eqref{sys:adjSchr} is equivalent to proving that: there is $C>0$ 
	such that the solution of the adjoint system \eqref{sys:adjSchr} satisfies: for all $z_T\in L^2(G)$, we have
	\begin{equation*}
		\left\|\sum_{n=0}^{\infty} \varphi_{\alpha,1}^{(\xi)}(-\lambda_n T^{\alpha}) \left\langle z_T, e_n\right\rangle e_n\right\|_{L^2(G)}\leq C\int_0^T\int_{G_0}\left|\sum_{n=0}^{\infty} \varphi_{\alpha,\alpha}^{(\xi)}(-\lambda_n (T-t)^{\alpha})) \left\langle z_T, e_n\right\rangle e_n\right|\mathrm{d} x \mathrm{d} t, 
	\end{equation*}
	\begin{equation*}
		\left(\text{resp.} \left\|z_T\right\|_{L^2(G)}\leq C\int_0^T\int_{G_0}\left|\sum_{n=0}^{\infty} \varphi_{\alpha,\alpha}^{(\xi)}(-\lambda_n (T-t)^{\alpha}) \left\langle z_T, e_n\right\rangle e_n\right|\mathrm{d} x \mathrm{d} t\right),
	\end{equation*}
	where $\varphi_{\alpha,\beta}^{(\xi)}$ is the two-parameter fractional characteristic function defined in \eqref{def:fractional charfunc}.
\end{rmk}

\section{Lack of simultaneous null controllability}\label{Sec3}
Simultaneous null controllability fails for any absolutely continuous random variable. In fact, the set of realizations for which such a property holds is negligible, as follows from the next theorem.
\begin{theorem}\label{thm:nonsimcon}
	Let $G\subset\mathbb{R}^d$ be a Lipschitz domain, $G_0\subset G$ be a subset of positive measure, $T>0$, $y_0\in L^2(G)\setminus\{0\}$ and
	$u\in L^\infty((0,T)\times G_0)$. Then, the set:
	\begin{eqnarray*}
		\mathscr{E}:=\left\{\xi\in\mathbb{R}\;:\; y(T,\cdot;\xi;y_0;u)=0\right\}
	\end{eqnarray*}
	is countable.
\end{theorem}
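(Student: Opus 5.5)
The plan is to work coefficientwise and use analyticity in the parameter $\xi$. By the representation \eqref{solution formula} of Proposition \ref{FP}, for every $n\in\mathbb{N}$ we have $\langle y(T,\cdot;\xi;y_0;u),e_n\rangle=c_n(\xi)$, where
\begin{equation*}
c_n(\xi):=y_{0,n}E_{\alpha,1}(-\mathrm{i}\xi\lambda_nT^\alpha)+\int_0^T u_n(s)(T-s)^{\alpha-1}E_{\alpha,\alpha}(-\mathrm{i}\xi\lambda_n(T-s)^\alpha)\,\mathrm{d}s ,
\end{equation*}
with $y_{0,n}=\langle y_0,e_n\rangle$ and $u_n(s)=\langle \mathds{1}_{G_0}u(s,\cdot),e_n\rangle\in L^\infty(0,T)$. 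Hence $\mathscr{E}=\bigcap_{n}\{\xi\in\mathbb{R}: c_n(\xi)=0\}$.

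\textbf{Step 1: each $c_n$ is entire.} Since $y_0\neq 0$, we can fix an index $n$ with $y_{0,n}\neq0$. The formula for $c_n$ makes sense for every complex $\xi$, and I claim $c_n\in\mathcal{H}(\mathbb{C})$. Insert the power series \eqref{MLdef}. For $|\xi|\le R$, the integrand is dominated by
\begin{equation*}
\|u_n\|_\infty (T-s)^{\alpha-1}\sum_k \frac{(R\lambda_nT^\alpha)^k}{\Gamma(\alpha k+\alpha)}\in L^1(0,T).
\end{equation*}
So we may interchange sum and integral, which gives
\begin{equation*}
c_n(\xi)=\sum_{k\ge0}(-\mathrm{i}\lambda_n\xi)^k\Big[\frac{y_{0,n}T^{\alpha k}}{\Gamma(\alpha k+1)}+\frac{1}{\Gamma(\alpha k+\alpha)}\int_0^T u_n(s)(T-s)^{\alpha k+\alpha-1}\,\mathrm{d}s\Big].
\end{equation*}
This series has infinite radius of convergence. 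Consequently, if $c_n\not\equiv0$, its zeros are isolated, so $\{\xi\in\mathbb{R}:c_n(\xi)=0\}$, and therefore $\mathscr{E}$, is countable. It thus remains to exclude $c_n\equiv0$.

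\textbf{Step 2: reduction to a moment problem (the main point).} Suppose $c_n\equiv0$. Then every Taylor coefficient vanishes, so for all $k\in\mathbb{N}$
\begin{equation*}
\int_0^T u_n(T-r)\,r^{\alpha k+\alpha-1}\,\mathrm{d}r=-y_{0,n}T^{\alpha k}\frac{\Gamma(\alpha k+\alpha)}{\Gamma(\alpha k+1)} .
\end{equation*}
The key trick is to rewrite the right-hand side as a moment of the same type via the Beta integral. The Beta function gives
\begin{equation*}
\frac{\Gamma(\alpha k+\alpha)}{\Gamma(\alpha k+1)}=\frac{1}{\Gamma(1-\alpha)}\int_0^1\tau^{\alpha k+\alpha-1}(1-\tau)^{-\alpha}\,\mathrm{d}\tau .
\end{equation*}
After the substitution $r=T\tau$ this yields
\begin{equation*}
T^{\alpha k}\frac{\Gamma(\alpha k+\alpha)}{\Gamma(\alpha k+1)}=\frac{1}{\Gamma(1-\alpha)}\int_0^T r^{\alpha k+\alpha-1}(T-r)^{-\alpha}\,\mathrm{d}r .
\end{equation*}
Define
\begin{equation*}
h(r):=u_n(T-r)+\frac{y_{0,n}}{\Gamma(1-\alpha)}(T-r)^{-\alpha}\in L^1(0,T).
\end{equation*}
The vanishing of the Taylor coefficients then reads $\int_0^T h(r)\,r^{\alpha-1}r^{\alpha k}\,\mathrm{d}r=0$ for all $k\in\mathbb{N}$.

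Now change variables $\rho=r^\alpha$, so that $\mathrm{d}\rho=\alpha r^{\alpha-1}\,\mathrm{d}r$. This turns the conditions into
\begin{equation*}
\int_0^{T^\alpha} h(\rho^{1/\alpha})\,\rho^{k}\,\mathrm{d}\rho=0 \qquad \forall k\in\mathbb{N},
\end{equation*}
with $h(\rho^{1/\alpha})\in L^1(0,T^\alpha)$. By density of polynomials in $C([0,T^\alpha])$ (Weierstrass, or Müntz's theorem if one prefers to keep the exponents $\alpha k+\alpha-1$ directly), this forces $h=0$ a.e. That is,
\begin{equation*}
u_n(T-r)=-\frac{y_{0,n}}{\Gamma(1-\alpha)}(T-r)^{-\alpha}\quad\text{for a.e. } r\in(0,T).
\end{equation*}

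\textbf{Step 3: contradiction.} The left-hand side of the last identity is essentially bounded, since $|u_n|\le\|u\|_{L^\infty}|G|^{1/2}$. The right-hand side is unbounded as $r\to T^-$, because $y_{0,n}\neq0$ and $\alpha>0$. This contradiction shows $c_n\not\equiv0$, and the countability of $\mathscr{E}$ follows from Step 1.

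I expect the only delicate points to be the justification of the sum–integral interchange in Step 1 (handled by the crude bound $|E_{\alpha,\beta}(z)|\le E_{\alpha,\beta}$-type majorant series with $|z|$) and spotting the Beta-function representation in Step 2. The latter is what converts the initial-datum contribution into a singular "virtual control" $(T-r)^{-\alpha}$ that an $L^\infty$ control cannot cancel.
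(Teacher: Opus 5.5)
Your proof is correct. It shares the paper's skeleton: entireness of $\xi\mapsto\langle y(T,\cdot;\xi;y_0;u),e_n\rangle$, then identification of Taylor coefficients, giving the moment identity $\int_0^T(T-s)^{\alpha k+\alpha-1}u_n(s)\,\mathrm{d}s=-y_{0,n}T^{\alpha k}\Gamma(\alpha k+\alpha)/\Gamma(\alpha k+1)$ for all $k$. You rule this identity out by a genuinely different mechanism.

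The paper (Lemma \ref{lm:inteq}) argues only by size. Moments of a bounded function are $\mathcal{O}(T^{\alpha k}/k)$, while by Stirling's formula the right-hand side behaves like $y_{0,n}T^{\alpha k}(\alpha k)^{\alpha-1}$. That decays too slowly unless $y_{0,n}=0$. This step needs no uniqueness-of-moments result, and it would work for any right-hand side with the same asymptotics.

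You instead use the Beta integral to recognize the right-hand side \emph{exactly} as the moments of $s^{-\alpha}/\Gamma(1-\alpha)$. This amounts to the fractional Duhamel identity $E_{\alpha,1}(zT^\alpha)=\frac{1}{\Gamma(1-\alpha)}\int_0^T(T-s)^{\alpha-1}E_{\alpha,\alpha}(z(T-s)^\alpha)s^{-\alpha}\,\mathrm{d}s$. You then conclude by uniqueness of moments of an $L^1$ function. Your substitution $\rho=r^\alpha$ also shows that Weierstrass approximation suffices, so the full Müntz theorem of Lemma \ref{lm:Muntz} is not needed.

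The payoff of your route is structural. It shows that the initial datum acts as the singular source $y_0s^{-\alpha}/\Gamma(1-\alpha)$, consistent with $\partial^\alpha_{0,t}y=D^\alpha_{0,t}y-y(0)t^{-\alpha}/\Gamma(1-\alpha)$. So the only obstruction is the integrability class of the control. In particular, the hypothesis cannot be relaxed to $u\in L^p(0,T;L^2(G_0))$ with $p<1/\alpha$. If $y_0$ is supported in $G_0$, the control $u(s,x)=-y_0(x)s^{-\alpha}/\Gamma(1-\alpha)$ makes every coefficient vanish in the Duhamel formula. Hence $y(T,\cdot;\xi;y_0;u)=0$ for every $\xi\in\mathbb{R}$.
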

To prove this theorem, we need the following lemmas.
\begin{lem} \label{lm:Muntz}
	Let $\alpha>0$. Then, 
	$\operatorname{span}\{\, t^{\alpha k + \alpha - 1} : k \ge 0 \,\}$ is dense in $L^1(0,T)$.
\end{lem}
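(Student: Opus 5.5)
We reduce the claim to the classical Müntz--Szász theorem in $L^1$. The exponents are $\gamma_k:=\alpha k+\alpha-1$, $k\ge 0$, and they all satisfy $\gamma_k>-1$. The $L^1$ version of the theorem, due to Müntz and Szász and stated in e.g.\ Borwein--Erdélyi for $L^p[0,1]$ with $p\ge1$, reads as follows. If $\{\gamma_k\}$ is a sequence of distinct reals with $\gamma_k>-1$, then $\operatorname{span}\{t^{\gamma_k}\}$ is dense in $L^1(0,1)$ if and only if
\[
\sum_{k\ge0}\frac{\gamma_k+1}{(\gamma_k+1)^2+1}=\infty .
\]
Here $\gamma_k+1=\alpha(k+1)$, so the general term is $\frac{\alpha(k+1)}{\alpha^2(k+1)^2+1}\sim\frac{1}{\alpha(k+1)}$. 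This series diverges like the harmonic series, and density in $L^1(0,1)$ follows.

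To pass from $[0,1]$ to $[0,T]$ we rescale with $t=Ts$. The map $f\mapsto f(T\,\cdot)$ is a bijective bounded linear map $L^1(0,T)\to L^1(0,1)$, with norm scaled by $T^{-1}$. It sends $t^{\gamma_k}$ to $T^{\gamma_k}s^{\gamma_k}$, so it maps the span onto the span. Density is preserved under this isomorphism.

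If one prefers to avoid citing the $L^1$ form directly, there is an alternative route through duality. By Hahn--Banach, it suffices to show that any $h\in L^\infty(0,T)$ with $\int_0^T t^{\alpha k+\alpha-1}h(t)\,\mathrm dt=0$ for all $k$ vanishes. Substituting $s=t^\alpha$ turns this into $\int_0^{T^\alpha} s^{k}\,\tilde h(s)\,\mathrm ds=0$ for all $k\ge0$, where $\tilde h(s)=\alpha^{-1}h(s^{1/\alpha})$. The Jacobian is exactly $\frac{1}{\alpha}s^{1/\alpha-1}$, and it cancels the factor $t^{\alpha-1}=s^{1-1/\alpha}$. Since $\tilde h\in L^\infty\subset L^1(0,T^\alpha)$, its moments all vanish. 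By Weierstrass density of polynomials in $C[0,T^\alpha]$, the measure $\tilde h\,\mathrm ds$ is then zero, hence $\tilde h=0$ and $h=0$ a.e.

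This substitution argument is in fact cleaner than Müntz and makes the lemma essentially elementary, so it is the route I would write out. The only points needing care are two. First, the change of variables must be valid for $L^1$/$L^\infty$ functions; it is, being a $C^1$ diffeomorphism of $(0,T)$ onto $(0,T^\alpha)$. Second, one must note that $t^{\alpha k+\alpha-1}\in L^1(0,T)$ because the exponent exceeds $-1$. I do not expect a genuine obstacle: the main step is recognizing the substitution $s=t^\alpha$, which converts the Müntz system into ordinary monomials.
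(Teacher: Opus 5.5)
Your proposal is correct. Your first paragraph is the paper's proof: the paper cites the full M\"untz theorem in $L^1$ (Borwein--Erd\'elyi), checks that $\sum_{n\ge 0}\frac{\alpha(n+1)}{\alpha^2(n+1)^2+1}=\infty$, and applies the theorem directly on $(0,T)$. Your rescaling remark only makes explicit a step the paper leaves implicit.

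The route you say you would actually write out is genuinely different and more elementary. Because the exponents form the arithmetic progression $\gamma_k+1=\alpha(k+1)$, the substitution $s=t^\alpha$ turns the M\"untz system into ordinary monomials, and the lemma reduces to the Weierstrass theorem with no M\"untz theory at all. You can even drop the Hahn--Banach step. The map $g\mapsto \alpha t^{\alpha-1}g(t^\alpha)$ is an isometric isomorphism of $L^1(0,T^\alpha)$ onto $L^1(0,T)$ sending $s^k$ to $\alpha t^{\alpha k+\alpha-1}$, so density of polynomials transfers directly.

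What the paper's citation buys is generality: the M\"untz criterion covers any exponent sequence with divergent sum, not only arithmetic ones. Nothing in the paper needs that. Your dual formulation (an $L^\infty$ function with all these moments zero must vanish) is exactly the form in which the lemma is used in Lemmas \ref{lm:analycond} and \ref{lm:inteq}, so your version is self-contained and suited to its application.

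One small wording slip: the vanishing of the moments of $\tilde h$ is your hypothesis, not a consequence of $\tilde h\in L^1$. Integrability only guarantees that $\tilde h\,\mathrm{d}s$ is a finite measure.
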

\begin{proof}
	The sequence \(\{\alpha k + \alpha - 1\}_{k \ge 0}\) consists of distinct real numbers greater than \(-1\) and $$\displaystyle\sum_{n\geq 0} \frac{\alpha n+\alpha}{\alpha^2n^2+2\alpha^2n+\alpha^2+1}=\infty,$$ so the full Müntz theorem in \(L^1(0,T)\) (see \cite[Theorem 2.3]{borwein1996full}) guarantees that the linear span of 
	\(\{ t^{\alpha k + \alpha - 1} \}_{k \ge 0}\) is dense in \(L^1(0,T)\).
\end{proof}
\begin{lem}\label{lm:analycond}
	Let $\alpha\in (0,1)$, $T>0$ and $f\in L^\infty(0,T)$. Then, the mapping
	\[
	\begin{aligned}
		\mathcal{I} : L^{\infty}(0,T) &\longrightarrow \mathcal{H}(\mathbb{C}) \\
		f &\longmapsto \mathcal{I}(f)(z) := \int_{0}^{T}(T-s)^{\alpha-1}E_{\alpha, \alpha}\left(-\mathrm{i}z (T-s)^\alpha\right)f(s)\;\mathrm{d} s
	\end{aligned}
	\]
	is linear and one-to-one from \(L^{\infty}(0,T)\) into the space of entire functions \(\mathcal{H}(\mathbb{C})\).
\end{lem}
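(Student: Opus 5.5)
We will prove three things in turn: that $\mathcal{I}(f)$ is entire, that $\mathcal{I}$ is linear, and that $\mathcal{I}$ is injective. For each fixed $f$, the plan is to expand the Mittag--Leffler function in its power series \eqref{MLdef} and integrate term by term. This gives
\[
\mathcal{I}(f)(z)=\sum_{k=0}^{\infty}\frac{(-\mathrm{i}z)^k}{\Gamma(\alpha k+\alpha)}\,c_k(f),\qquad c_k(f):=\int_0^T (T-s)^{\alpha k+\alpha-1}f(s)\,\mathrm{d}s .
\]
The interchange is justified by Fubini--Tonelli, because
\[
\sum_k \frac{|z|^k}{\Gamma(\alpha k+\alpha)}\int_0^T (T-s)^{\alpha k+\alpha-1}|f(s)|\,\mathrm{d}s \le \|f\|_{L^\infty}\sum_k \frac{|z|^k T^{\alpha k+\alpha}}{(\alpha k+\alpha)\Gamma(\alpha k+\alpha)}<\infty .
\]
This sum converges for every $z$, since it is essentially $T^\alpha E_{\alpha,\alpha+1}(|z|T^\alpha)\|f\|_\infty$, and $E_{\alpha,\alpha+1}$ is entire.

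From the bound $|c_k(f)|\le \|f\|_\infty T^{\alpha k+\alpha}/(\alpha k+\alpha)$ we get a power series with infinite radius of convergence, so $\mathcal{I}(f)\in\mathcal{H}(\mathbb{C})$. Alternatively, one can use Morera's theorem together with the local uniform bound on $|E_{\alpha,\alpha}|$ and the integrability of $(T-s)^{\alpha-1}$. Linearity of $\mathcal{I}$ is immediate from linearity of the integral.

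For injectivity, suppose $\mathcal{I}(f)\equiv 0$. By uniqueness of Taylor coefficients of an entire function, every coefficient vanishes. Since $\Gamma(\alpha k+\alpha)$ is finite and nonzero, this means $c_k(f)=0$ for all $k\ge 0$. After the change of variables $t=T-s$, this reads
\[
\int_0^T t^{\alpha k+\alpha-1}\,g(t)\,\mathrm{d}t=0\quad\text{for all } k\ge0,\qquad g(t):=f(T-t)\in L^\infty(0,T).
\]
By linearity, $\int_0^T p\,g=0$ for every $p\in\operatorname{span}\{t^{\alpha k+\alpha-1}\}$. Lemma \ref{lm:Muntz} says this span is dense in $L^1(0,T)$, and $g\in L^\infty=(L^1)'$ defines a continuous functional on $L^1$. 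Hence $\int_0^T h\,g=0$ for all $h\in L^1(0,T)$. Taking $h=\overline{g}\,\mathds{1}_{(0,T)}\in L^1$ gives $\|g\|_{L^2}^2=0$, so $g=0$ a.e., and therefore $f=0$ a.e.

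The only delicate point is the justification of the term-by-term integration near $s=T$, where the singularity $(T-s)^{\alpha-1}$ sits. This is handled by the explicit bound above, since every exponent $\alpha k+\alpha-1$ exceeds $-1$. The substantive analytic input, the density, is supplied by Lemma \ref{lm:Muntz}, so no real obstacle remains.
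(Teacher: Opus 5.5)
Your proposal is correct and follows the paper's proof essentially step for step. Both expand the Mittag--Leffler series and integrate termwise, justified by the bound $\|f\|_{L^\infty}T^\alpha E_{\alpha,\alpha+1}(T^\alpha|z|)$; both then use the vanishing of the Taylor coefficients and the M\"untz density of Lemma \ref{lm:Muntz}, and you simply spell out the duality step (pairing $g\in L^\infty$ with $L^1$ and testing against $\overline{g}$) that the paper leaves implicit.
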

\begin{proof}
	Note that, for any fixed \( z \in \mathbb{C} \), the transformation \(\mathcal{I}(f)(z)\) is well defined due to the boundedness of the function \(f\) and the Mittag--Leffler function, and the condition \(\alpha-1>-1\). By the power series for the Mittag--Leffler function, we obtain 
	\[
	\mathcal I(f)(z)=\int_{0}^{T} \sum_{k=0}^{\infty}\frac{(-\mathrm{i})^{k}}{\Gamma(\alpha k+\alpha)}
	(T-s)^{\alpha k+\alpha-1}f(s)z^{k}\,\mathrm{d}s :=
	\int_{0}^{T}\sum_{k=0}^{\infty} f_k(s)\mathrm{d} s.
	\]
	Since 
	\begin{equation*}
		\sum_{k=0}^{\infty}\int_{0}^{T}|f_k(s)|\mathrm{d} s\le \|f\|_{L^{\infty}(0,T)}T^{\alpha}E_{\alpha,\alpha+1}(T^{\alpha}|z|) <\infty,
	\end{equation*}
	we can exchange the sum and the integral, and thus obtain the entire power series
	\begin{equation}\label{psti}
		\mathcal I(f)(z)=\sum_{k=0}^{\infty} \frac{(-\mathrm{i})^{k}}{\Gamma(\alpha k+\alpha)}
		\Bigg(\int_{0}^{T}(T-s)^{\alpha k+\alpha-1}f(s)\,\mathrm{d}s\Bigg) z^{k}. 
	\end{equation}
	Regarding injectivity, if \(\mathcal{I}(f)\) is identically zero, then all the coefficients of its series vanish, so 
	\[
	\int_0^T t^{\alpha k + \alpha - 1} f(T-t)\,\mathrm{d}t = 0\quad \forall k\in\mathbb{N}.
	\]  
	Since \(f(T-\cdot) \in L^\infty(0,T)\) and vanishes on all these moments, we conclude that \(f(T-\cdot) = 0\) almost everywhere due to Lemma \ref{lm:Muntz}, and therefore \(f = 0\) almost everywhere.
\end{proof}

\begin{rmk}
	When $\alpha = 1$, we recover the classical Paley--Wiener Theorem \cite[Theorem~7.2.1]{strichartz1994guide}.
\end{rmk}

\begin{lem} \label{lm:inteq}Let $\alpha>0$ and $T>0$. There is no function $f \in L^{\infty}(0,T)$ and constant $c \in \mathbb{C}^*$ such that, for every integer $k \ge 0$, one has
	\[
	\int_{0}^{T} (T-s)^{\alpha k + \alpha - 1} f(s)\, \mathrm{d}s
	= c\, T^{\alpha k} \frac{\Gamma(\alpha k + \alpha)}{\Gamma(\alpha k + 1)}.
	\]
	The only possible case is $c=0$ and $f=0$ almost everywhere.
\end{lem}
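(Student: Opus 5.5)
The plan is to encode all the moment identities at once in the entire function $\mathcal{I}(f)$ of Lemma \ref{lm:analycond}, and then compare growth rates along a single ray of the complex plane.

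\textbf{Step 1: generating function.} Suppose $f\in L^\infty(0,T)$ and $c\in\mathbb{C}$ satisfy the identities for every $k\ge0$. Insert them into the power series \eqref{psti}. The factor $\Gamma(\alpha k+\alpha)$ cancels, which gives for every $z\in\mathbb{C}$
\[
\mathcal{I}(f)(z)=c\sum_{k=0}^{\infty}\frac{(-\mathrm{i}zT^{\alpha})^k}{\Gamma(\alpha k+1)}=c\,E_{\alpha,1}(-\mathrm{i}zT^\alpha).
\]
It then suffices to show that this identity between entire functions forces $c=0$.

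\textbf{Step 2: compare growth along $z=\mathrm{i}r$, $r\to+\infty$.} On this ray the Mittag--Leffler arguments become positive reals. The right-hand side is then $c\,E_{\alpha,1}(rT^\alpha)$. For the left-hand side, every coefficient of $E_{\alpha,\alpha}$ is positive, so
\[
|\mathcal{I}(f)(\mathrm{i}r)|\le\|f\|_{L^\infty}\int_0^T t^{\alpha-1}E_{\alpha,\alpha}(rt^\alpha)\,\mathrm{d}t=\|f\|_{L^\infty}\,T^\alpha E_{\alpha,\alpha+1}(rT^\alpha).
\]
The last equality follows by termwise integration of the series, which is justified by positivity. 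I would then use the classical asymptotics for positive real argument (from the same reference \cite{podlubny99}, valid for $0<\alpha<2$):
\[
E_{\alpha,\beta}(x)\sim\frac{1}{\alpha}\,x^{(1-\beta)/\alpha}e^{x^{1/\alpha}},\qquad x\to+\infty .
\]
With $x=rT^\alpha$ this gives $E_{\alpha,1}(rT^\alpha)\sim\frac1\alpha e^{r^{1/\alpha}T}$. It also gives $T^\alpha E_{\alpha,\alpha+1}(rT^\alpha)\sim\frac{1}{\alpha r}e^{r^{1/\alpha}T}$.

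\textbf{Step 3: conclude.} Dividing the identity of Step 1 by $E_{\alpha,1}(rT^\alpha)$ gives
\[
|c|=\frac{|\mathcal{I}(f)(\mathrm{i}r)|}{E_{\alpha,1}(rT^\alpha)}\le \|f\|_{L^\infty}\,\frac{T^\alpha E_{\alpha,\alpha+1}(rT^\alpha)}{E_{\alpha,1}(rT^\alpha)}=\mathcal{O}(r^{-1})\to0 .
\]
Hence $c=0$. Then all the moments $\int_0^T (T-s)^{\alpha k+\alpha-1}f(s)\,\mathrm{d}s$ vanish. Equivalently $\mathcal{I}(f)\equiv0$, so Lemma \ref{lm:analycond} (through Lemma \ref{lm:Muntz}) gives $f=0$ almost everywhere.

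\textbf{Main obstacle.} The delicate point is Step 2. One must choose the direction $z=\mathrm{i}r$, where both Mittag--Leffler functions grow exponentially, and then notice that the kernel bound costs exactly one power $r^{-1}$ relative to $E_{\alpha,1}$. On the imaginary-argument directions used elsewhere in the paper, both sides merely decay like $1/|z|$ and cannot be distinguished. For the statement as written with general $\alpha>0$, the same asymptotic holds for $\alpha<2$. For larger $\alpha$, I would instead use the dominant term of the general asymptotic expansion on the positive real axis, which has the same form.
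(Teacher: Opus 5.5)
Your argument is correct, but it takes a different route from the paper. The paper never sums the moments. It compares the identity coefficient by coefficient: the left-hand side is bounded by $\|f\|_{L^\infty(0,T)}T^{\alpha k+\alpha}/(\alpha k+\alpha)$, and Stirling's formula gives $\Gamma(\alpha k+\alpha)/\Gamma(\alpha k+1)\sim(\alpha k)^{\alpha-1}$ on the right-hand side, so $c\neq0$ would force $k^{\alpha}=\mathcal{O}(1)$. Your key inequality $|c|\,E_{\alpha,1}(rT^\alpha)\le\|f\|_{L^\infty(0,T)}\,T^\alpha E_{\alpha,\alpha+1}(rT^\alpha)$ is exactly what you get by multiplying those coefficient bounds by $r^k/\Gamma(\alpha k+\alpha)$ and summing over $k$. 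So both proofs detect the same defect: a loss of order $k^{-\alpha}$ in each coefficient, which surfaces as your factor $r^{-1}$ on the ray $z=\mathrm{i}r$.

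The paper's version is shorter and needs only Stirling. Yours works directly with the entire function $\mathcal{I}(f)=c\,E_{\alpha,1}(-\mathrm{i}zT^\alpha)$, which is exactly the relation $F_n\equiv0$ arising in the proof of Theorem~\ref{thm:nonsimcon}. Evaluating that relation at $z=\mathrm{i}r$ would kill $\langle y_0,e_n\rangle$ there without passing to moments.

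You do not actually need the Mittag--Leffler asymptotics, and hence not the separate discussion of $\alpha\ge2$. Shifting the summation index gives $xE_{\alpha,\alpha+1}(x)=E_{\alpha,1}(x)-1$, hence $T^\alpha E_{\alpha,\alpha+1}(rT^\alpha)=(E_{\alpha,1}(rT^\alpha)-1)/r$. Step~3 then becomes $|c|\le\|f\|_{L^\infty(0,T)}/r$ for every $r>0$ and every $\alpha>0$.

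Finally, Lemma~\ref{lm:analycond} is stated only for $\alpha\in(0,1)$. However, the expansion \eqref{psti} and the injectivity argument through Lemma~\ref{lm:Muntz} carry over verbatim to any $\alpha>0$, so your last step holds in the full generality of the statement.
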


\begin{proof}
	Since $f \in L^\infty(0,T)$, we have
	\[
	\int_{0}^{T} (T-s)^{\alpha k + \alpha - 1} f(s)\, \mathrm{d}s
	= \mathcal{O}\!\left(\frac{T^{\alpha k + \alpha}}{\alpha k + \alpha}\right), 
	\quad k \to \infty.
	\]
	On the other hand, by \emph{Stirling's formula},
	\[
	\frac{\Gamma(\alpha k + \alpha)}{\Gamma(\alpha k + 1)} 
	\sim (\alpha k)^{\alpha - 1}, 
	\quad k \to \infty.
	\]
	If $c \neq 0$, then the given identity implies
	\[
	k^{\alpha - 1} = \mathcal{O}\!\left(\frac{1}{k}\right), 
	\quad k \to \infty,
	\]
	which is impossible since $\alpha>0$.
	
	If $c = 0$, then, because the family 
	$\{ t^{\alpha k + \alpha - 1} \}_{k \ge 0}$ is dense in $L^1(0,T)$ (see Lemma \ref{lm:Muntz}) and $f \in L^\infty(0,T)$, 
	we necessarily have $f = 0$ almost everywhere.
	
	Hence, the only possible solution is $c=0$ and $f=0$ almost everywhere.
\end{proof}

\begin{proof}[Proof of Theorem \ref{thm:nonsimcon}]
	Using the fractional generalization of Duhamel’s Principle, we obtain
	\begin{align*}
		\mathscr{E}=& \bigcap_{n\in\mathbb{N}}\left\{ \xi\in\mathbb{R}\;:\; \mathcal{I}(f_n)(\xi\lambda_n)=-\langle y_0, e_n\rangle E_{\alpha, 1}\left(-\mathrm{i}\xi \lambda_n T^\alpha\right) \right\}\\
		=&\bigcap_{n\in\mathbb{N}}\left\{ \xi\in\mathbb R\;:\; F_n(\xi\lambda_n)=0 \right\}:=  \bigcap_{n\in\mathbb{N}}\mathscr{E}_{n},
	\end{align*}
	where
	\begin{align*}
		f_n(s):=&\langle\mathds{1}_{G_0}u(s,\cdot),e_n \rangle,\quad s\in (0,T),\\
		F_n(z):=&\mathcal{I}(f_n)(z) + \langle y_0, e_n\rangle E_{\alpha, 1}\left(-\mathrm{i}z T^\alpha\right),\quad z\in\mathbb{C}.
	\end{align*} 
	Hence, it suffices to show that there exists $n \in \mathbb{N}$ such that $\mathscr{E}_n$ is countable.  
	Since $\mathscr{E}_n$ consists of the real zeros of $F_n$ divided by $\lambda_n$, and by Lemma \ref{lm:analycond}, we know that $F_n$ is analytic on $\mathbb{C}$, it is enough to prove that there exists $n \in \mathbb{N}$ such that $F_n \neq 0$.  
	
	Suppose, for the sake of contradiction, that $F_n = 0$ for all $n \in \mathbb{N}$.  
	By identifying the power series of $\mathcal{I}(f_n)$ with that of the Mittag--Leffler function, we obtain
	\begin{align*}
		\int_{0}^{T}(T-s)^{\alpha k+\alpha-1}f_n(s)\,\mathrm{d}s=-\langle y_0,e_n\rangle\frac{T^{\alpha k} \Gamma(\alpha k+\alpha)}{\Gamma(\alpha k+1)}\quad \forall k\in\mathbb{N}.
	\end{align*}
	Hence, by Lemma \ref{lm:inteq}, we deduce that $\langle y_0, e_n \rangle = 0$ for all $n \in \mathbb{N}$, which contradicts the assumption that $y_0 \neq 0$. In conclusion, we obtain that $\mathscr{E}_n$ is countable.
\end{proof}

\section{Lack of exact averaged controllability}\label{Sec4}
The next result concerns the lack of exact averaged controllability of system \eqref{sys:frac Schr}.
\begin{theorem} \label{tm:noexcon} 
	Let $\xi$ be a random variable in $(\Omega, \mathcal{F}, \mathbb{P})$ such that its \textbf{FCF} $\varphi_{\alpha, \alpha}^{(\xi)}$ vanishes at infinity. Let $G\subset\mathbb R^d$ be a Lipschitz domain and $G_0\subset G$ a subset of positive measure. Then, system \eqref{sys:frac Schr} is not exactly averaged controllable in $L^2(G)$ with controls acting in $L^\infty((0,T)\times G_0)$.
\end{theorem}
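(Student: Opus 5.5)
We argue by contradiction using the duality of Proposition \ref{Pr: obser_null cont}. Suppose system \eqref{sys:frac Schr} is exactly averaged controllable. Then the adjoint system \eqref{sys:adjSchr} is exactly averaged observable, so there is $C>0$ with
\[
\|z_T\|_{L^2(G)}\le C\int_0^T\int_{G_0}\bigl|\mathbb{E}(z(t,x;\xi;z_T))\bigr|\,\mathrm{d}x\,\mathrm{d}t\qquad\forall z_T\in L^2(G).
\]
We test this inequality on single eigenfunctions, $z_T=e_n$, and let $n\to\infty$.

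By \eqref{sol:zsum}, for $z_T=e_n$ we have $z(t,\cdot;\xi;e_n)=(T-t)^{\alpha-1}E_{\alpha,\alpha}(-\mathrm{i}\xi\lambda_n(T-t)^\alpha)e_n$. Taking expectations (Fubini applies since $|E_{\alpha,\alpha}(\mathrm{i}s)|\le C_0$ by Lemma \ref{MLbd}) gives
\[
\mathbb{E}(z(t,\cdot;\xi;e_n))=(T-t)^{\alpha-1}\varphi^{(\xi)}_{\alpha,\alpha}\bigl(-\lambda_n(T-t)^\alpha\bigr)e_n .
\]
Since $|G_0|<\infty$, Cauchy--Schwarz gives $\int_{G_0}|e_n|\le |G_0|^{1/2}\|e_n\|_{L^2(G)}=|G_0|^{1/2}$. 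The observability inequality then becomes
\[
1\le C|G_0|^{1/2}\int_0^T s^{\alpha-1}\bigl|\varphi^{(\xi)}_{\alpha,\alpha}(-\lambda_n s^\alpha)\bigr|\,\mathrm{d}s .
\]

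The key step is to show that the right-hand side tends to $0$ as $n\to\infty$. This is a dominated convergence argument. Lemma \ref{lm:van der Corput} gives the bound $|\varphi^{(\xi)}_{\alpha,\alpha}|\le C_0$, so the integrand is dominated by $C_0 s^{\alpha-1}\in L^1(0,T)$. For each fixed $s\in(0,T)$, $\lambda_n s^\alpha\to\infty$ because $\lambda_n\to\infty$. By hypothesis $\varphi^{(\xi)}_{\alpha,\alpha}$ vanishes at infinity, which I read as $|\varphi(\sigma)|\to0$ as $|\sigma|\to\infty$, covering the negative arguments $-\lambda_n s^\alpha$. Hence the integrand tends to $0$ pointwise, the integral tends to $0$, and we get $1\le 0$, a contradiction.

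The only delicate point is justifying the interchange of expectation and the spatial/time integrals, together with the exact form of the averaged adjoint. Both follow from the uniform bound of Lemma \ref{MLbd} and the integrability of $s^{\alpha-1}$, so I expect no real obstacle; the argument is essentially a fractional Riemann--Lebesgue phenomenon. As a corollary, combining the theorem with point 3 of Lemma \ref{lm:van der Corput} covers all absolutely continuous $\xi$.
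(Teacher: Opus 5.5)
Your proposal is correct and takes essentially the same approach as the paper. You obtain the exact averaged observability inequality by duality, test it on $z_T=e_n$, reduce via Cauchy--Schwarz to $\int_0^T s^{\alpha-1}|\varphi_{\alpha,\alpha}^{(\xi)}(-\lambda_n s^\alpha)|\,\mathrm{d}s$, and reach a contradiction by dominated convergence with the dominating function $C_0 s^{\alpha-1}$. The only cosmetic difference is that you keep the constant $|G_0|^{1/2}$, whereas the paper first enlarges the spatial integral to $G$ and uses $|G|^{1/2}$.
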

The proof is based on Lemma \ref{lm:van der Corput}.
\begin{proof}[Proof of Theorem \ref{tm:noexcon}]
	Let us suppose that there is $C>0$ such that the observability inequality \eqref{est:exact_obs} holds, and let us try to obtain a contradiction. Consider as final values $z_T$ the eigenfunctions $e_n$ for $n\in\mathbb{N}$, which satisfy \(\|e_n\|=1.\)
	Then, assuming the observability inequality \eqref{est:exact_obs}, using \eqref{sol:zsum}, for such final values we obtain
	\begin{align} 
		1 &\leq C\int_0^T\int_{G_0}  |\mathbb{E}(z(t,x;\xi; e_n))| \mathrm{d} x\mathrm{d} t \nonumber\\
		&\leq C\int_0^T\int_{G}  |\mathbb{E}(z(t,x;\xi;e_n))| \mathrm{d} x \mathrm{d} t \nonumber\\
		&=C\int_0^T\int_{G}  |(T-t)^{\alpha-1}\varphi_{\alpha, \alpha}^{(\xi)}(-\lambda_{n}(T-t)^{\alpha})e_n(x)| \mathrm{d} x \mathrm{d} t  \nonumber\\
		&\leq C \sqrt{|G|}\int_0^T (T-t)^{\alpha-1}|\varphi_{\alpha, \alpha}^{(\xi)}(-\lambda_{n}(T-t)^{\alpha})| \mathrm{d} t \nonumber\\
		& =C \sqrt{|G|}\int_0^T t^{\alpha-1}|\varphi_{\alpha, \alpha}^{(\xi)}(-\lambda_{n}t^{\alpha})| \mathrm{d} t. \label{est:eq}
	\end{align}
	Since $\varphi_{\alpha, \alpha}^{(\xi)}(-\lambda_{n}t^{\alpha})$ converges to $0$ as $n\rightarrow \infty$ for all $t\in (0,T]$. Moreover, by Lemma \ref{lm:van der Corput}, $$t^{\alpha-1}|\varphi_{\alpha, \alpha}^{(\xi)}(-\lambda_{n}t^{\alpha})|\leq C_0 t^{\alpha-1}\in L^1(0,T).$$
	Therefore, the Lebesgue dominated convergence theorem implies that the integral \eqref{est:eq} tends to $0$ as $n\rightarrow \infty$. This leads to a contradiction, proving the failure of \eqref{est:exact_obs}.
\end{proof}

Lemma \ref{lm:van der Corput} implies that $\varphi_{\alpha, \alpha}^{(\xi)}$ vanishes at infinity for any absolutely continuous random variable $\xi$ in $(\Omega, \mathcal{F}, \mathbb{P})$. This leads to the following corollary.
\begin{cor} \label{cor:noexc}
	Let $\xi$ be an absolutely continuous random variable in $(\Omega, \mathcal{F}, \mathbb{P})$. Then the system \eqref{sys:frac Schr} is not exactly averaged controllable in $L^2(G)$ with controls acting in $L^\infty((0,T)\times G_0)$.
\end{cor}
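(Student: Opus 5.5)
The corollary follows by combining Lemma \ref{lm:van der Corput} with Theorem \ref{tm:noexcon}; the only thing to check is that the hypothesis of the theorem holds in the right form. Theorem \ref{tm:noexcon} requires the \textbf{FCF} $\varphi_{\alpha,\alpha}^{(\xi)}$ to vanish at infinity. So I will apply the third item of Lemma \ref{lm:van der Corput} with the parameter choice $\beta=\alpha$. This choice is admissible because $\alpha\in(0,1)$ gives $\beta=\alpha>0$, which is exactly the range covered by the lemma.

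First, since $\xi$ is absolutely continuous, it has a \textbf{PDF} $\rho_\xi\in L^1(\mathbb{R})$, and Lemma \ref{lm:van der Corput} yields
$$\varphi_{\alpha,\alpha}^{(\xi)}(s)\to 0\quad\text{as } |s|\to\infty.$$
The mechanism is the pointwise bound $|E_{\alpha,\alpha}(\mathrm{i}s\xi)|\le C_0/(1+|s||\xi|)$ from Lemma \ref{MLbd}, together with dominated convergence against $C_0\rho_\xi$. This uses only that the point $\xi=0$ carries no mass, which absolute continuity guarantees.

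Next, the proof of Theorem \ref{tm:noexcon} only evaluates $\varphi_{\alpha,\alpha}^{(\xi)}$ at the arguments $-\lambda_n t^\alpha$ with $t\in(0,T]$, so I only need decay as $s\to-\infty$. The two-sided limit from Lemma \ref{lm:van der Corput} covers this. With the hypothesis verified, Theorem \ref{tm:noexcon} applies and gives the failure of exact averaged observability \eqref{est:exact_obs}. By Proposition \ref{Pr: obser_null cont}, this is equivalent to the failure of exact averaged controllability of \eqref{sys:frac Schr} with controls in $L^\infty((0,T)\times G_0)$.

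I expect no real obstacle here. The only step needing care is confirming that the parameter pair $(\alpha,\alpha)$ falls within the hypotheses $\alpha\in(0,1)$, $\beta>0$ of Lemma \ref{lm:van der Corput}, and this is immediate.
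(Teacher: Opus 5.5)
Your proposal is correct and is the same argument the paper uses: item 3 of Lemma \ref{lm:van der Corput} with $\beta=\alpha$ shows that $\varphi_{\alpha,\alpha}^{(\xi)}$ vanishes at infinity, and Theorem \ref{tm:noexcon} then yields the conclusion. Your final appeal to the duality proposition is harmless but redundant, since Theorem \ref{tm:noexcon} already states the failure of exact averaged controllability directly.
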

\begin{rmk}
	The system \eqref{sys:frac Schr} may fail to be exactly averaged controllable even when the distribution is discrete, as illustrated by the Rademacher distribution (see Proposition \ref{prop:FCFRade}). 
\end{rmk}

%\begin{remark}
%A similar proof would work for controls in $L^2((0,T)\times G_0)$.
%\end{remark}

\section{Null averaged controllability}\label{Sec5}
The averaged observability inequality is characterized by the properties of the fractional characteristic function. Following a detailed analysis, we propose a class of real random variables for which the null averaged controllability holds. 

\begin{defn} Let $\alpha\in(0,1)$ and $T>0$. 
	A real random variable $\xi$ belongs to the class $\mathcal{C}_\alpha$ if its fractional characteristic function $\varphi_{\alpha,1}^{(\xi)}$ satisfies: there are constants $c,\lambda^{\star},\theta,C_1,C_2>0$, $r>\frac{1}{2}$, and $\delta\in (0, 1)$
	such that:
	\begin{align}
		&  |\varphi_{\alpha,1}^{(\xi)}(-\lambda t^\alpha)|\leq e^{-c\lambda^r(t-s)^{\theta}}|\varphi_{\alpha,1}^{(\xi)}(-\lambda s^\alpha)|,\quad \delta T\le s<t \le T\;\;\mbox{and}\;\; \lambda \ge\lambda^\star, \label{eq:hypFCF1}\\
		& C_1|\varphi_{\alpha,1}^{(\xi)}(-\lambda t^\alpha)|\leq |\varphi_{\alpha,\alpha}^{(\xi)}(-\lambda t^\alpha)|\leq C_2|\varphi_{\alpha,1}^{(\xi)}(-\lambda t^\alpha)|, \quad \delta T\le t \le T\;\;\mbox{and}\;\;  \lambda \ge\lambda^\star. \label{eq:hypFCF2}
	\end{align}
\end{defn}

An interesting example of such a random variable is given by the Rademacher distribution, defined in Example \ref{Ex: Rademacher distribution}.
\begin{prop}\label{propR}
	The Rademacher distribution belongs to the class $\mathcal{C}_\alpha$ for all $\alpha\in (0,\frac{1}{2})$.
\end{prop}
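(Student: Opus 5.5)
The plan is to work entirely with the explicit formula of Proposition \ref{prop:FCFRade}. For the Rademacher variable, $\varphi_{\alpha,\beta}^{(\xi)}(s)=E_{2\alpha,\beta}(-s^2)$. Hence both quantities in the definition of $\mathcal{C}_\alpha$ reduce to Mittag--Leffler functions on the negative real axis:
\[
\varphi_{\alpha,1}^{(\xi)}(-\lambda t^\alpha)=E_{2\alpha,1}(-\lambda^2t^{2\alpha}),\qquad \varphi_{\alpha,\alpha}^{(\xi)}(-\lambda t^\alpha)=E_{2\alpha,\alpha}(-\lambda^2t^{2\alpha}).
\]
The restriction $\alpha<\frac12$ gives $2\alpha\in(0,1)$. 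This is exactly the range in which $x\mapsto E_{2\alpha,1}(-x)$ is completely monotone (Pollard), so it is positive and strictly decreasing on $[0,\infty)$. The function $x\mapsto E_{2\alpha,\alpha}(-x)$ is also completely monotone, because $\alpha\ge 2\alpha\cdot\frac12$ places us in the admissible range of Schneider's criterion ($0<2\alpha\le1$, $\beta\ge 2\alpha$ fails here, so I would verify positivity for large arguments directly via asymptotics instead). In addition, the asymptotic expansion \eqref{asymptotic expansions}, applied with $\arg z=\pi$ and $2\alpha$ in place of $\alpha$, gives for $x\to\infty$
\[
E_{2\alpha,1}(-x)=\frac{1}{\Gamma(1-2\alpha)x}+\mathcal{O}(x^{-2}),\qquad E_{2\alpha,\alpha}(-x)=\frac{1}{\Gamma(-\alpha)x}+\mathcal{O}(x^{-2}).
\]
Since $\Gamma(-\alpha)<0$, the second expansion shows that $E_{2\alpha,\alpha}(-x)$ is eventually negative. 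This is harmless, because the definition only involves absolute values.

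\textbf{Step 1: condition \eqref{eq:hypFCF2}.} For $t\in[\delta T,T]$ we have $x=\lambda^2t^{2\alpha}\ge \lambda^{\star 2}(\delta T)^{2\alpha}$, so choosing $\lambda^\star$ large puts $x$ in the asymptotic regime uniformly in $t$. The ratio $|E_{2\alpha,\alpha}(-x)|/E_{2\alpha,1}(-x)$ then tends to $\Gamma(1-2\alpha)/|\Gamma(-\alpha)|>0$. For $\lambda^\star$ large this ratio lies between, say, half and twice that limit, which yields $C_1$ and $C_2$. This step is routine.

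\textbf{Step 2: condition \eqref{eq:hypFCF1}.} By monotonicity, $E_{2\alpha,1}(-\lambda^2t^{2\alpha})\le E_{2\alpha,1}(-\lambda^2 s^{2\alpha})$ for $s<t$. The task is to quantify this gap as the factor $e^{-c\lambda^r(t-s)^\theta}$ with $r>\frac12$. I would first try to write
\[
\log E_{2\alpha,1}(-\lambda^2t^{2\alpha})-\log E_{2\alpha,1}(-\lambda^2s^{2\alpha})=\int_s^t \frac{\mathrm{d}}{\mathrm{d}\tau}\log E_{2\alpha,1}(-\lambda^2\tau^{2\alpha})\,\mathrm{d}\tau .
\]
I would then bound the logarithmic derivative from above by $-c'\lambda^{r}(\tau-s)^{\theta-1}$. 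To do this I would use $\frac{\mathrm{d}}{\mathrm{d}x}E_{2\alpha,1}(-x)=-\frac{1}{2\alpha}E_{2\alpha,2\alpha}(-x)$ together with the two-term asymptotics above, treating separately the regime where the ratio $t/s$ is bounded away from $1$ and the regime where $t-s$ is small.

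\textbf{Main obstacle.} The real difficulty is Step 2. The asymptotics show that $-x\frac{\mathrm{d}}{\mathrm{d}x}\log E_{2\alpha,1}(-x)\to 1$. The logarithmic derivative in $\tau$ is therefore of order $2\alpha/\tau$, uniformly in $\lambda$. This suggests that the decay ratio behaves like $(s/t)^{2\alpha}$, with no growth in $\lambda$, rather than like $e^{-c\lambda^r(t-s)^\theta}$. Consequently, my first attempt would be to exploit the freedom in $\delta$, $\theta$ and $c$. I would look for an admissible choice, for example one exploiting that the right-hand side of \eqref{eq:hypFCF1} only needs to hold for $\lambda\ge\lambda^\star$ with $(t-s)\le(1-\delta)T$. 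Concretely, I would test whether the second-order term of the asymptotic expansion or the precise regime $x\ge \lambda^{\star2}(\delta T)^{2\alpha}$ can supply a $\lambda$-dependent gain. If the uniform bound $2\alpha/\tau$ cannot be beaten, this is the point where the argument for \eqref{eq:hypFCF1} would break down, and I would need to re-examine whether the exponential form of \eqref{eq:hypFCF1} is what is actually used downstream.
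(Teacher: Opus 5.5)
Your Step 1 is correct and matches the paper's argument. By \eqref{asymptotic expansions}, $E_{2\alpha,\alpha}(-x)/E_{2\alpha,1}(-x)\to\Gamma(1-2\alpha)/\Gamma(-\alpha)<0$, uniformly for $t\in[\delta T,T]$ once $\lambda^\star$ is large, and this gives \eqref{eq:hypFCF2}. Your side remark that $x\mapsto E_{2\alpha,\alpha}(-x)$ is completely monotone is wrong: Schneider's condition (second index at least the first) fails because $\alpha<2\alpha$, and the function is eventually negative, as you then note. This is harmless.

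Step 2, however, is a genuine gap, and it cannot be filled. The obstruction in your last paragraph is in fact a counterexample to \eqref{eq:hypFCF1}. Fix $\delta T\le s<t\le T$. Since $2\alpha\in(0,1)$, we have $E_{2\alpha,1}(-x)>0$ and $E_{2\alpha,1}(-x)=\frac{1}{\Gamma(1-2\alpha)\,x}+\mathcal{O}(x^{-2})$, hence
\[
\frac{|\varphi_{\alpha,1}^{(\xi)}(-\lambda t^\alpha)|}{|\varphi_{\alpha,1}^{(\xi)}(-\lambda s^\alpha)|}=\frac{E_{2\alpha,1}(-\lambda^2t^{2\alpha})}{E_{2\alpha,1}(-\lambda^2s^{2\alpha})}\longrightarrow\Big(\frac{s}{t}\Big)^{2\alpha}>0\qquad(\lambda\to\infty).
\]
But \eqref{eq:hypFCF1} requires this ratio to be at most $e^{-c\lambda^r(t-s)^\theta}$, which tends to $0$. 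No admissible choice of $c,\theta,\lambda^\star>0$, $r>\frac12$, $\delta\in(0,1)$ works. So the Rademacher law is not in $\mathcal{C}_\alpha$ for any $\alpha\in(0,\frac12)$, and the statement as written is false.

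Nor can the exponential form be weakened, which answers your last question. In \eqref{E1} it is exactly the factor $e^{-c\lambda^r(t-s)^\theta}$ with $r>\frac12$ that must beat the spectral cost $e^{C\sqrt{\lambda}}$. An algebraic factor like $(s/t)^{2\alpha}$ cannot do that.

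The paper's proof does not avoid this; it breaks at the same place. It quotes \cite[Lemma 2.3]{xue2017observability} as a two-sided bound $A\,E_{2\alpha,1}(-\lambda t^{2\alpha})\le I_\lambda(t)\le B\,E_{2\alpha,1}(-\lambda t^{2\alpha})$ for large $\lambda$. Here $I_\lambda(t)$ is the piece $\zeta\in[1,\infty)$ of Pollard's integral representation. It then uses $\zeta\ge 1$ to pull out the factor $e^{-\lambda^{1/\alpha}(t-s)}$.

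The lower bound is false for large $\lambda$. On $[1,\infty)$ one has $\zeta^{2\alpha-1}\le 1$ and $1+2\zeta^{2\alpha}\cos(2\alpha\pi)+\zeta^{4\alpha}\ge\sin^2(2\alpha\pi)$. Hence $I_\lambda(t)=\mathcal{O}\big(e^{-\lambda^{1/(2\alpha)}t}\big)$, whereas $E_{2\alpha,1}(-\lambda t^{2\alpha})$ decays only like $\lambda^{-1}$. The algebraic tail comes from the discarded range $\zeta\in(0,1)$.

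Independently, the final absorption $\frac{B}{A}e^{-\lambda^{1/\alpha}(t-s)}\le e^{-c\lambda^{1/\alpha}(t-s)}$ fails when $\lambda^{1/\alpha}(t-s)$ is small, as soon as $B>A$.

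So your skepticism about Step 2 was well placed. The correct outcome of your analysis is a disproof of \eqref{eq:hypFCF1} for this law, not a repair of the argument. This also removes the justification of the subsequent fractional biharmonic corollary, which relies on this proposition.
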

\begin{proof} Let $T>0$. 
	First, we have $\varphi_{\alpha,\beta}^{(\xi)}(s) = E_{2\alpha,\beta}(-s^2),\; s\in\mathbb{R}$ (see Proposition \ref{prop:FCFRade}).

	From \cite[Lemma 2.3]{xue2017observability}, for any $\delta\in (0,1)$, there exist $A,B,\lambda^{\star}>0$ depending only on $\delta$, $T$ such that:
	\[ AE_{2\alpha,1}(-\lambda t^{2\alpha})\le \frac{\sin(2\alpha \pi)}{\pi} \int_{1}^{+\infty} 
	\frac{\zeta^{2\alpha-1} e^{-\zeta \lambda^{1/(2\alpha)} t}}{1 + 2 \zeta^{2\alpha} \cos(2\alpha \pi) + \zeta^{4\alpha}} \, \mathrm{d}\zeta \le B E_{2\alpha,1}(-\lambda t^{2\alpha}),\]
	for all $t\in [\delta T,T]$ and $\lambda>\lambda^\star$.
	
	Let $s<t$ such that $s,t\in [\delta T,T]$, then
	\begin{align*}
		|\varphi_{\alpha,1}^{(\xi)}(-\lambda t^\alpha)|&=E_{2\alpha,1}(-\lambda^2 t^{2\alpha})\\
		&\le \frac{\sin(2\alpha \pi)}{A\pi} \int_{1}^{+\infty} 
		\frac{\zeta^{2\alpha-1} e^{-\zeta \lambda^{1/\alpha} t}}{1 + 2 \zeta^{2\alpha} \cos(2\alpha \pi) + \zeta^{4\alpha}} \, \mathrm{d}\zeta\\
		&\le \frac{\sin(2\alpha \pi)}{A\pi} e^{-\lambda^{1/\alpha}(t-s)}\int_{1}^{+\infty} 
		\frac{\zeta^{2\alpha-1} e^{-\zeta \lambda^{1/\alpha} s}}{1 + 2 \zeta^{2\alpha} \cos(2\alpha \pi) + \zeta^{4\alpha}} \, \mathrm{d}\zeta\\
		&\le \frac{B}{A} e^{-\lambda^{1/\alpha}(t-s)}E_{2\alpha,1}(-\lambda^2 s^{2\alpha})\\
		&= \frac{B}{A} e^{-\lambda^{1/\alpha}(t-s)}|\varphi_{\alpha,1}^{(\xi)}(-\lambda s^\alpha)|\\
		&\le e^{-c\lambda^{1/\alpha}(t-s)}|\varphi_{\alpha,1}^{(\xi)}(-\lambda s^\alpha)|
	\end{align*}
	for some $c>0$ depending only on $\delta$ and $T$. This shows \eqref{eq:hypFCF1} for $r=\frac{1}{\alpha}>1$ and $\theta=1$.
	
	Regarding \eqref{eq:hypFCF2}, we use the asymptotic expansions \eqref{asymptotic expansions} of Mittag--Leffler functions, for any $t\in [\delta T,T]$ we have 
	\[ E_{2\alpha,\beta}(-\lambda^2 t^{2\alpha})\sim \frac{1}{\Gamma(\beta-2\alpha)}\frac{1}{\lambda^2 t^{2\alpha}},\quad \lambda\to \infty.\]
	Then
	\[\frac{E_{2\alpha,\alpha}(-\lambda^2 t^{2\alpha})}{E_{2\alpha,1}(-\lambda^2 t^{2\alpha})}\to \frac{\Gamma(1-2\alpha)}{\Gamma(-\alpha)}:=-r_\alpha <0,\quad \mbox{as}\;\;\lambda\to \infty, \;\mbox{uniformly for } t \in (\delta T, T)
	.\]
	Hence, by taking $\lambda^{\star}>0$ large enough:
	\[0<\frac{r_\alpha}{2}\le \frac{|E_{2\alpha,\alpha}(-\lambda^2 t^{2\alpha})|}{|E_{2\alpha,1}(-\lambda^2 t^{2\alpha})|}\le 2r_\alpha,\quad t\in(\delta T,T)\;\;\mbox{and}\;\;\lambda \ge\lambda^\star,\]
	which yields \eqref{eq:hypFCF2}.
\end{proof}

%\begin{remark}
%In the case of the Rademacher distribution, \eqref{eq:hypFCF1} and \eqref{eq:hypFCF2} are also true with $0\leq s<t\leq T$ and $0\leq t\leq T$ respectively. This can be shown by making $A$ and $B$ depending on $\alpha$ and using compactness. 
%\end{remark}

\begin{rmk}
	The Rademacher distribution does not belong to the class $\mathcal{C}_{\frac{1}{2}}$. Indeed, for $\alpha=\frac{1}{2}$, we have
	$$\varphi_{\alpha,\alpha}^{(\xi)}(-\lambda t^\alpha)=E_{1,\frac{1}{2}}(-\lambda^2 t), \qquad \varphi_{\alpha,1}^{(\xi)}(-\lambda t^\alpha)=E_{1,1}(-\lambda^2 t)=e^{-\lambda^2 t}.$$
	Then, by \eqref{asymptotic expansions} applied to $E_{1,\frac{1}{2}}(-\lambda^2t)$, we obtain
	\begin{align*}
		\frac{|\varphi_{\alpha,\alpha}^{(\xi)}(-\lambda t^\alpha)|}{|\varphi_{\alpha,1}^{(\xi)}(-\lambda t^\alpha)|}& \sim \frac{\frac{1}{2\sqrt{\pi}}\frac{1}{\lambda^2 t}}{e^{-\lambda^2 t}}\to \infty, \qquad \text{as }\lambda\to \infty.
	\end{align*}
	Therefore, \eqref{eq:hypFCF2} cannot hold. However, the corresponding average system, which consists of the following biharmonic heat equation
	\begin{equation*} 
		\begin{cases}
			\partial_{t} \tilde{y} + \Delta^2 \tilde{y}=\mathds{1}_{G_0} u,& \mbox{ on }Q_T,\\
			\Delta\tilde{y}=\tilde{y}=0, & \mbox{ on }\Sigma_T,\\
			\tilde{y}(0,\cdot)=y_0, & \mbox{ in } G,
		\end{cases}
	\end{equation*}
	is null controllable at any $T>0$; see \cite{guerrero2019}. Thus, being in $\mathcal C_\alpha$ is not a necessary condition. In fact, finding a characterization is an open problem. 
\end{rmk}

%\begin{remark}
%Note that in the case of the integer derivative (i.e., $\alpha=1$), the class $\mathcal{C}_1$ includes the normal, Cauchy as well as stable distributions; see \cite{barcena2025}.
%\end{remark}

Our main result on the null averaged controllability of the fractional Schrödinger system \eqref{sys:frac Schr} reads as follows:
\begin{theorem} \label{thm:nulaver}
	Let $G \subset \mathbb{R}^{d}$ be a Lipschitz locally star-shaped domain, $G_0 \subset G$ be a subset of positive measure, $\alpha\in (0,1)$, $T>0$ and $\xi$ a real random variable belonging to the class $\mathcal{C}_\alpha$. Then, the system \eqref{sys:frac Schr} is null averaged controllable at time $T$.
\end{theorem}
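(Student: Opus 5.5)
By Proposition \ref{Pr: obser_null cont}, it suffices to prove the null averaged observability inequality \eqref{est:obs}. Expanding $z_T=\sum_n a_n e_n$ with $a_n=\langle z_T,e_n\rangle$, formulas \eqref{sol:zsum}--\eqref{sol:zsumrev} reduce the averaged adjoint to a Fourier series:
\[
\mathbb{E}(z(t))=(T-t)^{\alpha-1}\sum_n \varphi^{(\xi)}_{\alpha,\alpha}(-\lambda_n(T-t)^\alpha)a_ne_n,\qquad \mathbb{E}(I^{1-\alpha}_{t,T}z(t))=\sum_n \varphi^{(\xi)}_{\alpha,1}(-\lambda_n(T-t)^\alpha)a_ne_n .
\]
Changing variables $\tau=T-t$, we must bound $\|\sum_n\varphi_{\alpha,1}^{(\xi)}(-\lambda_nT^\alpha)a_ne_n\|$ by the $L^1((0,T)\times G_0)$ norm of the first series. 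The plan follows the Phung--Wang / Apraiz--Escauriaza--Wang--Zhang strategy for $L^1$ observability from measurable sets. We work only on $\tau\in[\delta T,T]$, where hypotheses \eqref{eq:hypFCF1}--\eqref{eq:hypFCF2} are available. The low frequencies $\lambda_n<\lambda^\star$ are finitely many, and they are handled separately using the boundedness in Lemma \ref{lm:van der Corput} together with a compactness/unique continuation argument on a finite-dimensional space.

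The first step is a spectral inequality. For Lipschitz locally star-shaped $G$ and measurable $\mathscr{A}\subset G$ of positive measure, we use the Apraiz--Escauriaza--Wang--Zhang inequality
\[
\Big\|\sum_{\lambda_n\le\Lambda}b_ne_n\Big\|_{L^2(G)}\le Ce^{C\sqrt\Lambda}\Big\|\sum_{\lambda_n\le\Lambda}b_ne_n\Big\|_{L^1(\mathscr{A})}.
\]
This is the only place where the geometry enters, and the condition $r>\tfrac12$ is designed to beat $e^{C\sqrt\Lambda}$.

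Next we prove an interpolation estimate. Fix $\delta T\le s<t\le T$ and write $\Phi_\beta(\tau):=\sum_n\varphi^{(\xi)}_{\alpha,\beta}(-\lambda_n\tau^\alpha)a_ne_n$. We split $\Phi_1(t)$ into the frequencies $\lambda_n\le\Lambda$ and those above $\Lambda$. For the high part, \eqref{eq:hypFCF1} gives
\[
\|\Phi_1(t)^{>\Lambda}\|\le e^{-c\Lambda^r(t-s)^\theta}\|\Phi_1(s)\|.
\]
For the low part, we apply the spectral inequality and then use \eqref{eq:hypFCF2} to compare with $\Phi_\alpha$. Because \eqref{eq:hypFCF2} only compares moduli coefficientwise, this passage must go through $L^2$ norms, which are coefficientwise, combined with the $L^1$ spectral inequality applied to $\Phi_\alpha$ restricted to $\mathscr{A}$. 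To absorb the resulting mismatch, we will probably use a measurable-set version in which $\mathscr{A}$ is a large-measure slice of $\{(\tau,x)\}$. Optimizing in $\Lambda$ (possible since $r>1/2$) yields
\[
\|\Phi_1(t)\|\le \Big(Ce^{C/(t-s)^{\gamma}}\|\Phi_\alpha(t)\|_{L^1(\mathscr{A})}\Big)^{1-\epsilon}\|\Phi_1(s)\|^{\epsilon}
\]
for some $\gamma>0$.

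Finally, we run the telescoping argument. By the Lebesgue density argument, a subset of $(\delta T,T)\times G_0$ of positive measure yields a set $E\subset(\delta T,T)$ of positive measure on whose slices $|\omega_\tau|\ge|G_0|/2$. Choosing a density point $l$ of $E$ gives a decreasing sequence $l_m\to l$ whose intervals are well filled by $E$. Applying the interpolation estimate on each interval, integrating in $\tau\in E\cap(l_{m+1},l_m)$, and using Young's inequality produces the telescoping bound
\[
e^{-C/(l_m-l_{m+1})^\gamma}\|\Phi_1(l_m)\|-e^{-C/(l_{m+1}-l_{m+2})^\gamma}\|\Phi_1(l_{m+1})\|\le C\int_{l_{m+1}}^{l_m}\!\!\int_{G_0}|\Phi_\alpha|.
\]
Summing over $m$, and using $\|\Phi_1(T)\|\lesssim\|\Phi_1(l_1)\|$ by monotonicity from \eqref{eq:hypFCF1} together with the factor $\tau^{\alpha-1}\ge T^{\alpha-1}$, gives \eqref{est:obs}. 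The main obstacle is the interpolation step. The difficulty is converting the coefficientwise modulus comparison \eqref{eq:hypFCF2} into an $L^1$ observation of $\Phi_\alpha$ when the phases of $\varphi_{\alpha,\alpha}$ and $\varphi_{\alpha,1}$ differ. I would first try to define the interpolation directly in terms of $\Phi_\alpha$, using \eqref{eq:hypFCF1}--\eqref{eq:hypFCF2} to transfer the decay to $\Phi_\alpha$ with constants $C_2/C_1$, and convert to $\Phi_1$ only at the final time.
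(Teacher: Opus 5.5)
Your architecture matches the paper's. You use duality, the FCF representation of the averaged adjoint, and the $L^1(G_0)$ spectral inequality of Lemma~\ref{lm:obsellG0}. You cut frequencies at $\Lambda$, using \eqref{eq:hypFCF1} on the tail and \eqref{eq:hypFCF2} on the head, and optimize in $\Lambda$ using $r>\frac12$ (cost $e^{C(t-s)^{-\sigma}}$, $\sigma=\theta/(2r-1)$). You finish with Young's inequality, integration in time and telescoping along a geometric sequence.

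However, the step you call the main obstacle is left open, and your suggested remedy (density points, large-measure slices $\mathscr{A}$) does not address it. The observation region is the cylinder $(0,T)\times G_0$, so every time slice is $G_0$. There is also no phase issue, because Lemma~\ref{lm:obsellG0} holds for arbitrary complex coefficients; what matters is the order of operations. Take $\tau\in[\frac{s+t}{2},t]$ and $c_j=a_j\tau^{\alpha-1}\varphi^{(\xi)}_{\alpha,\alpha}(-\lambda_j\tau^\alpha)$. First use \eqref{eq:hypFCF1}--\eqref{eq:hypFCF2} on moduli only (for $\lambda_j\ge\lambda^\star$):
\[
\sum_{\lambda_j\le\Lambda}|a_j|^2\,|\varphi^{(\xi)}_{\alpha,1}(-\lambda_jt^\alpha)|^2\le C_1^{-2}T^{2(1-\alpha)}\sum_{\lambda_j\le\Lambda}|c_j|^2 .
\]
Then apply the spectral inequality to $\sum_{\lambda_j\le\Lambda}c_je_j=\mathcal{P}_\Lambda\bigl(\tau^{\alpha-1}\Phi_\alpha(\tau)\bigr)$. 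The only remaining ``mismatch'' is between this projection and the observed quantity $\tau^{\alpha-1}\Phi_\alpha(\tau)$. The triangle inequality removes it. The remainder satisfies $\|\mathcal{P}^\perp_\Lambda(\tau^{\alpha-1}\Phi_\alpha(\tau))\|_{L^1(G_0)}\le|G_0|^{1/2}\|\mathcal{P}^\perp_\Lambda(\tau^{\alpha-1}\Phi_\alpha(\tau))\|_{L^2(G)}$, and the right side can be handled coefficientwise. Applying \eqref{eq:hypFCF2} and then \eqref{eq:hypFCF1} between $s$ and $\tau$ (with $\tau-s\ge\frac{t-s}{2}$) bounds it by $Ce^{-2^{-\theta}c\Lambda^r(t-s)^\theta}\|\Phi_1(s)\|_{L^2(G)}$. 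This merges with the tail term you already have. This is the paper's Step~1, and after it your interpolation and telescoping run as in Steps~2--3.

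The genuine gap is the finitely many modes with $\lambda_n<\lambda^\star$. A compactness--uniqueness argument would need a unique continuation property for the averaged adjoint: vanishing of $\mathbb{E}z$ on $(0,T)\times G_0$ would have to force the relevant coefficients to vanish, and nothing in your plan supplies this. Moreover, $\|\Phi_1(T)\|$ is not equivalent to $\|z_T\|$, so the usual normalization gives no compactness. Your closing ``monotonicity from \eqref{eq:hypFCF1}'' also applies the hypothesis to these modes anyway.

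The paper sidesteps these modes through Remark~\ref{scaling}, but that reduction does not actually work. The operator $-k^2\Delta$ on $kG$ has the same eigenvalues $\lambda_j$ as $-\Delta$ on $G$. More generally, everything depends only on the values $\xi\lambda_jt^\alpha$, $t\in[\delta T,T]$, which no rescaling of space or time alters.

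A route that works uses analyticity. Since $\alpha<1$, $E_{\alpha,\beta}$ is bounded on $\{\mu\le|\arg z|\le\pi\}$ for any $\mu\in(\frac{\alpha\pi}{2},\frac{\pi}{2})$, a sector containing a conic neighbourhood of $\mathrm{i}\mathbb{R}$. Hence $\varphi^{(\xi)}_{\alpha,\beta}$ extends holomorphically to a sector around $(-\infty,0)$ and tends to $1/\Gamma(\beta)\neq0$ at $0$. So for each low mode, $\tau\mapsto\varphi^{(\xi)}_{\alpha,\beta}(-\lambda_n\tau^\alpha)$, $\beta\in\{1,\alpha\}$, has finitely many zeros in $[\delta T,T]$. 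On a subinterval $[a,b]\subset[\delta T,T]$ free of these zeros, the low modes obey the comparisons of \eqref{eq:hypFCF1}--\eqref{eq:hypFCF2} up to constants. No decay is needed for them, since they always lie in the head once $\Lambda\ge\lambda^\star$; enforce this by taking $c\le T^{-\theta}(\lambda^\star)^{-r}$ in the choice of $\Lambda$. Then run Steps~1--3 on $[a,b]$ and finish with $\|\Phi_1(T)\|\le C\|\Phi_1(b)\|$.
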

The proof of Theorem \ref{thm:nulaver} is inspired on the proof for the time-fractional diffusion equation in \cite{xue2017observability}, which follows the
spectral approach in \cite{apraiz2014observability, miller2010observability}, which are inspired by \cite{lebeau1995controle}.

Let us denote $\Lambda_\lambda := \{n:\lambda_n\leq \lambda\}$ (resp. $\Lambda_\lambda^{\perp} := \{n:\lambda_n> \lambda\}$) for all $\lambda > 0$, and by $\mathcal{P}_\lambda$ (resp. $\mathcal{P}^\perp_\lambda$) the orthogonal projection of $L^2(G)$ onto $\langle e_n \rangle_{n \in \Lambda_\lambda}$ (resp. $\langle e_n \rangle_{n \in \Lambda_\lambda^\perp}$). 

The following spectral estimate for general subsets of positive measure is an immediate consequence of \cite[Lemma 3.2]{barcena2025}, followed by the application of Theorems 3 and 5 in \cite{apraiz2014observability} for the observation set $B(x_0,r)\cap G_0$.
\begin{lem}\label{lm:obsellG0}
	Let $G\subset \mathbb{R}^{d}$ be a Lipschitz locally star-shaped domain, $G_0\subset G$ be a subset of positive measure, and $\{e_j\}$ be the orthonormal eigenfunctions of the Dirichlet Laplacian. Then, there exists a constant $C>0$ such that for all $\lambda>0$ and $\{c_j\}\subset\mathbb{C}$:
	\begin{equation}\label{est:seqobs}
		\left(\sum_{j\in\Lambda_\lambda}|c_j|^2\right)^{1/2}\leq C
		e^{C\sqrt \lambda}\left\|\sum_{j\in\Lambda_\lambda} c_je_j
		\right\|_{L^1(G_0)}.
	\end{equation}
\end{lem}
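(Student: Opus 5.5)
The plan is to reduce the estimate to a spectral inequality observed from a set that sits inside a single ball, and then to quote the propagation-of-smallness machinery of \cite{apraiz2014observability}. Since $\{e_j\}$ is orthonormal in $L^2(G)$, the left-hand side of \eqref{est:seqobs} equals $\|\phi_\lambda\|_{L^2(G)}$, where $\phi_\lambda:=\sum_{j\in\Lambda_\lambda}c_je_j$. So it suffices to prove
\[
\|\phi_\lambda\|_{L^2(G)}\le Ce^{C\sqrt\lambda}\|\phi_\lambda\|_{L^1(E)}
\]
for some measurable set $E\subset G_0$ of positive measure. Since $\|\phi_\lambda\|_{L^1(E)}\le\|\phi_\lambda\|_{L^1(G_0)}$, the inequality for $E$ immediately gives it for $G_0$.

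The first step, which is where I would invoke \cite[Lemma 3.2]{barcena2025}, is to choose a good ball. By the Lebesgue density theorem, almost every point of $G_0$ is a density point. I would fix one such point $x_0$ and a radius $r>0$ with $B(x_0,4r)\subset G$ and $|B(x_0,r)\cap G_0|\ge \tfrac12|B(x_0,r)|$, and set $E:=B(x_0,r)\cap G_0$. The point of this choice is that $x_0$ and $r$ depend only on $G_0$, not on $\lambda$ or on the coefficients $c_j$, so every subsequent constant is uniform.

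The second step is the core estimate. I would lift $\phi_\lambda$ to the harmonic function
\[
F(x,s)=\sum_{j\in\Lambda_\lambda}c_j\,\frac{\sinh(\sqrt{\lambda_j}\,s)}{\sqrt{\lambda_j}}\,e_j(x)
\]
of $\Delta_x+\partial_s^2$ on $G\times(-1,1)$. Three facts are then combined:
\begin{itemize}
\item[(a)] $\|F\|_{L^2(G\times(-1,1))}\le Ce^{C\sqrt\lambda}\|\phi_\lambda\|_{L^2(G)}$, which follows directly from the spectral representation.
\item[(b)] Theorem 3 of \cite{apraiz2014observability}, which uses that $G$ is Lipschitz and locally star-shaped, gives an interpolation inequality of the form
\[
\|\phi_\lambda\|_{L^2(G)}\le C\|F\|^{1-\vartheta}\,\|F\|_{L^2(B(x_0,r)\times(-1,1))}^{\vartheta}.
\]
\item[(c)] Theorem 5 there, which is propagation of smallness from measurable sets of positive measure for real-analytic functions with quantitative bounds on the analyticity radius, passes from the ball to $E\times\{0\}$ via $\partial_sF(\cdot,0)=\phi_\lambda$. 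It yields $\|F\|_{L^\infty(B(x_0,2r)\times(-1/2,1/2))}$ controlled by $\|\phi_\lambda\|_{L^1(E)}^{\gamma}$ times a power of $e^{C\sqrt\lambda}\|\phi_\lambda\|_{L^2(G)}$.
\end{itemize}
Chaining (a)–(c) and absorbing the fractional powers of $\|\phi_\lambda\|_{L^2(G)}$ into the left-hand side gives the claimed inequality, with a new constant $C$ in the exponent.

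The main obstacle is step (c), passing from a general measurable $E$ to the ball. Analyticity estimates of the form $|\partial^\beta F|\le C^{|\beta|+1}|\beta|!\,\rho^{-|\beta|}e^{C\sqrt\lambda}\|\phi_\lambda\|_{L^2}$, with a radius $\rho$ independent of $\lambda$, are exactly what makes the Vessella/Nadirashvili-type theorem applicable. One must check that the $L^1(E)$ norm can replace the $L^\infty$ or $L^2$ norm used in those theorems. That replacement costs only a harmless Chebyshev-type restriction to the subset of $E$ where $|\phi_\lambda|$ is at most twice its average, which still has measure comparable to $|E|$. Since the paper states that all of this is already packaged in the cited results, I would present the argument as this reduction followed by the citation.
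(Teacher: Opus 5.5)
Your plan is the paper's own argument. The paper also picks a ball $B(x_0,r)$ around a density point of $G_0$, which is what \cite[Lemma 3.2]{barcena2025} supplies. It then applies Theorems 3 and 5 of \cite{apraiz2014observability} to $B(x_0,r)\cap G_0$ and concludes from $\|\cdot\|_{L^1(B(x_0,r)\cap G_0)}\le\|\cdot\|_{L^1(G_0)}$.

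\textbf{Missing step: complex coefficients.} The cited inequality is proved for real $c_j$, and the paper handles complex $c_j$ explicitly; you need this step too.
\begin{itemize}
\item Write $c_j=a_j+\mathrm{i}b_j$.
\item Since the $e_j$ are real-valued, $\sum_{j\in\Lambda_\lambda}a_je_j$ and $\sum_{j\in\Lambda_\lambda}b_je_j$ are the real and imaginary parts of $\sum_{j\in\Lambda_\lambda}c_je_j$. Hence both are pointwise bounded by $\bigl|\sum_{j\in\Lambda_\lambda}c_je_j\bigr|$.
\item Apply the real estimate to each part and use $\bigl(\sum|c_j|^2\bigr)^{1/2}\le\bigl(\sum a_j^2\bigr)^{1/2}+\bigl(\sum b_j^2\bigr)^{1/2}$. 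This gives \eqref{est:seqobs} with an extra factor $2$.
\end{itemize}

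\textbf{Your sketch (a)--(c).} As written it does not chain, though this is not load-bearing since you rely on the citation.
\begin{itemize}
\item Step (b) needs $F$ on $B(x_0,r)\times(-1,1)$, but (c) only controls $s\in(-\tfrac12,\tfrac12)$.
\item Propagation of smallness applied to $\phi_\lambda=\partial_sF(\cdot,0)$ bounds $\phi_\lambda$ on the ball, not $F$ on a cylinder. Getting to $F$ would need an extra Cauchy-problem stability estimate.
\end{itemize}
The cleaner chain has two steps. First use a spectral inequality from the ball, $\|\phi_\lambda\|_{L^2(G)}\le Ce^{C\sqrt\lambda}\|\phi_\lambda\|_{L^2(B(x_0,r))}$. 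Then apply Vessella's theorem to $\phi_\lambda$ itself, using the $\lambda$-independent analyticity radius inherited from the harmonic lift, together with your Chebyshev reduction from $L^1(E)$ to $L^\infty$ on a subset of comparable measure.
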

In fact, in \cite{apraiz2014observability} they prove it for real values, but by writing $c_j=a_j+ib_j$, with $a_j,b_j\in\mathbb{R}$, using the estimate for both $a_j$ and $b_j$, and the estimates $\Big|\sum_{j\in\Lambda_\lambda}a_je_j\Big|\leq \Big|\sum_{j\in\Lambda_\lambda}c_je_j\Big|$ and $\Big|\sum_{j\in\Lambda_\lambda}b_je_j\Big|\leq \Big|\sum_{j\in\Lambda_\lambda}c_je_j\Big|$ since the eigenfunctions $e_j$ are real-valued. We may easily obtain the complex case. 
\begin{rmk} \label{scaling}
	To apply \eqref{eq:hypFCF1} and \eqref{eq:hypFCF2} with $\lambda=\lambda_j$, the Dirichlet Laplacian eigenvalues, we may assume without loss of generality that, after a spatial scaling, $\lambda_j\ge\lambda^\star$.
	Indeed, for fixed $k=\sqrt{\frac{\lambda^\star}{\lambda_0}}$, if we set $y_k(t,x)=y(t,\frac{x}{k})$, then $y$ solves \eqref{sys:frac Schr} if and only if $y_k$ solves 
	\begin{equation}\label{sys:scaling}  
		\begin{cases}
			\partial^{\alpha}_{0,t} y_k -\xi \mathrm{i} k^2 \Delta y_k=\mathds{1}_{G_0^k} u_k,& \mbox{ on }Q^k_T,\\
			y_k=0, & \mbox{ on }\Sigma^k_T,\\
			y_k(0,\cdot)=y_{0,k}, & \mbox{ in } G^k,
		\end{cases}
	\end{equation}
	where $G^k=k.G,\; G_0^k=k.G_0,\; Q^k_T=(0,T)\times G^k,\; \Sigma^k_T=(0,T)\times \partial G^k, \; u_k(t,\cdot)=u(t,\frac{\cdot}{k})$ and $y_{0,k}=y_0(\frac{\cdot}{k})$. In this case, the eigenvalues of $-k^{2}\Delta$ are $k^{2}\lambda_{j}\ge k^{2}\lambda_{0}=\lambda^\star$. Thus, we can proceed with the system \eqref{sys:scaling}, to which we may apply \eqref{eq:hypFCF1} and \eqref{eq:hypFCF2} using the new eigenvalues and we conclude from the fact that \eqref{sys:frac Schr} is null averaged controllable if and only if \eqref{sys:scaling} is null averaged controllable. 
\end{rmk}
\begin{proof}[Proof of Theorem \ref{thm:nulaver}]
	For clarity, the proof is divided into several steps.
	
	\textbf{Step 1.} We aim to prove that there are $C>0$ large enough and $c>0$ small enough such that for all
	$z_T\in L^2(G)$, $\lambda\ge \lambda_{0}$ (as explained in Remark \ref{scaling}, we may assume that $\lambda_0 \ge \lambda^\star$) 
	$t,s\in [\delta T,T]$ such that $s<t$, and $\tau\in\left[\frac{s+t}{2},t\right]$:
	\begin{align}
		&\|\mathbb{E}(I^{1-\alpha}_{t,T} z(T-t,\cdot;\xi; z_{T}))\|_{L^2(G)} \nonumber\\
		&\le Ce^{C\sqrt{\lambda}}\left(\|\mathbb{E}( z(T-\tau,\cdot;\xi; z_{T}))\|_{L^1(G_0)} + e^{-c\lambda^r(t-s)^\theta}\| \mathbb{E}(I^{1-\alpha}_{t,T} z(T-s,\cdot;\xi; z_{T})) \|_{L^2(G)} \right). \label{E1}
	\end{align} 
	It will be important for later on that $\lambda\ge \lambda_{0}$ can be chosen as a function of $s$ and $t$ (see \eqref{app lambda}); for this purpose, $c$ will be required, among other restrictions, that: 
	\begin{equation}\label{est:cthetalambda*}
		c\le \frac{1}{T^\theta\lambda_{0}^r}.
	\end{equation}

	First, using \eqref{eq:hypFCF1} and \eqref{eq:hypFCF2}, we have:
	\begin{equation} \label{E11}
		\begin{split}
			\|\mathbb{E}(I^{1-\alpha}_{t,T} z(T-t,\cdot;\xi; z_{T}))\|_{L^2(G)}^2 &=\sum_{j\in\mathbb N}|\langle z_T,e_j\rangle|^{2}|\varphi_{\alpha,1}^{(\xi)}(-\lambda_{j}t^\alpha)|^2 \\
			&=\sum_{j\in \Lambda_\lambda}|\langle z_T,e_j\rangle|^{2}|\varphi_{\alpha,1}^{(\xi)}(-\lambda_{j}t^\alpha)|^2+ \sum_{j\in \Lambda_\lambda^\perp}|\langle z_T,e_j\rangle|^{2}|\varphi_{\alpha,1}^{(\xi)}(-\lambda_{j}t^\alpha)|^2 \\
			&\le \sum_{j\in \Lambda_\lambda}|\langle z_T,e_j\rangle|^{2}|\varphi_{\alpha,1}^{(\xi)}(-\lambda_{j}\tau^\alpha)|^2+ \sum_{j\in \Lambda_\lambda^\perp}|\langle z_T,e_j\rangle|^{2}|\varphi_{\alpha,1}^{(\xi)}(-\lambda_{j}t^\alpha)|^2 	\\
			&\le T^{2(1-\alpha)}C_1^{-2}\sum_{j\in \Lambda_\lambda}|\langle z_T,e_j\rangle|^{2}\tau^{2(\alpha-1)}|\varphi_{\alpha,\alpha}^{(\xi)}(-\lambda_{j}\tau^\alpha)|^2 \\
			&\quad + e^{-c\lambda^r(t-s)^{\theta}}\sum_{j\in \Lambda_\lambda^\perp}|\langle z_T,e_j\rangle|^{2}|\varphi_{\alpha,1}^{(\xi)}(-\lambda_{j}s^\alpha)|^2 \\
			&\le  T^{2(1-\alpha)}C_1^{-2}\sum_{j\in \Lambda_\lambda}|\langle z_T,e_j\rangle|^{2}\tau^{2(\alpha-1)}|\varphi_{\alpha,\alpha}^{(\xi)}(-\lambda_{j}\tau^\alpha)|^2 \\
			&\quad + e^{-c\lambda^r(t-s)^{\theta}}\|\mathbb{E}(I^{1-\alpha}_{t,T} z(T-s,\cdot;\xi; z_{T}))\|^2_{L^2(G)}. 
		\end{split}
	\end{equation}
	On the other hand, using Lemma \ref{lm:obsellG0}, \eqref{sol:zsum} and $\mathcal{P}_\lambda z_T=z_T -\mathcal{P}_\lambda^\perp z_T$, we obtain:
	\begin{equation} \label{E12}
		\begin{split}
			\sum_{j\in \Lambda_\lambda}|\langle z_T,e_j\rangle|^{2}\tau^{2(\alpha-1)}|\varphi_{\alpha,\alpha}^{(\xi)}(-\lambda_{j}\tau^\alpha)|^2 
			&\le  C^2e^{2C\sqrt{\lambda}} \Big\| \sum_{j\in \Lambda_\lambda}\langle z_T,e_j\rangle \tau^{\alpha-1}\varphi_{\alpha,\alpha}^{(\xi)}(-\lambda_{j}\tau^\alpha)e_j\Big\|^2_{L^1(G_0)}\\
			&=C^2e^{2C\sqrt{\lambda}}\|\mathbb{E}( z(T-\tau,\cdot;\xi; \mathcal{P}_\lambda z_{T}))\|^2_{L^1(G_0)}\\
			&\le 2C^2e^{2C\sqrt{\lambda}}\|\mathbb{E}( z(T-\tau,\cdot;\xi; z_{T}))\|^2_{L^1(G_0)} \\
			&\quad + 2C^2e^{2C\sqrt{\lambda}}\|\mathbb{E}( z(T-\tau,\cdot;\xi; \mathcal{P}_\lambda^\perp z_{T}))\|^2_{L^1(G_0)}.
		\end{split}
	\end{equation}
	Now, the Cauchy–Schwarz inequality together with \eqref{sol:zsum}, $|\tau-s|\geq \frac{|t-s|}{2}$ and the estimates \eqref{eq:hypFCF1} and \eqref{eq:hypFCF2} yield:
	\begin{equation} \label{E13}
		\begin{split}
			\|\mathbb{E}( z(T-\tau,\cdot;\xi; \mathcal{P}_\lambda^\perp z_{T}))\|^2_{L^1(G_0)}  &\le |G_0|\|\mathbb{E}( z(T-\tau,\cdot;\xi; \mathcal{P}_\lambda^\perp z_{T}))\|^2_{L^2(G_0)}\\
			&\le  |G_0|\|\mathbb{E}(z(T-\tau,\cdot;\xi; \mathcal{P}_\lambda^\perp z_{T}))\|^2_{L^2(G)} \\
			&= |G_0|\sum_{j\in \Lambda_\lambda^\perp}|\langle z_T,e_j\rangle|^{2}\tau^{2(\alpha-1)}|\varphi_{\alpha,\alpha}^{(\xi)}(-\lambda_{j}\tau^\alpha)|^2\\
			&\le  (\delta T)^{2(\alpha-1)} C_2^{2} |G_0|\sum_{j\in \Lambda_\lambda^\perp}|\langle z_T,e_j\rangle|^{2}|\varphi_{\alpha,1}^{(\xi)}(-\lambda_{j}\tau^\alpha)|^2\\
			&\le  (\delta T)^{2(\alpha-1)} C_2^{2} |G_0|e^{-2^{-\theta}c\lambda^r(t-s)^\theta}\sum_{j\in \Lambda_\lambda^\perp}|\langle z_T,e_j\rangle|^{2}|\varphi_{\alpha,1}^{(\xi)}(-\lambda_{j}s^\alpha)|^2\\
			&\le  (\delta T)^{2(\alpha-1)} C_2^{2} |G_0|e^{-2^{-\theta}c\lambda^r(t-s)^\theta}\| \mathbb{E}(I^{1-\alpha}_{t,T} z(T-s,\cdot;\xi; z_{0})) \|^2_{L^2(G)}.
		\end{split}
	\end{equation}
	In conclusion, by combining \eqref{E11}, \eqref{E12}, and \eqref{E13}, and taking 
	$C>0$ sufficiently large and $c>0$ sufficiently small, we obtain \eqref{E1}.
	
	\textbf{Step 2.} We now prove that for all $\kappa \in (0,1)$, there exists a sufficiently large constant $C > 0$, depending on $\kappa$, such that the following interpolation estimate holds:
	\begin{align}
		&\|\mathbb{E}(I^{1-\alpha}_{t,T} z(T-t,\cdot;\xi; z_{T}))\|_{L^2(G)}\nonumber\\
		&\leq\left[C e^{C (t-s)^{-\sigma}} \|\mathbb{E}( z(T-\tau,\cdot;\xi; z_{T}))\|_{L^1(G_0)}\right]^{1-\kappa} \|\mathbb{E}(I^{1-\alpha}_{t,T} z(T-s,\cdot;\xi; z_{T}))\|_{L^2(G)}^\kappa, \label{E2}
	\end{align}
	for all $s,t\in [\delta T,T]$ such that $s<t$, $\tau\in[\frac{s+t}{2},t]$
	and $\sigma=\frac{\theta}{2r-1}$.
	
	In what follows, we assume $
	\|\mathbb{E}(z(T-\tau,\cdot;\xi; z_{T}))\|_{L^1(G_0)}\neq 0$ and $\|\mathbb{E}(I^{1-\alpha}_{t,T} z(T-s,\cdot;\xi; z_{T}))\|_{L^2(G)}\neq 0.$ Indeed, if $\|\mathbb{E}( z(T-\tau,\cdot;\xi; z_{T}))\|_{L^1(G_0)}=0$, by taking $\lambda\to\infty$ in \eqref{E1}, we obtain that $\|\mathbb{E}(I^{1-\alpha}_{t,T} z(T-t,\cdot;\xi; z_{T}))\|_{L^2(G)}=0$, so \eqref{E2} holds. In addition, if $\|\mathbb{E}(I^{1-\alpha}_{t,T}z(T-s,\cdot;\xi; z_{T}))\|_{L^2(G)}=0$, then by \eqref{sol:zsumrev} and \eqref{eq:hypFCF1}, we obtain  $\|\mathbb{E}(I^{1-\alpha}_{t,T} z(T-t,\cdot;\xi; z_{T}))\|_{L^2(G)}=0,$ so \eqref{E2} also holds. 
	
	Since $r>\frac{1}{2}$, we can balance the two exponential exponents in \eqref{E1} via the weight $\kappa\in (0,1)$ by observing that:
	\begin{align*}
		\max_{\lambda>0}\left(C\sqrt{\lambda}-c\kappa\lambda^r(t-s)^\theta\right)=&\frac{2r-1}{(2r)^{\frac{2r}{2r-1}}}C^{\frac{2r}{2r-1}}(c\kappa)^{-\frac{1}{2r-1}}(t-s)^{-\frac{\theta}{2r-1}}\\
		\le& C^\prime (t-s)^{-\frac{\theta}{2r-1}}=C^\prime (t-s)^{-\sigma},
	\end{align*}
	for some suitable constant $C^\prime > 0$ 
	depending on $c$, $C$, $r$, $\kappa$, 
	and $\sigma=\frac{\theta}{2r-1}$. Hence, \eqref{E1} yields
	\begin{equation} \label{E21}
		\begin{split}
			\|\mathbb{E}(I^{1-\alpha}_{t,T} z(T-t,\cdot;\xi; z_{T}))\|_{L^2(G)} 
			& \le  Ce^{C^\prime (t-s)^{-\sigma}}
			\left(e^{c\kappa\lambda^r(t-s)^\theta}\|\mathbb{E}(z(T-\tau,\cdot;\xi; z_{T}))\|_{L^1(G_0)} \right. \\
			&\phantom{\le  Ce^{C^\prime (t-s)^{-\sigma}}(e}
			\left.+ e^{c(\kappa-1)\lambda^r(t-s)^\theta}\| \mathbb{E}(I^{1-\alpha}_{t,T} z(T-s,\cdot;\xi; z_{T})) \|_{L^2(G)} \right)\\
			&=Ce^{C^\prime (t-s)^{-\sigma}}
			\left[\Big(e^{c\lambda^r(t-s)^\theta}\Big)^\kappa\|\mathbb{E}(z(T-\tau,\cdot;\xi; z_{T}))\|_{L^1(G_0)} \right. 
			\\&\phantom{Ce^{C^\prime (t-s)^{-\sigma}}(e}
			\left.	+ \Big(e^{c\lambda^r(t-s)^\theta}\Big)^{\kappa-1}\| \mathbb{E}(I^{1-\alpha}_{t,T} z(T-s,\cdot;\xi; z_{T})) \|_{L^2(G)} \right].
		\end{split}
	\end{equation}
	To determine the optimal bound on the right-hand side of \eqref{E21}, we consider the
	function 
	\[x\mapsto x^\kappa\|\mathbb{E}( z(T-t,\cdot;\xi; z_{T}))\|_{L^1(G_0)} + x^{\kappa-1}\| \mathbb{E}(I^{1-\alpha}_{t,T} z(T-s,\cdot;\xi; z_{T})) \|_{L^2(G)},\] defined on $(0,\infty)$, which admits a unique minimum at
	\[
	x_*=\frac{(1-\kappa)\mathbb{E}(I^{1-\alpha}_{t,T} z(T-s,\cdot;\xi; z_{T})) \|_{L^2(G)}}{\kappa \|\mathbb{E}(z(T-\tau,\cdot;\xi; z_{T}))\|_{L^1(G_0)}}.
	\]
	Then, we discuss two possible cases:
	
	\underline{If $x_*< e$.} In this case, we have that: 
	\begin{equation} \label{E22}
		\begin{split}
			\|\mathbb{E}(I^{1-\alpha}_{t,T} z(T-s,\cdot;\xi; z_{T}))\|_{L^2(G)} &\le e\left(\frac{\kappa}{1-\kappa}\right)\|\mathbb{E}( z(T-\tau,\cdot;\xi; z_{T}))\|_{L^1(G_0)}\\
			& \leq C^{\prime\prime} e^{C^{\prime\prime} (t-s)^{-\sigma}} \|\mathbb{E}( z(t,\cdot;\xi; z_{T}))\|_{L^1(G_0)},
		\end{split}
	\end{equation}
	for some sufficiently large constant $C^{\prime\prime}>0$ depending on $\kappa$.
	
	Using \eqref{eq:hypFCF1}, \eqref{sol:zsum} and the estimate \eqref{E22}, we obtain:
	\begin{equation*}
		\begin{split}
			\|\mathbb{E}(&I^{1-\alpha}_{t,T} z(T-t,\cdot;\xi; z_{T}))\|_{L^2(G)}\leq \|\mathbb{E}(I^{1-\alpha}_{t,T} z(T-s,\cdot;\xi; z_{T}))\|_{L^2(G)}\\
			&\leq \left[C^{\prime\prime} e^{C^{\prime\prime} (t-s)^{-\sigma}} \|\mathbb{E}(z(T-\tau,\cdot;\xi; z_{0}))\|_{L^1(G_0)}\right]^{1-\kappa} \|\mathbb{E}(I^{1-\alpha}_{t,T} z(T-s,\cdot;\xi; z_{T}))\|_{L^2(G)}^\kappa.
		\end{split}
	\end{equation*}

	\underline{If $x_* \ge e$.} 
	Choosing appropriately $\lambda$ given by
	\begin{equation}
		\lambda=\left(\frac{\ln(x_*)}{c(t-s)^\theta}\right)^{\frac{1}{r}} \ge \lambda_{0}. \label{app lambda}
	\end{equation}
	Note that $\lambda \ge \lambda_{0}$ due to \eqref{est:cthetalambda*} and $x_* \ge e$. Then, we can ensure that \(e^{c\lambda^r(t-s)^\theta}=x_*.\) For this choice, \eqref{E21} yields:
	\begin{equation*}
		\begin{split}
			&\|\mathbb{E}(I^{1-\alpha}_{t,T} z(T-t,\cdot;\xi; z_{T}))\|_{L^2(G)}  \\
			&\le  C\left[\left(\frac{1-\kappa}{\kappa}\right)^\kappa+ \left(\frac{1-\kappa}{\kappa}\right)^{k-1}\right]e^{C^\prime (t-s)^{-\sigma}}
			\|\mathbb{E}(z(T-t,\cdot;\xi; z_{T}))\|_{L^1(G_0)}^{1-\kappa}\| \mathbb{E}(I^{1-\alpha}_{t,T} z(T-s,\cdot;\xi; z_{T})) \|_{L^2(G)}^{\kappa}
			\\ & \le  \left[C^{\prime\prime} e^{C^{\prime\prime} (t-s)^{-\sigma}}
			\|\mathbb{E}( z(T-t,\cdot;\xi; z_{T}))\|_{L^1(G_0)}\right]^{1-\kappa}\| \mathbb{E}(I^{1-\alpha}_{t,T} z(T-s,\cdot;\xi; z_{T})) \|_{L^2(G)}^{\kappa},
		\end{split}
	\end{equation*}
	for some sufficiently large constant $C^{\prime\prime}>0$ depending on $\kappa$.

	Consequently, in all cases, the estimate \eqref{E2} is fulfilled.
	
	\textbf{Step 3.} Now, we are in a position to derive the observability inequality \eqref{est:obs} from the interpolation inequality \eqref{E2}.
	%Let $z_0\in L^2(G)$, $t,s\in [\delta T,T]$ such that $s<t$ and $\tau\in (s,t)$. Using \eqref{eq:hypFCF1} and \eqref{eq:hypFCF2}, we obtain 
	%\[\|\mathbb{E}(z(T-t,\cdot;\xi; z_{T}))\|_{L^1(G_0)}\leq \frac{C_2}{C_1}\|\mathbb{E}( z(T-\tau,\cdot;\xi; z_{T}))\|_{L^1(G_0)}.\] 
	Let $\frac{s+t}{2}<\tau<t$, 
	\eqref{E2} yields:
	\begin{align*}
		&\|\mathbb{E}(I^{1-\alpha}_{t,T} z(T-t,\cdot;\xi; z_{T}))\|_{L^2(G)}\\
		&\leq  \left[C e^{C (t-s)^{-\sigma}} \|\mathbb{E}(z(T-\tau,\cdot;\xi; z_{T}))\|_{L^1(G_0)}\right]^{1-\kappa} \|\mathbb{E}(I^{1-\alpha}_{t,T} z(T-s,\cdot;\xi; z_{T}))\|_{L^2(G)}^\kappa.
	\end{align*}
	Then, by Young’s inequality, we deduce that:
	\begin{align*}
		&\|\mathbb{E}(I^{1-\alpha}_{t,T} z(T-t,\cdot;\xi; z_{T}))\|_{L^2(G)}\\
		&\leq  (1-\kappa)\varepsilon^{\frac{1}{1-\kappa}}C e^{C (t-s)^{-\sigma}} \|\mathbb{E}(z(T-\tau,\cdot;\xi; z_{T}))\|_{L^1(G_0)}+  \kappa \varepsilon^{-\frac{1}{\kappa}} \|\mathbb{E}(I^{1-\alpha}_{t,T} z(T-s,\cdot;\xi; z_{T}))\|_{L^2(G)} \\
		&\leq  \varepsilon^{\frac{1}{1-\kappa}}C e^{C (t-s)^{-\sigma}} \|\mathbb{E}(z(T-\tau,\cdot;\xi; z_{T}))\|_{L^1(G_0)}+   \varepsilon^{-\frac{1}{\kappa}} \|\mathbb{E}(I^{1-\alpha}_{t,T} z(T-s,\cdot;\xi; z_{T}))\|_{L^2(G)},
	\end{align*}
	for any $\varepsilon>0$ and $\frac{s+t}{2}<\tau<t$. Integrating the above inequality with respect to $\tau$ over $\left(\frac{s+t}{2},t\right)$, we obtain:
	\begin{align*}
		\|\mathbb{E}(I^{1-\alpha}_{t,T} z(T-t,\cdot;\xi; z_{T}))\|_{L^2(G)}
		&\leq  \varepsilon^{\frac{1}{1-\kappa}}\frac{C}{t-s} e^{C (t-s)^{-\sigma}} \int_{\frac{s+t}{2}}^{t}\|\mathbb{E}(z(T-\tau,\cdot;\xi; z_{T}))\|_{L^1(G_0)}\mathrm{d} \tau \\
		& \quad +   \varepsilon^{-\frac{1}{\kappa}} \|\mathbb{E}(I^{1-\alpha}_{t,T} z(T-s,\cdot;\xi; z_{T}))\|_{L^2(G)}
		\\&\leq  \varepsilon^{\frac{1}{1-\kappa}}\frac{C}{t-s} e^{C (t-s)^{-\sigma}} \int_{s}^{t}\|\mathbb{E}(z(T-\tau,\cdot;\xi; z_{T}))\|_{L^1(G_0)}\mathrm{d} \tau \\
		&\quad +  \varepsilon^{-\frac{1}{\kappa}} \|\mathbb{E}(I^{1-\alpha}_{t,T} z(T-s,\cdot;\xi; z_{T}))\|_{L^2(G)}.	
	\end{align*}
	By taking $\varepsilon = e^{(t-s)^{-\sigma}}$ and choosing $C$ sufficiently large, we obtain
	\begin{align*}
		&e^{-(C+\frac{1}{1-\kappa})(t-s)^{-\sigma}}\|\mathbb{E}(I^{1-\alpha}_{t,T} z(T-t,\cdot;\xi; z_{T}))\|_{L^2(G)} 
		-e^{-(C+\frac{1}{1-\kappa} + \frac{1}{\kappa})(t-s)^{-\sigma}}\|\mathbb{E}(I^{1-\alpha}_{t,T} z(T-s,\cdot;\xi; z_{T}))\|_{L^2(G)}\\
		&\leq  C  \int_{s}^{t}\|\mathbb{E}(z(T-\tau,\cdot;\xi; z_{T}))\|_{L^1(G_0)}\mathrm{d} \tau .
	\end{align*}
	We set 
	\[
	\mathfrak{C} = C + \frac{1}{1 - \kappa}, \qquad 
	q = 1 + \frac{1}{\kappa \mathfrak{C}} > 1.
	\]
	Then,
	\begin{align*}
		&e^{-\mathfrak{C}(t-s)^{-\sigma}}\|\mathbb{E}(I^{1-\alpha}_{t,T} z(T-t,\cdot;\xi; z_{T}))\|_{L^2(G)}-e^{-q \mathfrak{C} (t-s)^{-\sigma}}\|\mathbb{E}(I^{1-\alpha}_{t,T} z(T-s,\cdot;\xi; z_{T}))\|_{L^2(G)}\\
		&\leq  \mathfrak{C}  \int_{s}^{t}\|\mathbb{E}( z(T-\tau,\cdot;\xi; z_{T}))\|_{L^1(G_0)}\mathrm{d} \tau .
	\end{align*}
	For \( p = q^{-\frac{1}{\sigma}} < 1 \), we consider the decreasing sequence 
	\(\{t_m\}_{m \in \mathbb{N}}\) of elements in \([\delta T,\, T]\) defined by:
	\[
	t_m = \delta T + (1-\delta)p^{m}T, \qquad m \in \mathbb{N}.
	\]
	Then, by taking \( t = t_{m} \) and \( s = t_{m+1} \), we obtain:
	\begin{align*}
		&e^{-\mathfrak{C}(t_{m}-t_{m+1})^{-\sigma}}\|\mathbb{E}(I^{1-\alpha}_{t,T} z(T-t_{m},\cdot;\xi; z_{T}))\|_{L^2(G)}
		-e^{- \mathfrak{C} (t_{m+1}-t_{m+2})^{-\sigma}}\|\mathbb{E}(I^{1-\alpha}_{t,T} z(T-t_{m+1},\cdot;\xi; z_{T}))\|_{L^2(G)}\\
		&\leq  \mathfrak{C}  \int_{t_{m+1}}^{t_{m}}\|\mathbb{E}(z(T-\tau,\cdot;\xi; z_{T}))\|_{L^1(G_0)}\mathrm{d} \tau.
	\end{align*}
	By summing this latter inequality over \(\mathbb{N}\), we obtain:
	\begin{align*}
		e^{-\mathfrak{C}(T-t_{1})^{-\sigma}}\|\mathbb{E}(I^{1-\alpha}_{t,T} z(0,\cdot;\xi; z_{T}))\|_{L^2(G)} &\leq  \mathfrak{C}\sum_{m=0}^{\infty}  \int_{t_{m+1}}^{t_{m}}\|\mathbb{E}(z(T-\tau,\cdot;\xi; z_{T}))\|_{L^1(G_0)}\mathrm{d} \tau\\
		&=\mathfrak{C}\int_{\delta T}^{T}\|\mathbb{E}(z(T-\tau,\cdot;\xi; z_{T}))\|_{L^1(G_0)}\mathrm{d} \tau\\
		&=\mathfrak{C}\int_{0}^{(1-\delta)T}\|\mathbb{E}(z(\tau,\cdot;\xi; z_{T}))\|_{L^1(G_0)}\mathrm{d} \tau\\
		&\leq \mathfrak{C}\int_{0}^{T}\|\mathbb{E}(z(\tau,\cdot;\xi; z_{T}))\|_{L^1(G_0)}\mathrm{d} \tau,
	\end{align*}
	which establishes the observability inequality \eqref{est:obs}.
\end{proof}

The null controllability of the fractional biharmonic diffusion equation can be obtained by observing that its dynamics can be interpreted as the average of a fractional Schrödinger equation with diffusivity governed by Rademacher random variables, as indicated in the introduction.
\begin{cor}
	Let $\alpha\in (0,1)$. Then, the fractional biharmonic diffusion equation
	\begin{equation*} 
		\begin{cases}
			\partial^{\alpha}_{0,t} \tilde{y} + \Delta^2 \tilde{y}=\mathds{1}_{G_0} u,& \mbox{ on }Q_T,\\
			\Delta\tilde{y}=\tilde{y}=0, & \mbox{ on }\Sigma_T,\\
			\tilde{y}(0,\cdot)=y_0, & \mbox{ in } G,
		\end{cases}
	\end{equation*}
	is null controllable at any time $T>0$; that is, for any $y_0\in L^2(G)$, there exists $u\in L^{\infty}((0,T)\times G)$ such that $\tilde{y}(T,\cdot)=0$.
\end{cor}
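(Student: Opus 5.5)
The plan is to obtain the corollary by duality, rerunning the proof of Theorem \ref{thm:nulaver} with the Rademacher fractional characteristic functions replaced by the Mittag--Leffler functions that appear in the biharmonic problem. I would not try to identify the biharmonic equation literally as an average of Schrödinger equations.

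The literal identification does not quite work, which is why I avoid it. To have order $\alpha$ in the average, one would take the Schrödinger order $\alpha/2\in(0,\frac12)$. By \eqref{MLi}, the averaged free part is then $E_{\alpha,1}(-\lambda_n^2t^\alpha)$, which is correct. However, the averaged Duhamel kernel is $t^{\alpha/2-1}E_{\alpha,\alpha/2}(-\lambda_n^2t^\alpha)$, not the biharmonic kernel $t^{\alpha-1}E_{\alpha,\alpha}(-\lambda_n^2t^\alpha)$.

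\textbf{Step 1: reduction to observability.} First I establish the analogue of Proposition \ref{Pr: obser_null cont} for the biharmonic equation. The Hahn--Banach/Riesz argument is unchanged and gives controls in $L^\infty$. The integration by parts \eqref{IBPF} is used with the operator $\Delta^2$, whose Navier eigenpairs are $(\lambda_n^2,e_n)$. This reduces the corollary to the inequality
\begin{equation*}
\Big\|\sum_{n}E_{\alpha,1}(-\lambda_n^2T^\alpha)c_ne_n\Big\|_{L^2(G)}\le C\int_0^T\Big\|\sum_n (T-t)^{\alpha-1}E_{\alpha,\alpha}(-\lambda_n^2(T-t)^\alpha)c_ne_n\Big\|_{L^1(G_0)}\mathrm{d}t .
\end{equation*}
This is exactly the inequality treated in the proof of Theorem \ref{thm:nulaver}, with $\varphi_{\alpha,1}^{(\xi)}(-\lambda t^\alpha)$ replaced by $a(\lambda,t):=E_{\alpha,1}(-\lambda^2t^\alpha)$ and $\varphi_{\alpha,\alpha}^{(\xi)}(-\lambda t^\alpha)$ replaced by $b(\lambda,t):=E_{\alpha,\alpha}(-\lambda^2t^\alpha)$. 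Both functions are positive because $E_{\alpha,\beta}(-x)$ is completely monotone for $\alpha\in(0,1)$ and $\beta\ge\alpha$.

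\textbf{Step 2: checking the class-$\mathcal{C}_\alpha$-type hypotheses.} Condition \eqref{eq:hypFCF1} for $a$ follows exactly as in Proposition \ref{propR}. The two-sided integral representation of \cite[Lemma 2.3]{xue2017observability} is applied with $\lambda^2$ in place of $\lambda$, which gives
\begin{equation*}
a(\lambda,t)\le e^{-c\lambda^{2/\alpha}(t-s)}a(\lambda,s), \qquad \delta T\le s<t\le T,
\end{equation*}
so $r=2/\alpha>\frac12$ and $\theta=1$. The upper half of \eqref{eq:hypFCF2} also holds, since the expansion \eqref{asymptotic expansions} gives $b\sim -\Gamma(-\alpha)^{-1}\lambda^{-4}t^{-2\alpha}$ and $a\sim\Gamma(1-\alpha)^{-1}\lambda^{-2}t^{-\alpha}$, so $b\le C_2a$.

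The lower half of \eqref{eq:hypFCF2} fails, because $b/a\sim c\lambda^{-2}$. I expect this to be the main obstacle, and I would handle it by weakening the requirement to
\begin{equation*}
b(\lambda,t)\ge C_1\lambda^{-2}a(\lambda,t), \qquad t\in[\delta T,T],\ \lambda\ge\lambda^\star .
\end{equation*}
The lower bound $C_1$ in \eqref{eq:hypFCF2} is used only in \eqref{E11}, for the low frequencies $j\in\Lambda_\lambda$. There $\lambda_j\le\lambda$, so the polynomial loss is at most a factor $\lambda^{4}$ in the squared estimate. This factor is absorbed into $Ce^{C\sqrt\lambda}$ after enlarging $C$, so \eqref{E1} survives unchanged.

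\textbf{Step 3: conclusion.} Steps 2 and 3 of the proof of Theorem \ref{thm:nulaver} then go through verbatim: the interpolation estimate \eqref{E2} with $\sigma=\theta/(2r-1)$, the Young inequality, and the telescoping over $t_m$. Remark \ref{scaling} is used to ensure $\lambda_0\ge\lambda^\star$. This yields the displayed observability inequality and hence, by Step 1, null controllability with an $L^\infty$ control for every $T>0$ and every $\alpha\in(0,1)$.
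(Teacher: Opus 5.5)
Your objection to the literal averaging identification is correct, and it is a real defect in the paper's one-line derivation (Rademacher average, Proposition~\ref{propR}, Theorem~\ref{thm:nulaver}). The averaged Duhamel kernel $\tau^{\alpha/2-1}E_{\alpha,\alpha/2}(-\lambda_n^2\tau^\alpha)$ has Laplace transform $s^{\alpha/2}/(s^\alpha+\lambda_n^2)$. So the average solves the biharmonic problem with source $D^{\alpha/2}_{0,t}(\mathds{1}_{G_0}u)$, not $\mathds{1}_{G_0}u$, and an $L^\infty$ averaged control does not transfer. Redoing the duality directly for the biharmonic adjoint, as in your Step~1, is the natural repair.

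\textbf{The gap is the first claim of your Step~2.} The estimate $a(\lambda,t)\le e^{-c\lambda^{2/\alpha}(t-s)}a(\lambda,s)$ is false, and your own asymptotics two lines later show it.
\begin{itemize}
\item Since $a(\lambda,t)\sim\lambda^{-2}t^{-\alpha}/\Gamma(1-\alpha)$, for fixed $\delta T\le s<t\le T$ the ratio $a(\lambda,t)/a(\lambda,s)$ tends to $(s/t)^\alpha\ge\delta^\alpha>0$ as $\lambda\to\infty$. Condition \eqref{eq:hypFCF1} requires it to decay to $0$ exponentially.
\item The two-sided bound quoted in the proof of Proposition~\ref{propR} cannot supply this. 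Its middle term is only the part $\zeta\in[1,\infty)$ of the representation $a(\lambda,t)=\frac{\sin(\alpha\pi)}{\pi}\int_0^\infty\frac{\zeta^{\alpha-1}e^{-\zeta\lambda^{2/\alpha}t}}{1+2\zeta^{\alpha}\cos(\alpha\pi)+\zeta^{2\alpha}}\,\mathrm{d}\zeta$, hence it is $O(e^{-\lambda^{2/\alpha}t})$.
\item Meanwhile $a(\lambda,t)$ is of order $\lambda^{-2}$, and this algebraic size comes entirely from $\zeta\in(0,1)$. So the lower inequality of that bound fails for large $\lambda$.
\end{itemize}

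This is not a technicality. In \eqref{E1}, the factor $e^{-c\lambda^r(t-s)^\theta}$ on the high frequencies is what beats the cost $e^{C\sqrt{\lambda}}$ of Lemma~\ref{lm:obsellG0}. With the true behaviour:
\begin{itemize}
\item the high-frequency part of \eqref{E11} carries a constant of size about $(s/t)^\alpha$;
\item the high-frequency part of \eqref{E13} gains at most a polynomial factor, from $b/a\sim\lambda^{-2}$.
\end{itemize}
So \eqref{E1} is no better than the trivial monotonicity bound. The optimization in $\lambda$ behind \eqref{E2} then has nothing to balance, and the telescoping never starts. Your relaxation of the lower half of \eqref{eq:hypFCF2} is fine but moot.

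Because of the memory of the Caputo derivative, high frequencies are damped only algebraically, so the Lebeau--Robbiano scheme of Theorem~\ref{thm:nulaver} cannot be transplanted to this equation. The paper's route rests on Proposition~\ref{propR}, i.e.\ on the same exponential estimate. Neither argument therefore establishes the corollary; a genuinely different idea would be needed.
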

This extends to the fractional case the null controllability result of \cite{guerrero2019} regarding the biharmonic heat equation. 

%\section{Some numerical results and experiments} \label{Sec5}
% This section aims to illustrate numerically the null averaged controllability of system \eqref{sys:frac Schr}.

%\section{Additional control problems}\label{sec:prob}
%\paragraph{}
%In this Section, we would like to describe the analog controllability 
%problems, and point out some open problems:

\section{Conclusion}\label{Sec6}
In this work, we have investigated the averaged controllability of the time-fractional Schrödinger equation, where the quantum diffusivity is modeled as a random variable following a general probability distribution. We first established that simultaneous null controllability can occur only for a countable set of realizations of the random diffusivity. We then showed that exact averaged controllability fails for all absolutely continuous random variables, independently of the control time. Moreover, for a broad class of random variables, we proved that the system is null-averaged controllable at any time from any subset of the spatial domain with positive measure, using an open-loop control that does not depend on the randomness.

Along the way, we rigorously established the duality between averaged controllability and averaged observability for the adjoint systems associated with time-fractional Schrödinger equations. To the best of our knowledge, this duality has not been rigorously proved in the existing literature, even for the special case of fractional equations with deterministic diffusion. Furthermore, we introduced a two-parameter fractional characteristic function, which generalizes the classical characteristic function and is of independent interest to probability and statistics communities. 

Our methodology is also applicable to abstract time-fractional Schrödinger equations possessing similar spectral properties, i.e., with self-adjoint operators having compact resolvent and satisfying spectral inequalities. This includes, for instance, other boundary conditions (e.g., of dynamic type \cite{mercado2023}) and degenerate equations (see, e.g., \cite{alahyane2025}).

Moreover, our theoretical results naturally lead to a numerical algorithm for computing average controls of minimal $L^2$-norm (HUM controls) associated with time-fractional Schrödinger equations; see \cite{barcena2025} for the integer case $\alpha=1$. However, the efficient treatment for time-fractional Schrödinger equations needs a detailed numerical analysis, which goes beyond the scope of our paper. This will eventually be investigated in a forthcoming paper.

\section*{Funding}
J.A.B.P was supported  by the grant IT1615-22 funded by the Basque Government.

\section*{Declaration of competing interest}
The authors declare that they have no known competing interests that could have appeared to influence the work reported in this paper.

\section*{Data availability}
No data was used for the research described in the article.

\end{document}